\theoremstyle{plain}
\newtheorem*{prop}{Proposition}
\newtheorem*{thm}{Theorem}
\newtheorem*{lem}{Lemma}
\newtheorem*{cor}{Corollary}
\newtheorem*{defi}{Definition}
\theoremstyle{remark}
\title{Quantum superintegrable spin systems on graph connections}
\author{Nicolai Reshetikhin}
\address{N.R.: YMSC, Tsinghua University, Beijing, China \& BIMSA, Beijing, China \& 
Department of Mathematics, University of California, Berkeley,
CA 94720, USA.}
\email{reshetik@math.berkeley.edu}
\author{Jasper Stokman}
\address{J.S.: KdV Institute for Mathematics, University of Amsterdam,
Science Park 105-107, 1098 XG Amsterdam, The Netherlands.}
\email{J.V.Stokman@uva.nl }
\begin{document}
\keywords{}
\maketitle
\begin{abstract}
In this paper we construct certain quantum spin systems on moduli spaces of $G$-connections on a connected oriented finite graph, with $G$ a simply connected compact Lie group. We construct joint eigenfunctions of the commuting quantum Hamiltonians in terms of local invariant tensors. We determine sufficient conditions ensuring superintegrability of the quantum spin system
using irreducibility criteria for Harish-Chandra modules due to Harish-Chandra and Lepowsky \& McCollum.

The resulting class of quantum superintegrable spin systems includes the quantum periodic and open spin Calogero-Moser spin chains as special cases.
In the periodic case the description of the joint eigenfunctions 
in terms of local invariant tensors are multipoint generalised trace functions, in the open case 
 multipoint spherical functions on compact symmetric spaces.
 \end{abstract}
 \vspace{.5cm}
\begin{center}
{\it{Dedicated to the memory of Gerrit van Dijk}}
\end{center}
\vspace{.5cm}

\section{Introduction}
\subsection{}\label{1-1} 

Let $\Gamma$ be a connected oriented finite graph with vertex set $V$, edge set $E$, and source and target maps $s,t: E\rightarrow V$. Let $G$ be a connected compact Lie group. The product group $G^E=\{\bm{g}=(g_e)_{e\in E}\,\, | \,\, g_e\in G\}$ of 
 {\it graph $G$-connections} (or lattice gauge fields) on $\Gamma$ consists of colorings $\bm{g}$ of the edges of $\Gamma$ by group elements $g_e\in G$ ($e\in E$). We view $G^E$ both as compact Lie group and as algebraic group (via Tannaka duality). 
 
 The group 
$G^V=\{\bm{k}=(k_v)_{v\in V}\,\, | \,\, k_v\in G\}$ of lattice gauge transformations acts on $G^E$ by\footnote{With this convention of the action, $g_e\in G$ describes the holonomy along $e$ in the reverse direction.} 
\begin{equation}\label{actionintro}
(\bm{k}\cdot \bm{g})_e=k_{s(e)} g_ek_{t(e)}^{-1}.
\end{equation}
The resulting space $G^E/G^V$ of $G^V$-orbits in $G^E$ is the moduli space of graph $G$-connections on $\Gamma$ introduced by Fock and Rosly \cite{FR} to describe moduli spaces of flat connections on surfaces, see also \cite{AMR, AR}. See \cite{AGS,BR1,BR2,RSz} and references therein for the associated quantization problem.
In this paper we construct quantum systems with Hamiltonians being differential operators on the moduli space $G^E/\mathbf{K}$ of graph $G$-connections modulo  gauge groups $\mathbf{K}$ of the form $\mathbf{K}=\prod_{v\in V}K_v\subseteq G^V$, with $K_v$ arbitrary subgroups of $G$.

\subsection{}\label{1-2}
Let $\mathcal{D}(G^E)$ be the algebra of algebraic differential operators on $G^E$. The contragredient $\mathbf{K}$-action on the space of algebraic functions on $G^E$ 
induces a $\mathbf{K}$-action on $\mathcal{D}(G^E)$ by algebra automorphisms. We denote by $\mathcal{D}(G^E)^{\mathbf{K}}\subseteq\mathcal{D}(G^E)$ the subalgebra
of $\mathbf{K}$-invariant differential operators. In this paper we consider its subalgebras
\begin{equation}\label{inclusionalgintro}
\mathcal{D}_{\textup{biinv}}(G^E)\subseteq\mathcal{D}_{\textup{inv}}(G^E)^{\mathbf{K}}\subseteq\mathcal{D}(G^E)^{\mathbf{K}},
\end{equation}
with $\mathcal{D}_{\textup{inv}}(G^E)\subseteq\mathcal{D}(G^E)$ the subalgebra generated by the left and right $G^E$-invariant differential operators 
and $\mathcal{D}_{\textup{biinv}}(G^E)\subseteq\mathcal{D}(G^E)$ the subalgebra of $G^E$-biinvariant differential operators.

\subsection{}\label{1-3}
Let $\sigma: \mathbf{K}\rightarrow\textup{GL}(S)$ be a finite dimensional $\mathbf{K}$-representation.  
Functions in the space $\mathcal{H}=\mathcal{H}_{\Gamma,G,S}$ of algebraic sections of the associated vector bundle over $G^E/\mathbf{K}$ 
are called {\it spin graph functions}. They are algebraic functions $f: G^E\rightarrow S$ satisfying
\[
f(\bm{k}\cdot\bm{g})=\sigma(\bm{k})f(\bm{g}) 
\]
for $\bm{k}\in\mathbf{K}$ and $\bm{g}\in G^E$. Here spin refers to the interpretation of $S$ as spin space for the associated quantum spin system, see \S\ref{1-4} and  \S\ref{1-9}.

\subsection{}\label{1-4} 
The algebra $\mathcal{D}(G^E)^{\mathbf{K}}$ acts as scalar valued $\mathbf{K}$-invariant differential operators on the space
$\mathcal{H}$ of spin graph functions.
The resulting homomorphic image 
of the inclusions \eqref{inclusionalgintro} of algebras 
gives rise to an inclusion
\begin{equation}\label{inclH}
I\subseteq J\subseteq A
\end{equation}
of subalgebras of $\textup{End}(\mathcal{H})$. We view \eqref{inclH} as a quantum spin system with quantum state space $\mathcal{H}$, and with the homomorphic image 
$A=A_{\Gamma,G,S}\subseteq\textup{End}(\mathcal{H})$ of $\mathcal{D}(G^E)^{\mathbf{K}}$ as algebra of 
 {\it quantum observables},
the homomorphic image $J=J_{\Gamma,G,S}$ of $\mathcal{D}_{\textup{inv}}(G^E)^{\mathbf{K}}$ as algebra of {\it quantum integrals},
and the homomorphic image $I=I_{\Gamma,G,S}$ of $\mathcal{D}_{\textup{biinv}}(G^E)$ as the algebra of {\it quantum Hamiltonians}.

\subsection{}\label{1-5}
The quantum state space $\mathcal{H}$ breaks up in an algebraic direct sum of finite dimensional simultaneous eigenspaces for the action of the quantum Hamiltonians,
\[
\mathcal{H}=\bigoplus_{\chi\in I^\wedge}\mathcal{H}_\chi,
\]
with $I^\wedge$ the set of characters
 of $I$. We say that the quantum spin system is {\it superintegrable} if for all $\chi\in I^\wedge$, the simultaneous eigenspace $\mathcal{H}_\chi$ is either $\{0\}$
or an irreducible $J$-module. Equivalently, the quantum spin system is superintegrability when eigenstates $f,g\in\mathcal{H}_\chi$ with the same energy eigenvalues $\chi\in I^\wedge$ are related by a quantum integral: $g=D(f)$ for some $D\in J$.

We say that the superintegrable quantum spin system is integrable when $I=J$.
In this case $\textup{dim}(\mathcal{H}_{\chi})\leq 1$ for all $\chi\in I^\wedge$, i.e., an eigenstate is determined by its energy eigenvalues up to normalisation.

\subsection{}\label{1-6}
The main result of this paper is as follows.
\begin{thm}
The quantum spin system on $\mathcal{H}=\mathcal{H}_{\Gamma,G,S}$ is superintegrable when the following three conditions are satisfied:
\begin{enumerate}
\item[{\textup{(a)}}] $G$ is simply connected,
\item[{\textup{(b)}}] the local gauge group $K_v$ is a closed connected subgroup of $G$ for all $v\in V$,
\item[{\textup{(c)}}] $\sigma: \mathbf{K}\rightarrow\textup{GL}(S)$ is irreducible. 
\end{enumerate}
\end{thm}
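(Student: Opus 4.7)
My strategy is to diagonalise the Hamiltonian algebra $I$ via the algebraic Peter--Weyl theorem, identify each joint eigenspace with a single $\mathbf{K}$-isotypic multiplicity space, and then show that $J$ acts on it through a full matrix algebra by a Schur--Weyl / Lepowsky--McCollum double commutant argument.

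First, I apply the algebraic Peter--Weyl theorem for $G^E$. Under assumption (a) it gives the $G^E\times G^E$-equivariant decomposition
\[
\mathcal{O}(G^E)\cong\bigoplus_{\boldsymbol{\lambda}}V_{\boldsymbol{\lambda}}^*\otimes V_{\boldsymbol{\lambda}},\qquad V_{\boldsymbol{\lambda}}:=\bigotimes_{e\in E}V_{\lambda_e},
\]
summed over tuples $\boldsymbol{\lambda}=(\lambda_e)_{e\in E}$ of dominant integral weights of $\mathfrak{g}_{\mathbb{C}}$. Tensoring with $S$ and taking $\mathbf{K}$-invariants yields
\[
\mathcal{H}=\bigoplus_{\boldsymbol{\lambda}}\mathcal{H}_{\boldsymbol{\lambda}},\qquad \mathcal{H}_{\boldsymbol{\lambda}}:=\bigl(V_{\boldsymbol{\lambda}}^*\otimes V_{\boldsymbol{\lambda}}\otimes S\bigr)^{\mathbf{K}}.
\]
The biinvariant algebra is the image of $Z(U(\mathfrak{g}_{\mathbb{C}}^E))$, which acts on each block $V_{\boldsymbol{\lambda}}^*\otimes V_{\boldsymbol{\lambda}}$ by the central character $\chi_{\boldsymbol{\lambda}}$; since the Harish--Chandra homomorphism separates distinct dominant tuples, the $\mathcal{H}_{\boldsymbol{\lambda}}$ are exactly the nonzero joint $I$-eigenspaces and it suffices to prove each is an irreducible $J$-module.

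Next, on a single block the left- and right-invariant vector fields provide commuting homomorphisms $U(\mathfrak{g}_{\mathbb{C}}^E)\to\textup{End}(V_{\boldsymbol{\lambda}})$ and $U(\mathfrak{g}_{\mathbb{C}}^E)\to\textup{End}(V_{\boldsymbol{\lambda}}^*)$ that are surjective by Jacobson density on finite-dimensional irreducibles, so $\mathcal{D}_{\textup{inv}}(G^E)\twoheadrightarrow\textup{End}(V_{\boldsymbol{\lambda}}^*\otimes V_{\boldsymbol{\lambda}})$. Assumption (b) makes $\mathbf{K}=\prod_{v\in V}K_v$ compact and linearly reductive, so the $\mathbf{K}$-invariants functor is exact and the surjection descends to
\[
J\twoheadrightarrow\textup{End}\bigl(V_{\boldsymbol{\lambda}}^*\otimes V_{\boldsymbol{\lambda}}\bigr)^{\mathbf{K}}.
\]
Decomposing the finite-dimensional $\mathbf{K}$-module $W:=V_{\boldsymbol{\lambda}}^*\otimes V_{\boldsymbol{\lambda}}\cong\bigoplus_\tau U_\tau\otimes m_\tau$ into $\mathbf{K}$-isotypic components, Artin--Wedderburn gives $\textup{End}(W)^{\mathbf{K}}\cong\bigoplus_\tau\textup{End}(m_\tau)$, while assumption (c) forces $\mathcal{H}_{\boldsymbol{\lambda}}\cong(W\otimes S)^{\mathbf{K}}\cong m_{S^*}$. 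The induced action of $\textup{End}(W)^{\mathbf{K}}$ on $m_{S^*}$ factors through its simple block $\textup{End}(m_{S^*})$, which acts irreducibly, and combined with the preceding surjection this establishes irreducibility of $\mathcal{H}_{\boldsymbol{\lambda}}$ as a $J$-module.

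The step I expect to be most delicate is the surjection $J\twoheadrightarrow\textup{End}(W)^{\mathbf{K}}$: the gauge action of $\mathbf{K}$ couples the left and right regular $G^E$-actions in an edge-dependent way via the source and target maps, so verifying that every $\mathbf{K}$-invariant endomorphism of $W$ is realised by a $\mathbf{K}$-invariant element of $\mathcal{D}_{\textup{inv}}(G^E)$ is where I anticipate the Harish--Chandra and Lepowsky--McCollum irreducibility criteria for Harish--Chandra modules over the pair $(\mathfrak{g}_{\mathbb{C}}^E\oplus\mathfrak{g}_{\mathbb{C}}^E,\mathbf{K})$ to be brought to bear, and where the three hypotheses (a)--(c) combine.
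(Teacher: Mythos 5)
Your proposal is correct in substance, but it proves the theorem by a genuinely different route than the paper. You stay global on each Peter--Weyl block $W=V_{\boldsymbol{\lambda}}^*\otimes V_{\boldsymbol{\lambda}}$: Burnside/Jacobson density gives $\mathcal{D}_{\textup{inv}}(G^E)\twoheadrightarrow\textup{End}(W)$, compactness of $\mathbf{K}$ gives surjectivity after passing to $\mathbf{K}$-invariants, and the isotypic decomposition of $W$ identifies $\mathcal{H}_{\boldsymbol{\lambda}}\cong(W\otimes S)^{\mathbf{K}}$ with the multiplicity space of $S^*$, on which the image of $J$ acts as the full block $\textup{End}(m_{S^*})$. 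The paper instead factorises both $\mathcal{H}^{\bm{\pi}}$ and the algebra $U((\mathfrak{g}^E)^{\times 2})^{\mathbf{K}}$ vertex-locally (the isomorphism $\Upsilon^{\bm{\pi}}$ and Corollary \ref{6-9}), reducing superintegrability to irreducibility of the local multiplicity spaces $\textup{Hom}_{K_v}(S_v^*,\mathbf{M}^{\pi_{\mathcal{S}(v)}})$ over $(U(\mathfrak{g})^{(v)})^{K_v}$, and then invokes Lepowsky--McCollum through the reductive-extension framework of Section \ref{3}; hypotheses (a) and the connectedness in (b) enter precisely there (semisimplicity of $\mathfrak{g}$ to make the diagonal $\mathfrak{k}_v$ reductive in $\mathfrak{g}^{\times e(v)}$, connectedness to replace $K_v$-invariance by $\mathfrak{k}_v$-invariance). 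What your route buys is brevity and, in fact, weaker hypotheses: since every module in sight is finite dimensional, only compactness of $\mathbf{K}$ and irreducibility of $\sigma$ are really used. What the paper's route buys is the vertex-local description of the eigenspaces (of independent interest, cf.\ Theorem \ref{4-15}) and a purely Lie-algebraic argument that would survive in settings where the local modules are infinite dimensional.

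Two remarks on your write-up. First, your closing worry is unfounded: the surjection $J\twoheadrightarrow\textup{End}(W)^{\mathbf{K}}$ does not require Harish-Chandra or Lepowsky--McCollum. The restriction map $\mathcal{D}_{\textup{inv}}(G^E)\to\textup{End}(W)$ is equivariant for the $\bullet$-action of \eqref{KcompD} on the source and conjugation by the gauge action on the target (this is exactly how the edge-dependent coupling through $s,t$ is absorbed, cf.\ Lemma \ref{5-5}), and the $\bullet$-action on $U((\mathfrak{g}^E)^{\times 2})$, hence on $\mathcal{D}_{\textup{inv}}(G^E)$, is locally finite and continuous on the PBW filtration steps; Haar averaging over the compact group $\mathbf{K}$ therefore provides a Reynolds operator, and invariants of the surjection stay surjective. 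This is the one point you must spell out to make the argument self-contained (your appeal to ``linear reductivity'' is the right idea but needs the local finiteness observation). Second, in the present finite-dimensional situation the Lepowsky--McCollum corollary used by the paper reduces to exactly this double-commutant argument, which explains why your elementary approach succeeds and why conditions (a) and the connectedness part of (b) are only needed for the paper's chosen algebraic path, not for the conclusion itself.
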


We will prove this theorem using a result of Harish-Chandra \cite{HC} and Lepowsky \& McCollum \cite{LM} relating irreducible $\mathfrak{g}$-modules to irreducible $U(\mathfrak{g})^K$-modules for appropriate compact Lie groups $K$ (this result plays an important role in the proof of the subquotient theorem for Harish-Chandra modules).

\subsection{}\label{1-7}
We will say that a spin graph function $f$ is {\it elementary} if it is a simultaneous eigenfunction of the quantum Hamiltonians, i.e., when $f\in\mathcal{H}_\chi$ for some $\chi\in I^\wedge$. For tensor product $\mathbf{K}$-representations $S$ we construct spanning sets of $\mathcal{H}_\chi$ 
using the data of the following colored version of $\Gamma$. 

The colors at the vertices $v\in V$ are the local representations $\sigma_v: K_v\rightarrow\textup{GL}(S_v)$ of the tensor product 
representation $S$.
To obtain the colors of the edges, we use the fact that $\mathcal{H}_\chi\not=\{0\}$ if and only if $\chi$ is the central character of an irreducible representation of $G^E$.
The irreducible $G^E$-representation provides the colors of the edges of $\Gamma$ by local irreducible $G$-representations.

We construct the spanning set of the elementary spin graph functions in $\mathcal{H}_\chi$ in terms of
local invariant tensors (local 
in the sense that they only depend on the star of some vertex $v$ of the colored graph $\Gamma$). 

\subsection{}\label{1-8}
If $\Gamma$ is the directed cycle graph with $n$ edges with $K_v=G$ for all $v\in V$, then we show that the resulting elementary spin graph functions are essentially the generalised (or $n$-point) trace functions from Etingof \& Schiffmann \cite{ES}. 

If $\Gamma$ is the linearly ordered linear graph with $n$ edges
and the local gauge groups $K_v$ are $G$ (resp. $K$) for $2$-valent (resp. $1$-valent) vertices $v\in V$,
then we show that the resulting elementary spin graph functions are the $n$-point spherical functions from \cite{RS}. 
For $n=1$ these are the usual elementary $\sigma$-spherical functions on $G$, see, e.g., \cite{HC0,SR}. 

In both cases the local invariant tensors may be identified with topological degenerations of vertex operators, cf. \cite{SR,St}.

\subsection{}\label{1-9}
The explicit desciption of the elementary spin graph functions as multipoint trace functions and multipoint spherical functions for 
the two special cases in \S\ref{1-8}, connects the associated quantum spin systems to the {\it periodic} and {\it open quantum spin Calogero-Moser chains} from \cite{ES,RS,Re3} and
\cite{SR,RS}, respectively. 

This can be made concrete on the level of quantum Hamiltonians. It requires
a parametrisation of the moduli space $G^E/\mathbf{K}$ of $G$-graph connections in terms of an appropriate subtorus $T$ of $G$, as well as Harish-Chandra's radial component techniques to describe the action of the edge-coordinate quadratic Casimirs on $\mathcal{H}_{\Gamma,G,S}$ in terms of explicit second-order $\textup{End}(S)$-valued differential operators $H_e$ ($e\in E$) on $T$ (which are the quadratic Hamiltonians of the quantum Calogero-Moser spin chain up to a gauge). 
The
differences $H_e-H_{e^\prime}$ for neighboring edges $e$ and $e^\prime$ then form 
  an explicit commuting family of first order differential operators, called asymptotic Knizhnik-Zamolodchikov-Bernard operators. 
 See \cite{ES} and \cite{SR,RS} for the details.

\subsection{}\label{1-10}
Combining the results from \S\ref{1-6} and \S\ref{1-9} we obtain explicit conditions ensuring the superintegrability of the periodic and open quantum spin Calogero-Moser
chains. For the special case of the directed cycle graph $\Gamma$ with one vertex and $\mathbf{K}=G$, the superintegrability of 
the associated periodic quantum spin Calogero-Moser system
was considered before in \cite{Re2}. The classical superintegrability of the periodic and open Calogero-Moser spin chains is discussed in \cite{Re23}.

\subsection{}\label{1-11}
The contents of the paper is as follows.

In {\it Section \ref{2}} we describe the type of quantum systems that we consider in this paper, and discuss the concept of superintegrability in this context. 

In {\it Section \ref{3}} we formulate a result of Harish-Chandra \cite{HC} and Lepowsky \& McCollum \cite{LM} (Corollary \ref{3-17}) that will play the key role in establishing the superintegrability of the quantum spin systems on moduli spaces of graph connections. This involves the concept of reductive extensions of Lie algebras, which we discuss in detail.

In {\it Section \ref{4}} we introduce the space of spin graph functions on graph connections, and provide a spanning set in terms of local invariant tensors (Theorem \ref{4-15}). For 
the directed cycle graph and the linearly ordered linear graph we relate the spin graph functions to multipoint trace functions and multipoint spherical functions (see \S\ref{4-17} and \S\ref{4-18}).
 
 In {\it Section \ref{5}} we provide a detailed introduction of the quantum spin systems on moduli spaces of graph connections. We state the conditions ensuring superintegrability of the quantum spin system 
  and discuss the superintegrability of the quantum periodic and open spin Calogero-Moser chains (see \S\ref{5-12} and \S\ref{5-13}).
 
In {\it Section \ref{6}} we give the proof of the main result (Theorem \ref{1-6}/\ref{5-11}). The crucial intermediate step, which will allow us to use the result of Harish-Chandra and Lepowsky \& McCollum in this context, is the translation of the condition of superintegrability in terms of irreducibility conditions of local intertwining spaces at the stars of the vertices of the graph (see Corollary \ref{6-9}).

\vspace{1cm}
\noindent
{\bf Conventions:} The ground field will be $\mathbb{C}$ unless explicitly stated otherwise. Lie algebras are finite dimensional unless stated explicitly otherwise. We use $\textup{Hom}(V,W)$ for the Hom-space in the category of complex vector spaces. For $G$ a group, $A$ an associative algebra and $\mathfrak{g}$ a Lie algebra
we write $\textup{Hom}_G(V,W)$, $\textup{Hom}_A(V,W)$, $\textup{Hom}_{\mathfrak{g}}(V,W)$ for the Hom-space in the category of $G$-representations, left $A$-modules and $\mathfrak{g}$-modules, respectively. We write $U(\mathfrak{k})$ for the universal enveloping algebra of a complex Lie algebra $\mathfrak{k}$, and $Z(\mathfrak{k})$ for its center.

For sets $X,\mathcal{I}$ with $\mathcal{I}$ finite, we write $X^{\mathcal{I}}$ for the direct product of $\#\mathcal{I}$-copies of $X$. In case $X$ is a Lie group/algebra, we endow $X^{\mathcal{I}}$ with the direct
product Lie group/algebra structure.

For a finite family $\{M_i\}_{i\in\mathcal{I}}$ of vector spaces $M_i$ with index set $\mathcal{I}=\{i_1,\ldots,i_s\}$, totally ordered by $i_1<\cdots<i_s$, we write 
\[
\bigotimes_{i\in\mathcal{I}}M_i:=M_{i_1}\otimes\cdots\otimes M_{i_s}.
\]
\vspace{.5cm}\\
\noindent
{\bf Acknowledgements:} both authors were supported by the Dutch Research Council (NWO), project number 613.009.1260. In addition, the work of N.R. was supported by the NSF grant DMS-1902226, by the RSF grant 18-11-00-297 and by the Changjiang fund.

\section{Centraliser algebras}\label{2}
In this section we derive some elementary properties of centraliser algebras, with an eye towards the application to quantum superintegrable systems.
The starting point is a complex vector space $\mathcal{H}$ and an inclusion
\[
I\subseteq A\subseteq \textup{End}(\mathcal{H})
\]
of unital algebras, with $I$ being commutative. In applications to quantum mechanics $\mathcal{H}$ is the quantum state space,
$A$ the algebra of observables, and $I$ its subalgebra of quantum Hamiltonians. We do not fix a particular $H\in I$ as 
the quantum Hamiltonian of the system, since we are not considering quantum dynamics at this point.

\subsection{}\label{2-1} 
Denote by $I^{\wedge{}}$ the set of characters of $I$. For an element $\chi\in I^{\wedge{}}$, i.e., for an unital algebra homomorphism $\chi: I \rightarrow\mathbb{C}$, we write
\[
\mathcal{H}_\chi:=\{h\in\mathcal{H}\,\, | \,\, y\cdot h=\chi(y)h\quad \forall\, y\in I\}
\]
for the corresponding joint eigenspace (it may be zero).
\subsection{}\label{2-2} 
Denote by
\[
C_A(I):=\{x\in A \,\, | \,\, xy=yx\quad \forall y\in I\}
\]
the centraliser of $I$ in $A$. It is a subalgebra of $A$ containing $I$. It stabilises $\mathcal{H}_\chi$ for all $\chi\in I^\wedge$.
\subsection{}\label{2-3} 
Suppose that $J\subseteq\textup{End}(\mathcal{H})$ is a sub-algebra stabilising $\mathcal{H}_\chi\subseteq\mathcal{H}$.
Then $\mathcal{H}_\chi$ is a $J$-module, and 
\[
J_\chi:=\{x\vert_{\mathcal{H}_\chi}\,\, | \,\, x\in J\}
\]
is a sub-algebra of $\textup{End}(\mathcal{H}_\chi)$. If $\mathcal{H}_\chi$ is a finite dimensional irreducible $J$-module, then
 $J_\chi=\textup{End}(\mathcal{H}_\chi)$ by the density theorem. If $J=I$ then we have $I_\chi=\mathbb{C}\textup{id}_{\mathcal{H}_\chi}$.
\subsection{}\label{2-4} 
Let $J$ be a sub-algebra of $A$ containing $I$. Then 
\[
C_A(J)\subseteq C_A(I).
\]
If in addition $J$ stabilises $\mathcal{H}_\chi$ for some $\chi\in I^\wedge$, then
$C_A(J)$ stabilises $\mathcal{H}_\chi$
in view of \S\ref{2-2}. The fact that both $J$ and $C_A(J)$ stabilise $\mathcal{H}_\chi$ implies that $C_A(J)_\chi$ is contained in the commutant of $J_\chi$ in $\textup{End}(\mathcal{H}_\chi)$. 

If in addition $\mathcal{H}_\chi$ is an irreducible finite dimensional $J$-module (in particular, $\mathcal{H}_\chi\not=\{0\}$),
then 
\[
C_A(J)_\chi=\mathbb{C}\,\textup{id}_{\mathcal{H}_\chi}=I_\chi
\]
by Schur's lemma.
\subsection{}\label{2-5}
Suppose that $J\subseteq A$ is a subalgebra satisfying 
\[
I\subseteq J\subseteq C_A(I).
\]
For such an algebra $J$ the joint eigenspace $\mathcal{H}_\chi$ is $J$-stable for all $\chi\in I^\wedge$, in view of \S\ref{2-2}.

If in addition $\mathcal{H}$ is a semisimple $I$-module (i.e., $\mathcal{H}=\bigoplus_{\chi\in I^\wedge}\mathcal{H}_\chi$),
then the map 
\[
J\rightarrow\prod_{\chi\in I^\wedge}\textup{End}(\mathcal{H}_\chi),\qquad x\mapsto \bigl(x\vert_{\mathcal{H}_\chi}\bigr)_{\chi\in I^\wedge}
\]
is an injective algebra homomorphism, with $\prod_{\chi\in I^\wedge}\textup{End}(\mathcal{H}_\chi)$ the direct product of the
family $\{\textup{End}(\mathcal{H}_\chi) \,\, | \,\, \chi\in I^\wedge\}$ of algebras. Its image is contained in $\prod_{\chi\in I^\wedge}J_\chi$.
\subsection{}\label{2-6}
Suppose that $J\subseteq A$ is a subalgebra satisfying
\begin{equation}\label{qssweak}
I\subseteq J\subseteq C_A(I)\subseteq A\subseteq\textup{End}(\mathcal{H}).
\end{equation}
Assume furthermore that the following two spectral properties hold true:
\begin{enumerate}
\item[(a)] $\mathcal{H}$ is a semisimple $I$-module.
\item[(b)] For $\chi\in I^\wedge$, either $\mathcal{H}_\chi=\{0\}$ or $\mathcal{H}_\chi$ is an irreducible finite dimensional $J$-module.
\end{enumerate}
By \S\ref{2-3} and \S\ref{2-4} we then have 
\begin{equation}\label{qsprop}
\begin{split}
J_\chi&=\textup{End}(\mathcal{H}_\chi)=C_A(I)_\chi,\\
I_\chi&=\mathbb{C}\textup{id}_{\mathcal{H}_\chi}=C_A(J)_\chi
\end{split}
\end{equation}
for all $\chi\in I^\wedge$. Informally speaking, $J$ is ``locally'' of maximal size and equal to $C_A(I)$, and $I$ is ``locally'' the center of $J$.
\subsection{}\label{2-7}
The setup of \S\ref{2-6} provides the mathematical framework for superintegrability of quantum systems in this paper. From this perspective \eqref{qssweak} is defining a quantum system with quantum state space $\mathcal{H}$, algebra of quantum observables $A$, algebra of quantum Hamiltonians 
$I$ and algebra of quantum integrals $J$.
 \begin{defi}
 The quantum system \eqref{qssweak} is said to be superintegrable if the two spectral conditions \ref{2-6}(a)\&(b) hold true. 
 \end{defi}
The resulting properties \eqref{qsprop} for the quantum superintegrable system provide the link with the notion of a core structure of a quantum superintegrable system considered in \cite[\S 2]{Re2}.

A quantum superintegrable system is said to be quantum integrable if $I=J$.
In this case $\textup{dim}(\mathcal{H}_\chi)\leq 1$ for all $\chi\in I^\wedge$, i.e., the eigenvalues of the quantum Hamiltonians determine the 
corresponding joint eigenvector up to a multiplicative constant. 

For quantum superintegrable systems eigenstates this is no longer true. 
But when $f,g\in\mathcal{H}_\chi$ then there exists a quantum integral $D\in J$ such that $g=D(f)$. Here we use that by the density theorem, the irreducibility of the $J$-module $\mathcal{H}_\chi$ is equivalent to  
\[
J_\chi=\textup{End}(\mathcal{H}_\chi).
\]

For the examples of quantum superintegrable systems we thus have the weaker condition that simultaneous eigen\-spa\-ces are finite dimensional, but two joint 
eigenvectors
with the same eigenvalues can always be related through the action of a quantum integral.  
\subsection{}\label{2-8}
From the perspective of quantisation, quantum superintegrability requires the algebras $I,J$ and $A$ to be quantisations of the Poisson algebras of Hamiltonians, integrals and observables for a classical superintegrable system (which is also sometimes called a degenerate integrable system). This is known in the case of periodic and open quantum spin Calogero-Moser chains \cite{Re1,Re23}. 
For a discussion of classical superintegrability, see \cite{Re1} and references therein. 

\section{Preservation of irreducibility}\label{3}
In this section we focus on a purely representation theoretic result due to Lepowsky and McCollum \cite[Thm. 5.5]{LM} (in special cases it goes back to Harish-Chandra \cite[Thm. 2]{HC}). It will be the crucial ingredient in proving superintegrability of the quantum spin systems on graph connections in Section \ref{6}.
\subsection{}\label{3-1}
Let $\mathfrak{g}$ be a Lie algebra. Recall that a $\mathfrak{g}$-module $M$ is said to be semisimple if $M$ is the sum of its irreducible $\mathfrak{g}$-submodules.
If furthermore all the irreducible $\mathfrak{g}$-submodules of $M$ are finite dimensional, then we say that $M$ is a {\it finitely semisimple} $\mathfrak{g}$-module. 

\subsection{}\label{3-2}

Let $G$ be a real Lie group and $K\subseteq G$ a connected compact Lie subgroup. Denote by $\mathfrak{g}_0$ the Lie algebra of $G$,
and $\mathfrak{g}$ its complexification.  If $\pi$ is a Hilbert space representation, then its (dense) subspace $M$ of smooth $K$-finite vectors becomes a $(\mathfrak{g},K)$-module with $x\in\mathfrak{g}_0$ acting by 
\[
x\cdot m:=\frac{d}{dt}\bigg|_{t=0}\pi(\exp(tx))m
\]
(see, e.g., \cite[\S 3.3.1]{Wa} for the definition of a $(\mathfrak{g},K)$-module).
The $(\mathfrak{g},K)$-module $M$
is finitely semisimple as a $\mathfrak{k}$-module.
Furthermore, if 
$\pi$ is irreducible and unitary,
 then the associated $(\mathfrak{g},K)$-module $M$ is irreducible as $\mathfrak{g}$-module. This in fact holds true under the weaker assumption that $\pi$ is irreducible and admissible
(see, e.g., \cite[\S 3.3-4]{Wa} for further details). 

\subsection{}\label{3-3}
Let $\mathfrak{k}\subseteq\mathfrak{g}$ be an inclusion of Lie algebras and $M$ a $\mathfrak{g}$-module. Denote by $\mathfrak{k}^\wedge$ the set of isomorphism classes of finite dimensional irreducible $\mathfrak{k}$-modules. For $\alpha\in\mathfrak{k}^\wedge$ the {\it $\alpha$-isotypical component $M_\alpha$ of $M$} is the subspace of $M$ generated by the finite dimensional irreducible $\mathfrak{k}$-submodules of $M$ from the isomorphism class $\alpha$. The sum
$\sum_{\alpha\in\mathfrak{k}^\wedge}M_\alpha\subseteq M$
is direct (see, e.g., \cite[\S 1.2.8]{Di}). Furthermore, $M=\bigoplus_{\alpha\in\mathfrak{k}^\wedge}M_\alpha$ if and only if $M$ is finitely semisimple as a $\mathfrak{k}$-module.
\subsection{}\label{3-4}
A Lie subalgebra $\mathfrak{k}\subseteq\mathfrak{g}$ is said to be {\it reductive in $\mathfrak{g}$} when $\mathfrak{g}$ is a
semisimple $\textup{ad}(\mathfrak{k})$-module. 

Note that if $\mathfrak{k}$ is reductive in $\mathfrak{g}$, then $\mathfrak{k}$ is a reductive Lie algebra. 
On the other hand, if $\mathfrak{k}$ is a semisimple Lie subalgebra of $\mathfrak{g}$, then $\mathfrak{k}$ is reductive in $\mathfrak{g}$ by
Weyl's complete reducibility theorem. 

\subsection{}\label{3-5}
Let $G$ be a real Lie group with Lie algebra $\mathfrak{g}_0$, and $K\subseteq G$ a connected compact Lie subgroup. Denote by 
$\mathfrak{k}$ and $\mathfrak{g}$ the complexified Lie algebras of $K$ and $G$, respectively. Then $\mathfrak{k}$ is reductive in $\mathfrak{g}$. 

\subsection{}\label{3-6}
Let $\mathfrak{g}$ be a Lie algebra and $\theta\in\textup{Aut}(\mathfrak{g})$ an automorphism of finite order $n$. The associated fix-point Lie algebra is
\[
\mathfrak{g}^\theta:=\{x\in\mathfrak{g} \,\, | \,\, \theta(x)=x\}.
\]
\begin{prop}
Suppose that $\theta$ is an automorphism of a semisimple Lie algebra $\mathfrak{g}$ of finite order $m$. Then $\mathfrak{g}^\theta$ is reductive in $\mathfrak{g}$.
\end{prop}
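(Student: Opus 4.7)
The plan is to reduce the proposition to \S\ref{3-5} by realising $\mathfrak{g}^\theta$ as the complexified Lie algebra of a connected compact subgroup $K$ of a compact real Lie group $G$ whose complexified Lie algebra is $\mathfrak{g}$.

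The first step, and the main obstacle, is to produce a compact real form $\mathfrak{u}_0\subset\mathfrak{g}$ that is stable under $\theta$. Compact real forms of the semisimple Lie algebra $\mathfrak{g}$ exist, and the stabiliser of any such form is a maximal compact subgroup of $\textup{Aut}(\mathfrak{g})$. By the classical theorem of Cartan (extended by Mostow to the non-connected setting) that every compact subgroup of $\textup{Aut}(\mathfrak{g})$ is conjugate into a maximal compact subgroup, the finite cyclic group $\langle\theta\rangle$ lies in the stabiliser of some compact real form $\mathfrak{u}_0$, which is then $\theta$-stable.

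Next, I would take $G$ to be the simply connected compact real Lie group with real Lie algebra $\mathfrak{u}_0$ (compact by Weyl's theorem, since $\mathfrak{u}_0$ is a compact semisimple real Lie algebra). The real Lie algebra automorphism $\theta|_{\mathfrak{u}_0}$ integrates, by simple-connectedness of $G$, to a real Lie group automorphism $\Theta\in\textup{Aut}(G)$ of order dividing $m$. The identity component $K:=(G^\Theta)^0$ is then a closed connected, hence compact, Lie subgroup of $G$, with real Lie algebra $\mathfrak{u}_0^\theta$ and complexified Lie algebra
\[
(\mathfrak{u}_0^\theta)\otimes_{\mathbb{R}}\mathbb{C}=\mathfrak{g}^\theta,
\]
where the identification uses the $\theta$-stability of $\mathfrak{u}_0$ together with the decomposition $\mathfrak{g}=\mathfrak{u}_0\oplus i\mathfrak{u}_0$.

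Finally, applying \S\ref{3-5} to the pair $K\subseteq G$ immediately gives that $\mathfrak{g}^\theta$ is reductive in $\mathfrak{g}$. Once a $\theta$-stable compact real form is in hand, the remainder of the argument is essentially formal; the technical heart of the proof lies in that classical existence result.
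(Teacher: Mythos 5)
Your argument is correct, but it runs along a genuinely different route than the one taken in the paper. You reduce the statement to the compact-group criterion of \S\ref{3-5} by producing a $\theta$-stable compact real form $\mathfrak{u}_0$ (via the Cartan--Mostow conjugacy of maximal compact subgroups of $\textup{Aut}(\mathfrak{g})$, the stabiliser of a compact real form being maximal compact), integrating $\theta|_{\mathfrak{u}_0}$ on the simply connected compact group $G$ with Lie algebra $\mathfrak{u}_0$, and taking $K=(G^{\Theta})^0$; the identification $(\mathfrak{u}_0^{\theta})\otimes_{\mathbb{R}}\mathbb{C}=\mathfrak{g}^{\theta}$ from $\mathfrak{g}=\mathfrak{u}_0\oplus i\mathfrak{u}_0$ is as you say, and there is no circularity since \S\ref{3-5} does not rely on the proposition. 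The paper instead argues purely algebraically, adapting Dixmier's proof for involutions \cite[Prop. 1.13.3]{Di}: decompose $\mathfrak{g}$ into $\theta$-eigenspaces by Maschke's theorem, observe that the Killing form is $\theta$-invariant so that its restriction to $\mathfrak{g}^{\theta}$ is nondegenerate, note that $\mathfrak{g}^{\theta}$ contains the semisimple and nilpotent parts of its elements, and conclude by the criterion \cite[Prop. 1.7.6]{Di}. The trade-off: the paper's proof is self-contained at the Lie-algebra level and needs only finite-dimensional linear algebra plus Dixmier's criterion, whereas yours invokes the heavier transcendental input of the existence of a $\theta$-stable compact real form (equivalently, the conjugacy theorem for maximal compact subgroups of the possibly disconnected group $\textup{Aut}(\mathfrak{g})$). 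In exchange, your approach meshes directly with the compact-group framework the paper actually uses elsewhere, and it only exploits compactness of the closure of $\langle\theta\rangle$, so it applies verbatim to any automorphism generating a relatively compact subgroup of $\textup{Aut}(\mathfrak{g})$, not just those of finite order.
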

\begin{proof}
The proof is a rather straightforward adjustment of the proof of the statement for involutions, see \cite[Prop. 1.13.3]{Di}.
We give the proof for convenience of the reader.

Denote by $\mathfrak{g}_{\overline{r}}$ ($\overline{r}\in\mathbb{Z}/m\mathbb{Z}$) the eigenspace of $\theta$ with eigenvalue $e^{2\pi i/r}$. 
Then $\mathfrak{g}^\theta=\mathfrak{g}_{\overline{0}}$ and $\mathfrak{g}_{\overline{r}}$ are $\textup{ad}(\mathfrak{g}^\theta)$-invariant subspaces of $\mathfrak{g}$. 
Since $\theta$ is of order $m$, the assignment $\overline{1}\mapsto\theta$ defines a representation $\mathbb{Z}/m\mathbb{Z}\rightarrow\textup{GL}(\mathfrak{g})$ of the finite abelian group $\mathbb{Z}/m\mathbb{Z}$. By Maschke's theorem, we conclude that
\[
\mathfrak{g}=\bigoplus_{\overline{r}\in\mathbb{Z}/m\mathbb{Z}}\mathfrak{g}_{\overline{r}}.
\]

Write $\mathfrak{p}:=\bigoplus_{\overline{r}\not=\overline{0}}\mathfrak{g}_{\overline{r}}$, so that
\[
\mathfrak{g}=\mathfrak{g}^\theta\oplus\mathfrak{p}.
\]

Let $\kappa: \mathfrak{g}\times\mathfrak{g}\rightarrow\mathbb{C}$ be the Killing form of $\mathfrak{g}$. Then $\kappa(\theta(x),\theta(y))=\kappa(x,y)$ for all $x,y\in\mathfrak{g}$, hence
$\kappa(\mathfrak{g}^\theta,\mathfrak{p})=0$. 
Since $\mathfrak{g}$ is semisimple, we conclude that $\kappa\vert_{\mathfrak{g}^\theta\times\mathfrak{g}^\theta}$ is nondegenerate. Furthermore, if $x\in\mathfrak{g}^\theta$ and $x=s+n$ is the abstract Chevalley decomposition of $x$ in $\mathfrak{g}$, with $s\in\mathfrak{g}$ (resp. $n\in\mathfrak{g}$) the semisimple (resp. nilpotent) part of $x$, then
$s,n\in\mathfrak{g}^\theta$ (this holds true for any automorphism $\theta$ of $\mathfrak{g}$). Then \cite[Prop. 1.7.6]{Di} implies that $\mathfrak{g}^\theta$ is reductive in $\mathfrak{g}$.
\end{proof}

\subsection{}\label{3-7}
If $\mathfrak{k}$ is reductive in $\mathfrak{g}$ and $M$ is a finitely semisimple $\mathfrak{g}$-module, then $M$ is finitely semisimple as a $\mathfrak{k}$-module by \cite[Prop. 1.7.9(ii)]{Di}. In particular, suppose that we have inclusions of Lie algebras
\[
\mathfrak{l}\subseteq\mathfrak{m}\subseteq\mathfrak{g}
\]
where $\mathfrak{m}$ is reductive in $\mathfrak{g}$ and $\mathfrak{l}$ is reductive in $\mathfrak{m}$, then $\mathfrak{l}$ is reductive in $\mathfrak{g}$.

\subsection{}\label{3-8}
For a homomorphic image of a Lie algebra $\mathfrak{k}\subseteq\mathfrak{g}$ which is reductive in $\mathfrak{g}$, we have the following result.
\begin{lem}
Suppose that $\mathfrak{k}$ is reductive in $\mathfrak{g}$. Let $\phi: \mathfrak{g}\twoheadrightarrow\mathfrak{l}$ be an epimorphism of Lie algebras. Then
$\phi(\mathfrak{k})$ is reductive in $\mathfrak{l}$.
\end{lem}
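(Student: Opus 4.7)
The plan is to exhibit $\mathfrak{l}$ as a quotient of $\mathfrak{g}$ in the category of $\mathfrak{k}$-modules (with $\mathfrak{k}$ acting on $\mathfrak{l}$ through $\phi$), and then observe that $\mathfrak{k}$-semisimplicity of $\mathfrak{l}$ coincides with $\phi(\mathfrak{k})$-semisimplicity of $\mathfrak{l}$.

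More concretely, first I would verify equivariance: for $x\in\mathfrak{k}$ and $y\in\mathfrak{g}$, the identity $\phi([x,y])=[\phi(x),\phi(y)]$ says exactly that
\[
\textup{ad}(\phi(x))\circ\phi=\phi\circ\textup{ad}(x),
\]
so $\phi:\mathfrak{g}\twoheadrightarrow\mathfrak{l}$ is a surjective homomorphism of $\mathfrak{k}$-modules, where $\mathfrak{k}$ acts on $\mathfrak{g}$ via $\textup{ad}$ and on $\mathfrak{l}$ via $\textup{ad}\circ\phi$. Since $\mathfrak{k}$ is reductive in $\mathfrak{g}$, the $\mathfrak{k}$-module $\mathfrak{g}$ is semisimple, and any quotient of a semisimple module is semisimple (the kernel $\ker(\phi)$ admits a $\mathfrak{k}$-stable complement in $\mathfrak{g}$, which maps isomorphically onto $\mathfrak{l}$). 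Hence $\mathfrak{l}$ is semisimple as a $\mathfrak{k}$-module via the action $\textup{ad}\circ\phi$.

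Next I would note that since $\mathfrak{k}$ acts on $\mathfrak{l}$ through $\phi(\mathfrak{k})$, a subspace $W\subseteq\mathfrak{l}$ is $\mathfrak{k}$-stable if and only if it is $\phi(\mathfrak{k})$-stable, and the submodule lattices coincide. Therefore semisimplicity of $\mathfrak{l}$ as a $\mathfrak{k}$-module is the same as semisimplicity of $\mathfrak{l}$ as an $\textup{ad}(\phi(\mathfrak{k}))$-module. By definition, this says that $\phi(\mathfrak{k})$ is reductive in $\mathfrak{l}$.

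I do not foresee a real obstacle: the argument is essentially the standard fact that quotients of semisimple modules are semisimple, combined with the transparent observation that an action factoring through a Lie algebra quotient has the same invariant subspaces as the quotient action. The only point requiring a brief sentence of care is the equivariance of $\phi$, i.e., checking that $\phi$ intertwines $\textup{ad}(x)$ on $\mathfrak{g}$ with $\textup{ad}(\phi(x))$ on $\mathfrak{l}$ for $x\in\mathfrak{k}$, which is immediate from $\phi$ being a Lie algebra homomorphism.
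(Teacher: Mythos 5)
Your proposal is correct and follows essentially the same route as the paper: the paper pushes a decomposition $\mathfrak{g}=\bigoplus_i S_i$ into irreducible $\textup{ad}(\mathfrak{k})$-modules through $\phi$ and extracts a direct sum by induction, which is just a hands-on version of your appeal to ``quotients of semisimple modules are semisimple,'' and it implicitly uses the same observation that each nonzero $\phi(S_i)$ is irreducible for $\textup{ad}(\phi(\mathfrak{k}))$ because the $\mathfrak{k}$-action on $\mathfrak{l}$ factors through $\phi(\mathfrak{k})$. No gaps; the equivariance check and the coincidence of the $\mathfrak{k}$- and $\phi(\mathfrak{k})$-submodule lattices are exactly the points that make both arguments work.
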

\begin{proof}
Let
$\mathfrak{g}=\bigoplus_{i=1}^mS_i$ be a decomposition as a direct sum of finite dimensional irreducible $\textup{ad}(\mathfrak{k})$-modules.
Then $\mathfrak{l}=\sum_{i=1}^m\phi(S_i)$. Either $\phi(S_i)=\{0\}$ or $\phi(S_i)$ is an irreducible $\textup{ad}(\phi(\mathfrak{k}))$-module. By a straightforward induction argument it follows that $\mathfrak{l}=\bigoplus_{i\in\mathcal{I}}\phi(S_i)$ for some subset $\mathcal{I}\subseteq \{1,\ldots,m\}$. This completes the proof.
\end{proof}

\subsection{}\label{3-9}
For $m\in\mathbb{Z}_{>0}$ denote by $\delta_{\mathfrak{g}}^{(m)}: \mathfrak{g}\rightarrow\mathfrak{g}^{\times m}$ the Lie algebra homomorphism mapping $x\in\mathfrak{g}$ to the $m$-tuple $(x,\ldots,x)$. If $\mathfrak{k}$ is a Lie subalgebra of $\mathfrak{g}$, then we denote by
$\mathfrak{k}^{(m)}\subseteq\mathfrak{g}^{\times m}$ its image under $\delta_{\mathfrak{g}}^{(m)}$.
\begin{prop}
Suppose that $\mathfrak{g}$ is semisimple and that $\mathfrak{k}$ is reductive in $\mathfrak{g}$. Then $\mathfrak{k}^{(m)}$ is reductive in $\mathfrak{g}^{\times m}$.
\end{prop}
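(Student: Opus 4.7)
The plan is to prove directly that $\mathfrak{g}^{\times m}$ is a semisimple $\textup{ad}(\mathfrak{k}^{(m)})$-module by pulling back the semisimple decomposition of $\mathfrak{g}$ as $\textup{ad}(\mathfrak{k})$-module along each of the $m$ coordinate projections. The key point is that the diagonal embedding $\delta_{\mathfrak{g}}^{(m)}$ is engineered precisely so that, on each coordinate factor of $\mathfrak{g}^{\times m}$, the resulting action reproduces the original adjoint action of $\mathfrak{k}$ on $\mathfrak{g}$.

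Concretely, I would let $\mathfrak{g}_i\subseteq\mathfrak{g}^{\times m}$ denote the $i$-th coordinate Lie ideal, so that
\[
\mathfrak{g}^{\times m}=\bigoplus_{i=1}^m\mathfrak{g}_i
\]
as Lie algebras. Each $\mathfrak{g}_i$ is $\textup{ad}(\mathfrak{k}^{(m)})$-stable, and the identification $\mathfrak{k}^{(m)}\cong\mathfrak{k}$ provided by $\delta_{\mathfrak{g}}^{(m)}$ turns $\mathfrak{g}_i$ into a copy of the $\textup{ad}(\mathfrak{k})$-module $\mathfrak{g}$, since $\textup{ad}(\delta_{\mathfrak{g}}^{(m)}(x))$ acts on the $i$-th slot by $y\mapsto [x,y]$ and annihilates the other slots.

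Since $\mathfrak{k}$ is reductive in $\mathfrak{g}$, each $\mathfrak{g}_i$ is then a sum of finite dimensional irreducible $\textup{ad}(\mathfrak{k}^{(m)})$-submodules, and summing over $i=1,\ldots,m$ exhibits $\mathfrak{g}^{\times m}$ as a sum of finite dimensional irreducible $\textup{ad}(\mathfrak{k}^{(m)})$-submodules. This is precisely the required statement that $\mathfrak{k}^{(m)}$ is reductive in $\mathfrak{g}^{\times m}$.

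No serious obstacle is anticipated; the argument reduces to the observation that a direct sum of semisimple modules is semisimple, combined with the compatibility check between the diagonal embedding and the decomposition into coordinate ideals. I would note in passing that the semisimplicity hypothesis on $\mathfrak{g}$ does not enter the argument as stated, and is presumably included here to match the setting in which the proposition will later be applied.
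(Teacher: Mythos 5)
Your argument is correct, and it is genuinely different from the paper's. You decompose $\mathfrak{g}^{\times m}$ into the coordinate ideals $\mathfrak{g}_i$, note that each is $\textup{ad}(\mathfrak{k}^{(m)})$-stable and, via the isomorphism $\mathfrak{k}\cong\mathfrak{k}^{(m)}$ given by $\delta_{\mathfrak{g}}^{(m)}$, is a copy of the $\textup{ad}(\mathfrak{k})$-module $\mathfrak{g}$; since a direct sum of semisimple modules is semisimple, $\mathfrak{g}^{\times m}$ is a semisimple $\textup{ad}(\mathfrak{k}^{(m)})$-module. The paper instead stays inside its ``reductive in'' toolbox: it first applies Lemma \ref{3-8} (reductivity passes to homomorphic images) to get $\mathfrak{k}^{(m)}$ reductive in the diagonal copy $\mathfrak{g}^{(m)}$, then identifies $\mathfrak{g}^{(m)}=(\mathfrak{g}^{\times m})^{\theta_m}$ as the fixed-point algebra of the cyclic-shift automorphism $\theta_m$ of order $m$ and invokes Proposition \ref{3-6} (fixed points of a finite-order automorphism of a semisimple Lie algebra are reductive in it) to get $\mathfrak{g}^{(m)}$ reductive in $\mathfrak{g}^{\times m}$, and finally concludes by the transitivity statement of \S\ref{3-7}. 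Your route is more elementary and slightly more general: as you correctly observe, the semisimplicity of $\mathfrak{g}$ is never used in your argument, whereas the paper needs it because the Killing-form argument in Proposition \ref{3-6} requires it. What the paper's heavier route buys is reuse of results (Proposition \ref{3-6}, Lemma \ref{3-8}, transitivity) that it develops anyway for the surrounding theory, whereas your proof is self-contained for this particular proposition.
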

\begin{proof}
By Lemma \ref{3-8}, $\mathfrak{k}^{(m)}$ is reductive in $\mathfrak{g}^{(m)}$. Note that
\[
\mathfrak{g}^{(m)}=(\mathfrak{g}^{\times m})^{\theta_m}
\]
with $\theta_m$ the automorphism of $\mathfrak{g}^{\times m}$ of order $m$ defined by 
\[
\theta_m(x_1,\ldots,x_m):=(x_m,x_1,\ldots,x_{m-1}).
\]
Proposition \ref{3-6} then shows that $\mathfrak{g}^{(m)}$ is reductive in $\mathfrak{g}^{\times m}$. Hence $\mathfrak{k}^{(m)}$ is reductive in $\mathfrak{g}^{\times m}$ by \S\ref{3-7}.
\end{proof}

\subsection{}\label{3-10}
Let $\mathfrak{k}\subseteq\mathfrak{g}$ be an inclusion of Lie algebras.
We say that $\mathfrak{g}$ is a {\it reductive extension of $\mathfrak{k}$} 
when the inclusion map $\mathfrak{k}\hookrightarrow\mathfrak{g}$ is a section of $\textup{ad}(\mathfrak{k})$-modules and the quotient module $\mathfrak{g}/\mathfrak{k}$ is a semisimple $\mathfrak{k}$-module.
We then typically write $\mathfrak{p}$ for a choice of an $\textup{ad}(\mathfrak{k})$-invariant complement of $\mathfrak{k}$ in $\mathfrak{g}$ (which is finitely semisimple as $\textup{ad}(\mathfrak{k})$-module).  
\subsection{}\label{3-11}
Let $\mathfrak{k}\subseteq\mathfrak{g}$ be a Lie subalgebra. 
The following two statements are equivalent:
\begin{enumerate}
\item[(a)] $\mathfrak{k}$ is reductive in $\mathfrak{g}$.
\item[(b)] $\mathfrak{k}$ is a reductive Lie algebra and $\mathfrak{g}$ is a reductive extension of $\mathfrak{k}$.
\end{enumerate}
In particular, \S\ref{3-5}, \S\ref{3-6} and \S\ref{3-7} provide examples of reductive extensions.

\subsection{}\label{3-12} 
The following result should be compared to the transitivity property in \S\ref{3-7}.
\begin{lem}
Let $\mathfrak{l}\subseteq\mathfrak{m}\subseteq\mathfrak{g}$ be inclusions of finite dimensional Lie algebras. Suppose that $\mathfrak{g}$ is a reductive extension of $\mathfrak{m}$
and that $\mathfrak{l}$ is reductive in $\mathfrak{m}$. Then $\mathfrak{l}$ is reductive in $\mathfrak{g}$. 
\end{lem}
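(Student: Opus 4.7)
The plan is to leverage the definition of a reductive extension to split $\mathfrak{g}$ as an $\textup{ad}(\mathfrak{m})$-module decomposition, and then reduce to showing the two pieces are semisimple as $\textup{ad}(\mathfrak{l})$-modules. Concretely, since $\mathfrak{g}$ is a reductive extension of $\mathfrak{m}$, the definition in \S\ref{3-10} yields an $\textup{ad}(\mathfrak{m})$-invariant complement $\mathfrak{p}$ of $\mathfrak{m}$ in $\mathfrak{g}$, with $\mathfrak{p}\cong \mathfrak{g}/\mathfrak{m}$ semisimple as $\mathfrak{m}$-module. Because $\mathfrak{l}\subseteq\mathfrak{m}$, the direct sum decomposition
\[
\mathfrak{g}=\mathfrak{m}\oplus\mathfrak{p}
\]
is also a decomposition of $\textup{ad}(\mathfrak{l})$-modules, so it suffices to prove that each of $\mathfrak{m}$ and $\mathfrak{p}$ is semisimple as $\textup{ad}(\mathfrak{l})$-module.

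For the first summand, the semisimplicity of $\mathfrak{m}$ as $\textup{ad}(\mathfrak{l})$-module is precisely the hypothesis that $\mathfrak{l}$ is reductive in $\mathfrak{m}$. For the second summand, I would apply \S\ref{3-7}: since $\mathfrak{l}$ is reductive in $\mathfrak{m}$ and $\mathfrak{p}$ is a finite dimensional (hence finitely) semisimple $\mathfrak{m}$-module, $\mathfrak{p}$ is finitely semisimple as $\mathfrak{l}$-module. Combining the two semisimplicity statements, $\mathfrak{g}$ is semisimple as $\textup{ad}(\mathfrak{l})$-module, which by definition means $\mathfrak{l}$ is reductive in $\mathfrak{g}$.

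There is no serious obstacle; the only point that warrants care is the finite-dimensional/finitely-semisimple bookkeeping. Because $\mathfrak{p}$ is finite dimensional, every irreducible $\mathfrak{m}$-submodule of it is automatically finite dimensional, so the assumption in \S\ref{3-10} that $\mathfrak{p}$ is semisimple over $\mathfrak{m}$ upgrades freely to finitely semisimple, which is exactly what \S\ref{3-7} requires. Thus the transitivity statement is essentially a direct combination of the module-theoretic definition of reductive extension with the transitivity result of Dixmier already recorded in \S\ref{3-7}.
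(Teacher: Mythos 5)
Your proof is correct and follows essentially the same route as the paper: decompose $\mathfrak{g}=\mathfrak{m}\oplus\mathfrak{p}$ with $\mathfrak{p}$ an $\textup{ad}(\mathfrak{m})$-invariant complement, use the hypothesis that $\mathfrak{l}$ is reductive in $\mathfrak{m}$ for the first summand, and apply the transitivity fact of \S\ref{3-7} (Dixmier, Prop.\ 1.7.9(ii)) to see that $\mathfrak{p}$ is semisimple as an $\textup{ad}(\mathfrak{l})$-module. Your extra remark on the finite-dimensional versus finitely semisimple bookkeeping is a correct observation, but otherwise the argument coincides with the paper's.
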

\begin{proof}
Let $\mathfrak{p}\subseteq \mathfrak{g}$ be an $\textup{ad}(\mathfrak{m})$-invariant subspace such that $\mathfrak{g}=\mathfrak{m}\oplus\mathfrak{p}$. By the assumptions, $\mathfrak{m}$ is a finite dimensional semisimple $\textup{ad}(\mathfrak{l})$-module and $\mathfrak{p}$ is a finite dimensional semisimple $\textup{ad}(\mathfrak{m})$-module. It then follows from  \cite[Prop. 1.7.9(ii)]{Di} (see also \S\ref{3-7}) that
$\mathfrak{p}$ is also semisimple as an $\textup{ad}(\mathfrak{l})$-module. Since $\mathfrak{l}$ is reductive in $\mathfrak{m}$, it follows that $\mathfrak{l}$ is also reductive in $\mathfrak{g}$. 
\end{proof}
\subsection{}\label{3-13}
The following lemma is the analog of Lemma \ref{3-8} for reductive extensions.
\begin{lem}
Let $\mathfrak{g}$ be a reductive extension of $\mathfrak{k}$. Let $\phi: \mathfrak{g}\twoheadrightarrow\mathfrak{l}$ be an epimorphism of Lie algebras. Then
$\mathfrak{l}$ is a reductive extension of $\phi(\mathfrak{k})$.
\end{lem}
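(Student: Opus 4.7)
The plan is to build an $\textup{ad}(\phi(\mathfrak{k}))$-invariant complement of $\phi(\mathfrak{k})$ in $\mathfrak{l}$ by pushing forward the given complement in $\mathfrak{g}$ and then trimming away the overlap that is introduced by the kernel of $\phi$. Concretely, fix an $\textup{ad}(\mathfrak{k})$-invariant complement $\mathfrak{p}\subseteq\mathfrak{g}$ with $\mathfrak{g}=\mathfrak{k}\oplus\mathfrak{p}$, so that $\mathfrak{p}\cong\mathfrak{g}/\mathfrak{k}$ is finitely semisimple as $\textup{ad}(\mathfrak{k})$-module. Set $\mathfrak{q}_0:=\phi(\mathfrak{p})\subseteq\mathfrak{l}$; because $\phi$ is a Lie algebra homomorphism, $\mathfrak{q}_0$ is $\textup{ad}(\phi(\mathfrak{k}))$-invariant, and $\mathfrak{l}=\phi(\mathfrak{k})+\mathfrak{q}_0$.

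The next step is a semisimplicity observation. Viewed as a $\mathfrak{k}$-module via $\phi$, $\mathfrak{q}_0$ is an epimorphic image of $\mathfrak{p}$, so it is semisimple as an $\textup{ad}(\mathfrak{k})$-module; but the action of $\mathfrak{k}$ on $\mathfrak{q}_0$ factors through $\phi(\mathfrak{k})$, and a subspace of $\mathfrak{q}_0$ is $\textup{ad}(\mathfrak{k})$-stable if and only if it is $\textup{ad}(\phi(\mathfrak{k}))$-stable. Hence $\mathfrak{q}_0$ is a semisimple $\textup{ad}(\phi(\mathfrak{k}))$-module. The intersection $\mathfrak{n}:=\phi(\mathfrak{k})\cap\mathfrak{q}_0$ is therefore an $\textup{ad}(\phi(\mathfrak{k}))$-submodule of $\mathfrak{q}_0$ and admits an $\textup{ad}(\phi(\mathfrak{k}))$-invariant complement $\mathfrak{q}\subseteq\mathfrak{q}_0$, i.e., $\mathfrak{q}_0=\mathfrak{n}\oplus\mathfrak{q}$.

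I would then verify the direct sum decomposition $\mathfrak{l}=\phi(\mathfrak{k})\oplus\mathfrak{q}$: the sum is all of $\mathfrak{l}$ because $\phi(\mathfrak{k})+\mathfrak{q}\supseteq\phi(\mathfrak{k})+\mathfrak{n}+\mathfrak{q}=\phi(\mathfrak{k})+\mathfrak{q}_0=\mathfrak{l}$, and it is direct because $\phi(\mathfrak{k})\cap\mathfrak{q}\subseteq\phi(\mathfrak{k})\cap\mathfrak{q}_0=\mathfrak{n}$, forcing $\phi(\mathfrak{k})\cap\mathfrak{q}\subseteq\mathfrak{n}\cap\mathfrak{q}=\{0\}$. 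Consequently the inclusion $\phi(\mathfrak{k})\hookrightarrow\mathfrak{l}$ is split in the category of $\textup{ad}(\phi(\mathfrak{k}))$-modules, and the quotient $\mathfrak{l}/\phi(\mathfrak{k})\cong\mathfrak{q}$ is a quotient of the semisimple module $\mathfrak{q}_0$, hence itself semisimple as $\phi(\mathfrak{k})$-module.

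The one step that requires a moment of care is the direct-sum conclusion: naively one might just take $\phi(\mathfrak{p})$ as the complement, which fails when $\ker(\phi)\cap\mathfrak{p}$ is nontrivial or when elements of $\phi(\mathfrak{p})$ are absorbed into $\phi(\mathfrak{k})$. Handling this is precisely the role of the intermediate module $\mathfrak{q}_0$ and Maschke-style splitting off $\mathfrak{n}$; I expect this to be the only non-routine point, and it goes through cleanly because semisimplicity is preserved both under quotients and under the change of acting algebra from $\mathfrak{k}$ to $\phi(\mathfrak{k})$.
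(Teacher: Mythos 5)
Your proof is correct and follows essentially the same route as the paper: both push the $\textup{ad}(\mathfrak{k})$-invariant complement $\mathfrak{p}$ forward along $\phi$ and use the semisimplicity of its image over $\phi(\mathfrak{k})$ to extract an invariant complement of $\phi(\mathfrak{k})$ in $\mathfrak{l}$. The only cosmetic difference is that you invoke the general fact that submodules of semisimple modules are direct summands (splitting off $\mathfrak{n}=\phi(\mathfrak{k})\cap\phi(\mathfrak{p})$), whereas the paper decomposes $\mathfrak{p}$ into irreducibles and selects a subfamily of their images by an explicit induction, which is just a hands-on proof of the same complement property.
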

\begin{proof}
Let $\mathfrak{p}\subseteq\mathfrak{g}$ be an $\textup{ad}(\mathfrak{k})$-invariant subspace such that $\mathfrak{g}=\mathfrak{k}\oplus\mathfrak{p}$. Let
$\mathfrak{p}=\bigoplus_{i=1}^mS_i$ be a decomposition as a direct sum of finite dimensional irreducible $\textup{ad}(\mathfrak{k})$-modules.
Then $\mathfrak{l}=\phi(\mathfrak{k})+\sum_{i=1}^m\phi(S_i)$, and either $\phi(S_i)=\{0\}$ or $\phi(S_i)$ is an irreducible $\textup{ad}(\phi(\mathfrak{k}))$-module. 
A straightforward induction argument then shows that $\mathfrak{l}=\phi(\mathfrak{k})\oplus\bigoplus_{i\in\mathcal{I}}\phi(S_i)$ for some subset $\mathcal{I}\subseteq \{1,\ldots,m\}$. This completes the proof.
\end{proof}

\subsection{}\label{3-14}
Let $\mathfrak{k}\subseteq\mathfrak{g}$ be an inclusion of Lie algebras and 
$\mathfrak{g}$ a reductive extension of $\mathfrak{k}$. Lepowsky and McCollum \cite[Prop. 4.2]{LM} obtained the following criterion to detect whether an irreducible $\mathfrak{g}$-module is finitely semisimple as a $\mathfrak{k}$-module.
\begin{prop}
Let $\mathfrak{g}$ be a reductive extension of $\mathfrak{k}$, and 
$M$ an irreducible $\mathfrak{g}$-module. Then $M$ is finitely semisimple as a $\mathfrak{k}$-module unless $M_\alpha=0$ for all $\alpha\in\mathfrak{k}^\wedge$. 
\end{prop}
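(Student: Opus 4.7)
The plan is to show that
\[
N := \bigoplus_{\alpha \in \mathfrak{k}^\wedge} M_\alpha \subseteq M,
\]
equivalently the sum of all finite-dimensional irreducible $\mathfrak{k}$-submodules of $M$ (see \S\ref{3-3}), is stable under the $\mathfrak{g}$-action. Once this is proven, the irreducibility of $M$ as a $\mathfrak{g}$-module forces $N \in \{\{0\}, M\}$: the former is the exceptional clause $M_\alpha = 0$ for all $\alpha$, while the latter is exactly the finite semisimplicity of $M$ as a $\mathfrak{k}$-module.

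First I would fix an $\textup{ad}(\mathfrak{k})$-invariant complement $\mathfrak{p}$ of $\mathfrak{k}$ in $\mathfrak{g}$, available since $\mathfrak{g}$ is a reductive extension of $\mathfrak{k}$ (\S\ref{3-10}); this $\mathfrak{p}$ is a finite-dimensional semisimple $\mathfrak{k}$-module. Since $N$ is plainly $\mathfrak{k}$-stable, the task reduces to showing $\mathfrak{p}\cdot V \subseteq N$ for every finite-dimensional irreducible $\mathfrak{k}$-submodule $V \subseteq M$. Given such $V$, the subspace $W := V + \mathfrak{p}\cdot V$ is finite-dimensional, and using $[\mathfrak{k},\mathfrak{p}] \subseteq \mathfrak{p}$ together with the Leibniz identity $k\cdot(p\cdot v) = [k,p]\cdot v + p\cdot(k\cdot v)$ one verifies that $W$ is $\mathfrak{k}$-stable. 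If $W$ turns out to be semisimple as a $\mathfrak{k}$-module then $W \subseteq N$ and the proof is complete.

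The main obstacle is precisely this semisimplicity. Decomposing $\mathfrak{p} = \bigoplus_j S_j$ into finite-dimensional irreducible $\textup{ad}(\mathfrak{k})$-submodules gives a $\mathfrak{k}$-equivariant surjection $V \oplus \bigoplus_j (S_j \otimes V) \twoheadrightarrow W$, so it suffices to prove semisimplicity of each tensor product $S_j \otimes V$ of finite-dimensional irreducible $\mathfrak{k}$-modules. One cannot appeal directly to Weyl's theorem, since $\mathfrak{k}$ need not be reductive as a Lie algebra. Instead, I would use the Levi decomposition $\mathfrak{k} = \mathfrak{r} \rtimes \mathfrak{l}$ with $\mathfrak{r}$ solvable and $\mathfrak{l}$ semisimple, together with the standard fact that $[\mathfrak{k},\mathfrak{r}]$ lies in the nilradical of $\mathfrak{k}$ and hence annihilates every finite-dimensional irreducible $\mathfrak{k}$-module. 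Combined with Schur's lemma this forces $\mathfrak{r}$ to act on both $S_j$ and $V$ by characters, and therefore on $S_j \otimes V$ by scalars. Consequently the image of $\mathfrak{k}$ in $\mathfrak{gl}(S_j \otimes V)$ is the sum of central scalars (from $\mathfrak{r}$) and a semisimple quotient of $\mathfrak{l}$: a reductive Lie subalgebra whose center is contained in $\mathbb{C}\cdot\textup{id}$ and therefore acts diagonalisably. Such a subalgebra acts semisimply, giving the desired semisimplicity of $S_j \otimes V$ and completing the sketch; this is essentially the core of Lepowsky and McCollum's argument in \cite{LM}.
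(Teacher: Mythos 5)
The paper itself offers no proof of this proposition: it is quoted from Lepowsky--McCollum \cite[Prop.~4.2]{LM}, so there is no internal argument to compare against, and your sketch is a correct reconstruction of essentially the cited argument. You show that the sum $N=\bigoplus_{\alpha\in\mathfrak{k}^\wedge} M_\alpha$ of all finite-dimensional irreducible $\mathfrak{k}$-submodules is $\mathfrak{g}$-stable by writing $\mathfrak{g}=\mathfrak{k}\oplus\mathfrak{p}$ and reducing, via the $\mathfrak{k}$-equivariant surjection $V\oplus\bigoplus_j(S_j\otimes V)\twoheadrightarrow V+\mathfrak{p}\cdot V$, to the classical fact that a tensor product of finite-dimensional irreducible $\mathfrak{k}$-modules is semisimple; irreducibility of $M$ then gives $N=\{0\}$ or $N=M$, which by \S\ref{3-3} is exactly the dichotomy in the statement. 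One caveat about how you justify the key fact: the largest nilpotent ideal of $\mathfrak{k}$ does not in general annihilate finite-dimensional irreducible modules (take $\mathfrak{k}$ abelian, acting by a nonzero scalar), so the inference ``$[\mathfrak{k},\mathfrak{r}]$ lies in the nilradical, hence annihilates every finite-dimensional irreducible module'' is not valid as written. What is true, and is all you need, is that $[\mathfrak{k},\mathfrak{r}]$ coincides with the nilpotent radical in Bourbaki's sense, i.e.\ the intersection of the kernels of all finite-dimensional irreducible representations; equivalently, by Lie's theorem and the invariance lemma the radical $\mathfrak{r}$ acts on any finite-dimensional irreducible $\mathfrak{k}$-module by a character, after which $[\mathfrak{k},\mathfrak{r}]$ acts by zero. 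With that repair your conclusion stands: $\mathfrak{r}$ acts by scalars on $S_j\otimes V$, the image of $\mathfrak{k}$ in $\mathfrak{gl}(S_j\otimes V)$ is scalars plus a semisimple algebra, Weyl's theorem gives semisimplicity of $S_j\otimes V$ and hence of $W=V+\mathfrak{p}\cdot V$, and the proof is complete.
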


\subsection{}\label{3-15}
Let $\mathfrak{k}\subseteq\mathfrak{g}$ be an inclusion of Lie algebras. Using the associated canonical inclusion $U(\mathfrak{k})\subseteq U(\mathfrak{g})$ of universal enveloping algebras, we set
\[
U(\mathfrak{g})^{\mathfrak{k}}:=C_{U(\mathfrak{g})}\bigl(U(\mathfrak{k})\bigr)
\]
for the centraliser subalgebra of $U(\mathfrak{k})$ in $U(\mathfrak{g})$ (which equals the centraliser of $\mathfrak{k}$ in $U(\mathfrak{g})$).

Let $M$ be a $\mathfrak{g}$-module, and view it as an $U(\mathfrak{g})$-module. The corresponding homomorphism
$U(\mathfrak{g})\rightarrow \textup{End}(M)$
restricts to an algebra map 
\[
U(\mathfrak{g})^{\mathfrak{k}}\rightarrow\textup{End}_{\mathfrak{k}}(M).
\] 
As a consequence, for a $\mathfrak{k}$-module $S$ 
the space $\textup{Hom}_{\mathfrak{k}}(S,M)$
of $\mathfrak{k}$-linear maps $S\rightarrow M$ becomes a left $U(\mathfrak{g})^{\mathfrak{k}}$-module, with $U(\mathfrak{g})^{\mathfrak{k}}$ acting on the codomain $M$.
\subsection{}\label{3-16}
Let $\mathfrak{k}\subseteq\mathfrak{g}$ be an inclusion of Lie algebras. Let $S^{\alpha}$ be a finite dimensional irreducible $\mathfrak{k}$-module of isomorphism class $\alpha\in\mathfrak{k}^\wedge$.
For a $\mathfrak{g}$-module $M$, the space
\[
\textup{Hom}_{\mathfrak{k}}\bigl(S^{\alpha},M)
\]
models the multiplicity space of $S^{\alpha}$ in $M$.
In fact, $\textup{Hom}_{\mathfrak{k}}\bigl(S^{\alpha},M)$ is isomorphic to $\textup{Hom}_{\mathfrak{k}}\bigl(S^{\alpha},M_\alpha)$ as a complex vector space, and the $\mathfrak{k}$-module $M_\alpha$ is isomorphic to an algebraic direct sum of $\textup{dim}(\textup{Hom}_{\mathfrak{k}}(S^\alpha,M))$ copies of $S^{\alpha}$ (see \cite[\S 1.2.8]{Di}). 
In particular, $\textup{Hom}_{\mathfrak{k}}\bigl(S^{\alpha},M)=0$ if and only if $M_\alpha=0$. Hence Proposition \ref{3-14} can be restated as follows:
\begin{prop}
Let $\mathfrak{g}$ be a reductive extension of $\mathfrak{k}$, and $M$ an irreducible $\mathfrak{g}$-module. Then $M$ is finitely semisimple as a $\mathfrak{k}$-module unless $\textup{Hom}_{\mathfrak{k}}(S^\alpha,M)=0$ for all $\alpha\in\mathfrak{k}^\wedge$. 
\end{prop}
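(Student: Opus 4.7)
The plan is essentially to observe that this proposition is a reformulation of Proposition \ref{3-14} once the isotypic component $M_\alpha$ is identified with the multiplicity space $\textup{Hom}_{\mathfrak{k}}(S^\alpha,M)$. So the proof reduces to justifying the equivalence
\[
M_\alpha = 0 \quad\Longleftrightarrow\quad \textup{Hom}_{\mathfrak{k}}(S^\alpha,M)=0
\]
for each $\alpha\in\mathfrak{k}^\wedge$, and then applying Proposition \ref{3-14}.

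First I would recall from \cite[\S 1.2.8]{Di} (which has already been cited in \S\ref{3-3} and \S\ref{3-16}) that the $\alpha$-isotypical component $M_\alpha$ decomposes as an algebraic direct sum of copies of $S^\alpha$, with multiplicity exactly $\dim\textup{Hom}_{\mathfrak{k}}(S^\alpha,M_\alpha)$. Since any $\mathfrak{k}$-linear map $S^\alpha\to M$ necessarily has image contained in the $\alpha$-isotypical component (the image being an irreducible $\mathfrak{k}$-module of class $\alpha$, hence a summand of $M_\alpha$ by semisimplicity of $M_\alpha$), the restriction
\[
\textup{Hom}_{\mathfrak{k}}(S^\alpha,M_\alpha) \longrightarrow \textup{Hom}_{\mathfrak{k}}(S^\alpha,M)
\]
is an isomorphism. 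Consequently $M_\alpha=0$ holds if and only if $\textup{Hom}_{\mathfrak{k}}(S^\alpha,M)=0$.

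With this equivalence in hand, the statement follows immediately: the hypothesis $\textup{Hom}_{\mathfrak{k}}(S^\alpha,M)=0$ for all $\alpha\in\mathfrak{k}^\wedge$ is precisely $M_\alpha=0$ for all $\alpha\in\mathfrak{k}^\wedge$, and Proposition \ref{3-14} asserts that outside of this pathological case (where no finite dimensional irreducible $\mathfrak{k}$-submodule appears in $M$ at all) the irreducible $\mathfrak{g}$-module $M$ is forced to be finitely semisimple as a $\mathfrak{k}$-module.

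There is essentially no obstacle here beyond a bookkeeping check that the Hom-space really does capture the isotypic component; the substantive input (due to Lepowsky--McCollum and Harish-Chandra, combined with the structure of reductive extensions developed in \S\ref{3-10}--\S\ref{3-13}) is contained in Proposition \ref{3-14}. The present proposition is a convenient cohomological restatement that will be more directly applicable to the intertwining spaces appearing later in Section \ref{6}.
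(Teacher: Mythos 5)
Your proposal is correct and matches the paper's own treatment: the paper likewise observes (citing Dixmier, \S 1.2.8) that $\textup{Hom}_{\mathfrak{k}}(S^\alpha,M)\simeq\textup{Hom}_{\mathfrak{k}}(S^\alpha,M_\alpha)$, so that $\textup{Hom}_{\mathfrak{k}}(S^\alpha,M)=0$ if and only if $M_\alpha=0$, and then states the proposition as a direct restatement of Proposition \ref{3-14}. Your extra bookkeeping step, that any $\mathfrak{k}$-linear image of $S^\alpha$ lands in $M_\alpha$, is exactly the justification implicit in the paper.
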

The multiplicity space $\textup{Hom}_{\mathfrak{k}}\bigl(S^{\alpha},M)$ ``remembers'' the $\mathfrak{g}$-action on $M$ through the left $U(\mathfrak{g})^{\mathfrak{k}}$-action from \S\ref{3-15}. Up to isomorphism of $U(\mathfrak{g})^{\mathfrak{k}}$-modules, the multiplicity space $\textup{Hom}_{\mathfrak{k}}(S^{\alpha},M)$ does not depend on the choice of $S^{\alpha}$.

\subsection{}\label{3-17}
Let $\mathfrak{g}$ be a reductive extension of $\mathfrak{k}$ and $\alpha\in\mathfrak{k}^\wedge$ an isomorphism class of a finite dimensional irreducible $\mathfrak{k}$-module. 
Lepowsky and McCollum \cite[Thm. 5.5]{LM} showed that $M\mapsto \textup{Hom}_{\mathfrak{k}}(S^{\alpha},M)$ gives rise to a bijective correspondence between the isomorphism classes of irreducible $\mathfrak{g}$-modules $M$ with $M_\alpha\not=0$ and the isomorphism classes of irreducible modules over the quotient algebra $U(\mathfrak{g})^{\mathfrak{k}}/(U(\mathfrak{g)}^{\mathfrak{k}}\cap U(\mathfrak{g})\mathcal{J}^\alpha)$,
where $\mathcal{J}^\alpha\subseteq U(\mathfrak{k})$ is the annihilator of $S^{\alpha}$ in $U(\mathfrak{k})$. In the context of \S\ref{3-2}, this correspondence goes back to Harish-Chandra
\cite{HC}.

In view of \S\ref{3-16}, we have the following immediate consequence of this correspondence.
\begin{cor}
Let $\mathfrak{g}$ be a reductive extension of $\mathfrak{k}$. Let $M$ be an irreducible $\mathfrak{g}$-module. 

For each $\alpha\in\mathfrak{k}^\wedge$, the multiplicity space $\textup{Hom}_{\mathfrak{k}}(S^{\alpha},M)$ is either $\{0\}$ or it is an irreducible
$U(\mathfrak{g})^{\mathfrak{k}}$-module.
\end{cor}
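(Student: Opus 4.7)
The plan is to derive the corollary directly from the Lepowsky--McCollum correspondence recalled in \S\ref{3-17}, together with the dictionary between isotypical components and multiplicity spaces summarised in \S\ref{3-16}.

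First I would dispose of the trivial alternative: if $M_\alpha=\{0\}$, then \S\ref{3-16} gives $\textup{Hom}_{\mathfrak{k}}(S^\alpha,M)=\{0\}$ and there is nothing to prove. In the complementary case $M_\alpha\neq\{0\}$, equivalently $\textup{Hom}_{\mathfrak{k}}(S^\alpha,M)\neq\{0\}$, the Lepowsky--McCollum theorem applies to the irreducible $\mathfrak{g}$-module $M$ and asserts that $\textup{Hom}_{\mathfrak{k}}(S^\alpha,M)$ is an irreducible module over the quotient algebra
\[
B^\alpha:=U(\mathfrak{g})^{\mathfrak{k}}/\bigl(U(\mathfrak{g})^{\mathfrak{k}}\cap U(\mathfrak{g})\mathcal{J}^\alpha\bigr).
\]

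To upgrade this to irreducibility as a $U(\mathfrak{g})^{\mathfrak{k}}$-module, I would verify that the $B^\alpha$-action on $\textup{Hom}_{\mathfrak{k}}(S^\alpha,M)$ is the one induced by the natural $U(\mathfrak{g})^{\mathfrak{k}}$-action of \S\ref{3-15}, which indeed factors through $B^\alpha$ because the ideal $U(\mathfrak{g})^{\mathfrak{k}}\cap U(\mathfrak{g})\mathcal{J}^\alpha$ acts trivially on the multiplicity space. The latter follows from the observation that the image $\varphi(S^\alpha)$ of any $\varphi\in\textup{Hom}_{\mathfrak{k}}(S^\alpha,M)$ is an irreducible $\mathfrak{k}$-submodule of $M$ of isomorphism class $\alpha$, hence is contained in $M_\alpha$; since $M_\alpha$ is a direct sum of copies of $S^\alpha$ as $\mathfrak{k}$-module, it is annihilated by $\mathcal{J}^\alpha$ by definition of the latter. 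Writing an arbitrary element of $U(\mathfrak{g})^{\mathfrak{k}}\cap U(\mathfrak{g})\mathcal{J}^\alpha$ as $\sum_iv_ij_i$ with $v_i\in U(\mathfrak{g})$ and $j_i\in\mathcal{J}^\alpha$ makes the vanishing of the corresponding action on $\varphi$ immediate.

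Consequently, $U(\mathfrak{g})^{\mathfrak{k}}$-submodules and $B^\alpha$-submodules of $\textup{Hom}_{\mathfrak{k}}(S^\alpha,M)$ coincide, so irreducibility transfers along the quotient map $U(\mathfrak{g})^{\mathfrak{k}}\twoheadrightarrow B^\alpha$. The sole substantive input is the Lepowsky--McCollum correspondence itself; the remainder is formal bookkeeping, requiring no further appeal to the reductive extension hypothesis beyond what is already built into that result, so I do not anticipate any significant obstacle.
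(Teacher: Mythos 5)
Your proof is correct and takes essentially the same route as the paper, which likewise deduces the corollary directly from the Lepowsky--McCollum correspondence of \S\ref{3-17} combined with the equivalence $M_\alpha=0\Leftrightarrow\textup{Hom}_{\mathfrak{k}}(S^{\alpha},M)=0$ from \S\ref{3-16}. Your verification that the ideal $U(\mathfrak{g})^{\mathfrak{k}}\cap U(\mathfrak{g})\mathcal{J}^\alpha$ acts trivially on the multiplicity space (so that irreducibility over the quotient algebra upgrades to irreducibility over $U(\mathfrak{g})^{\mathfrak{k}}$) merely spells out what the paper treats as immediate, and is correct as written.
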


\section{Spin graph functions}\label{4}
In this section we introduce the space $\mathcal{H}=\mathcal{H}_{G,\Gamma,S}$ of spin graph functions and construct spanning sets
of $\mathcal{H}$ using local tensor invariants. Here $G$ is a connected compact Lie group, and $\Gamma=(V,E,s,t)$ is a finite oriented graph with vertices $V=\{v_1,\ldots,v_r\}$, edges $E=\{e_1,\ldots,e_n\}$ and source and target maps
$s,t: E\rightarrow V$. 
\subsection{}\label{4-1}
Let $C(G)$ be the space of continuous complex-valued functions on $G$, viewed as $G^{\times 2}$-representation by the left-and right-regular $G$-action,
\[
\bigl((g_1,g_2)\cdot f\bigr)(g):=f(g_1^{-1}gg_2).
\]
Let $\mathcal{R}(G)\subset C(G)$ be the subalgebra of representative functions on $G$. In other words, $\mathcal{R}(G)$ consists of the functions $f\in C(G)$ which generate a finite dimensional $G^{\times 2}$-subrepresentation of $C(G)$.

\subsection{}\label{4-2}
Let $\pi: G\rightarrow \textup{GL}(M)$ be a finite dimensional continuous $G$-representation, and denote by $\pi^*: G\rightarrow\textup{GL}(M^*)$ its dual representation.
For $m\in M$ and $\phi\in M^*$ we write $c_{\phi,m}^\pi\in C(G)$ for the associated matrix coefficient
\[
c_{\phi,m}^\pi(g):=\phi\bigl(\pi(g)m\bigr)=(\pi^*(g^{-1})\phi)(m)
\]
of $\pi$ (and $\pi^*$). Then $c_{\phi,m}^\pi\in\mathcal{R}(G)$. Moreover, the space $\mathcal{R}^{\pi}(G)$ spanned by the matrix coefficients $c_{\phi,m}^\pi$ ($m\in M$, $\phi\in M^*$) is a $G^{\times 2}$-invariant subspace of $\mathcal{R}(G)$. 
In fact, the map
\[
M^*\otimes M\overset{\sim}{\longrightarrow}\mathcal{R}^{\pi}(G),\qquad \phi\otimes m\mapsto c_{\phi,m}^\pi
\]
is an isomorphism of $G^{\times 2}$-representations, where $M^*\otimes M$ is endowed with the natural tensor product action of $G^{\times 2}$.
\subsection{}\label{4-3}
Let $G^\wedge$ be the set of isomorphism classes of irreducible finite dimensional continuous $G$-representations. We denote the isomorphism class of an irreducible finite dimensional $G$-representation simply by its representation map $\pi$. The associated representation space is then denoted by $M^\pi$. 

The Peter-Weyl theorem yields the decomposition
\[
\mathcal{R}(G)=\bigoplus_{\pi\in G^\wedge}\mathcal{R}^{\pi}(G)
\]
of $\mathcal{R}(G)$ in irreducible $G^{\times 2}$-subrepresentations.

\subsection{}\label{4-4}
For two $G$-representations $M$ and $N$ we identify $M^*\otimes N^*\simeq (M\otimes N)^*$ as $G^{\times 2}$-representations, where $M\otimes N$ and $M^*\otimes N^*$ are endowed with the tensor product $G^{\times 2}$-action. Under this correspondence, $\phi\otimes\psi$ for $\phi\in M^*$ and $\psi\in N^*$ corresponds to the linear functional
on $M\otimes N$ satisfying $m\otimes n\mapsto \phi(m)\psi(n)$ for $m\in M$ and $n\in N$. In particular, if $\{m_i\}_i$ and $\{n_j\}_j$ are bases of $M$ and $N$ and $\{m_i^*\}_i$ and $\{n_j^*\}_j$ are the respective dual bases of $M^*$ and $N^*$, then the basis $\{m_i\otimes n_j\}_{i,j}$ of $M\otimes N$ has dual basis $\{m_i^*\otimes n_j^*\}_{i,j}$.

\subsection{}\label{4-5}
We write $G^E$ for the compact product Lie group $G^{E}$. Its elements are denoted either by $\bm{g}=(g_e)_{e\in E}$ 
or by $\bm{g}=(g_1,\ldots,g_n)$,
with $g_j=g_{e_j}$ the group element attached to the edge $e_j$. The group $G^E$ is called the group of {\it graph $G$-connections on $\Gamma$}.

\subsection{}\label{4-6}
For each vertex $v\in V$ we fix a Lie subgroup $K_v\subseteq G$. It will play the role as {\it local gauge group} at the vertex $v$. The product group
$\mathbf{K}:=\prod_{v\in V}K_v$ is the associated {\it gauge group}. It is a subgroup of the group $G^V$ of lattice gauge transformations.

A group element in $\mathbf{K}$ is denoted by $\bm{k}=(k_v)_{v\in V}$ with $k_v\in K_v$. 
We will sometimes write $\bm{k}=(k_1,\ldots,k_r)$ with $k_i=k_{v_i}$.
The {\it gauge action} of $\mathbf{K}$ on $G^E$ is defined by
\[
\bm{k}\cdot\bm{g}:=\bigl(k_{s(e)}g_ek_{t(e)}^{-1}\bigr)_{e\in E}\qquad\qquad \bigl(\bm{k}\in\mathbf{K},\,\,\bm{g}\in G^E\bigr).
\]
\subsection{}\label{4-7}
Let $\sigma:\mathbf{K}\rightarrow\textup{GL}(S)$ be a finite dimensional representation. The space of global algebraic sections of the associated vector bundle over 
$G^E/\mathbf{K}$ is denoted by $\mathcal{H}=\mathcal{H}_{\Gamma,G,S}$. Concretely, $\mathcal{H}$ is the space 
\[
\bigl(\mathcal{R}(G^E)\otimes S\bigr)^{\mathbf{K}}
\]
of $\mathbf{K}$-invariant $S$-valued representative functions $f$ on $G^E$, relative to the $\mathbf{K}$-action
\[
\bigl(\bm{k}\cdot f)(\bm{g}):=\sigma(\bm{k})f(\bm{k}^{-1}\cdot\bm{g})
\]
on $\mathcal{R}(G^E)\otimes S$.
In other words, $\mathcal{H}$ consists of the $\mathbf{S}$-valued representative functions $f$ on $G^E$
satisfying the equivariance property
\[
f(\bm{k}\cdot \bm{g})=\sigma(\bm{k})f(\bm{g})
\]
for $\bm{k}\in\mathbf{K}$ and $\bm{g}\in G^E$. 
We call functions $f\in\mathcal{H}$ {\it spin graph functions} (spin refers to the interpretation of $S$ as spin space for the associated quantum spin system, see \S\ref{5-9},
\S\ref{5-12} and \S\ref{5-13}).

\subsection{}\label{4-8}

Fix $\pi\in (G^E)^\wedge$ an isomorphism class of a finite dimensional irreducible representation of $G^E$ and fix a finite dimensional representation $\sigma:\mathbf{K}\rightarrow\textup{GL}(S)$. Then the subspace 
\[
\mathcal{R}^{\pi}(G^E)\otimes S\subseteq\mathcal{R}(G^E)\otimes S
\]
of {\it $\pi$-elementary spin graph functions} is $\mathbf{K}$-invariant. 

We call a spin graph function $f\in\mathcal{H}$
{\it elementary} if $f$ is $\pi$-elementary for some
$\pi\in (G^E)^\wedge$. We denote 
\[
\mathcal{H}^\pi=\mathcal{H}^\pi_{\Gamma,G,S}
\] 
for the space $(\mathcal{R}^\pi(G^E)\otimes S)^{\mathbf{K}}$ of $\pi$-elementary spin graph functions.

Note that the elementary spin graph functions span $\mathcal{H}$, since 
\[
(\mathcal{R}(G^E)\otimes S)^{\mathbf{K}}=\bigoplus_{\pi\in (G^E)^\wedge}(\mathcal{R}^\pi(G^E)\otimes S)^{\mathbf{K}}
\]
by the Peter-Weyl theorem (see \S\ref{4-3}). 

\subsection{}\label{4-9}
Let $\pi_{e_j}=\pi_j: G\rightarrow\textup{GL}(M_j)$ be finite dimensional $G$-representations, attached to the edges of $\Gamma$. The associated tensor product representation $\bm{\pi}: G^E\rightarrow\textup{GL}(\mathbf{M})$ is defined by
\[
\bm{\pi}(\bm{g}):=\pi_1(g_1)\otimes\cdots\otimes\pi_n(g_n), 
\]
where $\mathbf{M}:=M_1\otimes\cdots\otimes M_n$.
It is convenient to think of the local representations $\pi_e$ ($e\in E$) as a choice of coloring of the
edges of $\Gamma$. 

We will identify $\mathbf{M}^*\simeq M_1^*\otimes\cdots\otimes M_n^*$ as in \S\ref{4-9}. In particular, for pure tensors $\bm{m}:=m_1\otimes\cdots\otimes m_n\in\mathbf{M}$
and $\bm{\phi}:=\phi_1\otimes\cdots\otimes \phi_n\in M_1^*\otimes\cdots\otimes M_n^*$, the matrix coefficient $c_{\bm{\phi},\bm{m}}^{\bm{\pi}}$ of $\mathbf{M}$ is 
\[
c_{\bm{\phi},\bm{m}}^{\bm{\pi}}(\bm{g})=c_{\phi_1,m_1}^{\pi_1}(g_1)\cdots c_{\phi_n,m_n}^{\pi_n}(g_n),\qquad\quad\bm{g}\in\mathbf{G}.
\]

When the $\pi_j: G\rightarrow\textup{GL}(M^{\pi_j})$ are all irreducible, then
$\bm{\pi}$ is irreducible and its representation space will be denoted by $\mathbf{M}^{\bm{\pi}}$. 
The assigment $(\pi_e)_{e\in E}\mapsto \bm{\pi}$
induces a bijection 
\[
(G^\wedge)^{\times n}\overset{\sim}{\longrightarrow} (G^E)^\wedge.
\]
If $\pi\in (G^E)^\wedge$ then we call the $\pi_{e}\in G^\wedge$ such that
$\pi\simeq \bm{\pi}$ the {\it local components of $\pi$}. 
The local components of $\pi^*$ are $\pi_e^*$.

\subsection{}\label{4-10}
Similarly we denote tensor product representations of the gauge group $\mathbf{K}=\prod_{v\in V}K_v$ by  
 $\bm{\sigma}: \mathbf{K}\rightarrow\textup{GL}(\mathbf{S})$ with
\[
\bm{\sigma}(\bm{k}):=\sigma_1(k_1)\otimes\cdots\otimes\sigma_r(k_r)
\]
and $\mathbf{S}:=S_1\otimes\cdots\otimes S_r$, where $\sigma_{v_i}=\sigma_i: K_{v_i}\rightarrow\textup{GL}(S_i)$ are finite dimensional representations of the local gauge groups $K_{v_i}$. We now think of the local representations $\sigma_v$ ($v\in V$) as a choice of coloring of the vertices of $\Gamma$. 

If $\sigma: \mathbf{K}\rightarrow\textup{GL}(S)$ is a finite dimensional irreducible representation then $\sigma$ is isomorphic to a tensor product representation with
finite dimensional irreducible local $K_v$-repre\-sen\-tations $\sigma_v: K_v\rightarrow\textup{GL}(S_v)$.

\subsection{}\label{4-11}
The product group $G^E\times G^E$ acts on $\mathcal{R}(G)^{\otimes n}$ by 
\[
(\bm{g}^\prime,\bm{g}^{\prime\prime})\cdot (f_1\otimes\cdots\otimes f_n):=(g_1^\prime,g_1^{\prime\prime})\cdot f_1\otimes\cdots\otimes
(g_n^\prime,g_n^{\prime\prime})\cdot f_n,
\]
where $\bm{g}^\prime=(g_1^\prime,\ldots,g_n^\prime)\in G^E$ and $\bm{g}^{\prime\prime}=(g_1^{\prime\prime},\ldots,g_n^{\prime\prime})\in G^E$.
The linear isomorphism
\begin{equation}\label{isoFUN}
\begin{split}
&\mathcal{R}(G)^{\otimes n}\overset{\sim}{\longrightarrow}\mathcal{R}(G^E)\\
&f_1\otimes\cdots\otimes f_n\mapsto\bm{f},
\end{split}
\end{equation}
with $\bm{f}\in\mathcal{R}(G^E)$ defined by 
\[
\bm{f}(\bm{g}):=f_1(g_1)\cdots f_n(g_n),
\] 
intertwines the $G^E\times G^E$-actions.

\subsection{}\label{4-12}
For $\pi_j\in G^\wedge$ ($1\leq j\leq n$) the isomorphism \eqref{isoFUN} restricts to an isomorphism
\begin{equation*}
\mathcal{R}^{\pi_1}(G)\otimes\cdots\otimes\mathcal{R}^{\pi_n}(G)\overset{\sim}{\longrightarrow} \mathcal{R}^{\bm{\pi}}(G^E).
\end{equation*}
It maps $c_{\phi_1,m_1}^{\pi_1}\otimes\cdots\otimes c_{\phi_n,m_n}^{\pi_n}$ to 
$c_{\bm{\phi},\bm{m}}^{\bm{\pi}}$,
where $\bm{\phi}:=\phi_1\otimes\cdots\otimes\phi_n$ and $\bm{m}:=m_1\otimes\cdots\otimes m_n$, cf. \S\ref{4-9}.

\subsection{}\label{4-13}

The {\it star} $\mathcal{S}(v)$ of $v\in V$ is the set of edges $e\in E$ with source and/or target equal to $v$.
Then
\[
\mathcal{S}(v)=\mathcal{S}(v|s)\cup\mathcal{S}(v|t)
\]
where $\mathcal{S}(v|s)$ is the set of edges oriented outward of $v$ and 
$\mathcal{S}(v|t)$ the set of edges oriented toward $v$. Note that the union may not be disjoint since we allow loops in the graph $\Gamma$.

We consider the $K_v$-representation
\[
\pi_{\mathcal{S}(v)}: K_v\rightarrow \textup{GL}(\mathbf{M}^{\pi_{\mathcal{S}(v)}})
\]
with 
\[
\mathbf{M}^{\pi_{\mathcal{S}(v)}}:=\Bigl(\bigotimes_{e\in\mathcal{S}(v|s)}M^{\pi_e^*}\Bigr)\otimes\Bigl(\bigotimes_{e\in\mathcal{S}(v|t)}M^{\pi_e}\Bigr)
\]
and $K_v$ acting diagonally,
\[
\pi_{\mathcal{S}(v)}(k_v):=\Bigl(\bigotimes_{e\in\mathcal{S}(v|s)}\pi_e^*(k_v)\Bigr)\otimes\Bigl(\bigotimes_{e\in\mathcal{S}(v|t)}\pi_e(k_v)\Bigr).
\]
Here the tensor factors are ordered using the total order on $\mathcal{S}(v|s)$ and $\mathcal{S}(v|t)$ induced from the total order on $E$. 

We will identify the dual $K_v$-representation $\bigl(\mathbf{M}^{\pi_{\mathcal{S}(v)}}\bigr)^*$ with 
\[
\Bigl(\bigotimes_{e\in\mathcal{S}(v|s)}M^{\pi_e}\Bigr)\otimes\Bigl(\bigotimes_{e\in\mathcal{S}(v|t)}M^{\pi_e^*}\Bigr),
\]
with $K_v$ acting diagonally, cf. \S\ref{4-4}.
\subsection{}\label{4-14}

Fix finite dimensional $K_v$-representations $\sigma_v: K_v\rightarrow\textup{GL}(S_v)$ for $v\in V$ and finite dimensional $G$-representations $\pi_e: G\rightarrow\textup{GL}(M_e)$ for $e\in E$, thus providing the vertices and the edges of $\Gamma$ with representation colors.

At vertex $v\in V$ we assign to the colored graph $\Gamma$ the $K_v$-representation
\[
\mathbf{M}^{\pi_{\mathcal{S}(v)}}\otimes S_v,
\]
with $K_v$ acting diagonally, and we endow 
\[
\bigotimes_{v\in V}\bigl(\mathbf{M}^{\pi_{\mathcal{S}(v)}}\otimes S_v\bigr)
\]
with the tensor product action of the gauge group $\mathbf{K}=\prod_{v\in V}K_v$. Concretely, 
\[
\bm{k}\cdot\bigotimes_{v\in V}\bigl(C_v\otimes u_v\bigr):=\bigotimes_{v\in V}\bigl(\pi_{\mathcal{S}(v)}(k_v)C_v\otimes\sigma_v(k_v)u_v\bigr)
\]
for $\bm{k}=(k_v)_{v\in V}\in\mathbf{K}$, $C_v\in \mathbf{M}^{\pi_{\mathcal{S}(v)}}$ and $u_v\in S_v$.

\subsection{}\label{4-15}
For $\pi_e\in G^\wedge$ and $\sigma_v: K_v\rightarrow\textup{GL}(S_v)$ finite dimensional $K_v$-representations,
consider the linear map
\[
\Psi^{\bm{\pi}}:\, \bigotimes_{v\in V}\bigl(\mathbf{M}^{\pi_{\mathcal{S}(v)}}\otimes S_v\bigr)\rightarrow\mathcal{R}^{\bm{\pi}}(G^E)\otimes\mathbf{S}
\]
defined on pure tensors by
\begin{equation}\label{Psipure}
\Psi^{\bm{\pi}}\Bigl(\bigotimes_{v\in V}\Bigl(\Bigl(\bigotimes_{e\in\mathcal{S}(v|s)}\phi_e\Bigr)\otimes\Bigl(\bigotimes_{e\in\mathcal{S}(v|t)}m_e\Bigr)\otimes u_v\Bigr)\Bigr):=
c_{\bm{\phi},\bm{m}}^{\bm{\pi}}\otimes\bm{u}
\end{equation}
with $\bm{\phi}:=\bigotimes_{e\in E}\phi_e$, $\bm{m}:=\bigotimes_{e\in E}m_e$ and $\bm{u}:=\bigotimes_{v\in V}u_v$. This is well defined due to the disjoint union decompositions
\[
\bigsqcup_{v\in V}\mathcal{S}(v|s)=E=\bigsqcup_{v\in V}\mathcal{S}(v|t)
\]
of the edge set $E$ of $\Gamma$.

\begin{thm}
The linear map $\Psi^{\bm{\pi}}$ defines a $\mathbf{K}$-linear isomorphism
\[
\Psi^{\bm{\pi}}:\, \bigotimes_{v\in V}\bigl(\mathbf{M}^{\pi_{\mathcal{S}(v)}}\otimes S_v\bigr)\overset{\sim}{\longrightarrow}
\mathcal{R}^{\bm{\pi}}(G^E)\otimes\mathbf{S}.
\]
It restricts to a linear isomorphism
\[
\Psi^{\bm{\pi}}:\, \bigotimes_{v\in V}\bigl(\mathbf{M}^{\pi_{\mathcal{S}(v)}}\otimes S_v\bigr)^{K_v}\overset{\sim}{\longrightarrow}
\bigl(\mathcal{R}^{\bm{\pi}}(G^E)\otimes\mathbf{S}\bigr)^{\mathbf{K}}.
\]
\end{thm}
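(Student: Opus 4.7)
The plan is to recognise $\Psi^{\bm{\pi}}$ as the composition of three known isomorphisms, then deduce the statements about $\mathbf{K}$-equivariance and restriction to invariants by direct comparison on pure tensors. The input vector space
$\bigotimes_{v\in V}\bigl(\mathbf{M}^{\pi_{\mathcal{S}(v)}}\otimes S_v\bigr)$
is, by definition of $\mathbf{M}^{\pi_{\mathcal{S}(v)}}$, an iterated tensor product over the pairs $(v,e)$ with $v\in V$ and $e\in\mathcal{S}(v|s)\cup\mathcal{S}(v|t)$, together with one factor $S_v$ for each $v$. By the disjoint union decompositions $\bigsqcup_v\mathcal{S}(v|s)=E=\bigsqcup_v\mathcal{S}(v|t)$ noted in \S\ref{4-15}, this reindexes canonically as $\bigotimes_{e\in E}\bigl(M^{\pi_e^*}\otimes M^{\pi_e}\bigr)\otimes\bigotimes_{v\in V}S_v$.

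\textbf{The three isomorphisms.} First, edge-by-edge apply the matrix coefficient isomorphism $M^{\pi_e^*}\otimes M^{\pi_e}\cong\mathcal{R}^{\pi_e}(G)$ from \S\ref{4-2} to obtain $\bigotimes_{e\in E}\mathcal{R}^{\pi_e}(G)\otimes\mathbf{S}$. Next, invoke the restriction of the isomorphism \eqref{isoFUN} recorded in \S\ref{4-12}, i.e.\ $\mathcal{R}^{\pi_1}(G)\otimes\cdots\otimes\mathcal{R}^{\pi_n}(G)\overset{\sim}{\longrightarrow}\mathcal{R}^{\bm{\pi}}(G^E)$ sending $c_{\phi_1,m_1}^{\pi_1}\otimes\cdots\otimes c_{\phi_n,m_n}^{\pi_n}$ to $c_{\bm{\phi},\bm{m}}^{\bm{\pi}}$. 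Composing, one lands in $\mathcal{R}^{\bm{\pi}}(G^E)\otimes\mathbf{S}$, and inspection on pure tensors shows that the composite agrees with the formula \eqref{Psipure} defining $\Psi^{\bm{\pi}}$. Since each step is a linear isomorphism, so is $\Psi^{\bm{\pi}}$.

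\textbf{$\mathbf{K}$-equivariance.} I verify this on a pure tensor. On the source, $\bm{k}=(k_v)_{v\in V}$ acts as $\sigma_v(k_v)$ on $S_v$, as $\pi_e^*(k_v)$ on the slot labelled by $e\in\mathcal{S}(v|s)$, and as $\pi_e(k_v)$ on the slot labelled by $e\in\mathcal{S}(v|t)$. Since $e\in\mathcal{S}(v|s)$ means $s(e)=v$ and $e\in\mathcal{S}(v|t)$ means $t(e)=v$, applying $\Psi^{\bm{\pi}}$ to the transformed tensor yields $c_{\bm{\phi}',\bm{m}'}^{\bm{\pi}}\otimes\bm{\sigma}(\bm{k})\bm{u}$ where $\phi_e'=\pi_e^*(k_{s(e)})\phi_e$ and $m_e'=\pi_e(k_{t(e)})m_e$. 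On the target side, using $(\bm{k}^{-1}\!\cdot\!\bm{g})_e=k_{s(e)}^{-1}g_ek_{t(e)}$, a direct computation using $\phi(\pi(g)m)=c_{\phi,m}^\pi(g)$ shows that $(\bm{k}\cdot c_{\bm{\phi},\bm{m}}^{\bm{\pi}})(\bm{g})$ produces exactly the same matrix coefficient $c_{\bm{\phi}',\bm{m}'}^{\bm{\pi}}(\bm{g})$ and the same $\bm{\sigma}(\bm{k})\bm{u}$. Hence $\Psi^{\bm{\pi}}$ is $\mathbf{K}$-equivariant.

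\textbf{Restriction to invariants.} Because $\mathbf{K}=\prod_{v\in V}K_v$ and each $K_v$ acts on the source only through the single tensor factor $\mathbf{M}^{\pi_{\mathcal{S}(v)}}\otimes S_v$, the standard identification
\[
\Bigl(\bigotimes_{v\in V}W_v\Bigr)^{\mathbf{K}}=\bigotimes_{v\in V}W_v^{K_v}
\]
applied to $W_v=\mathbf{M}^{\pi_{\mathcal{S}(v)}}\otimes S_v$ shows that the source $\mathbf{K}$-invariants are $\bigotimes_v(\mathbf{M}^{\pi_{\mathcal{S}(v)}}\otimes S_v)^{K_v}$. A $\mathbf{K}$-equivariant linear isomorphism restricts to an isomorphism on $\mathbf{K}$-invariants, yielding the second claim. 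The only real bookkeeping obstacle is the reindexing in the first paragraph: one must keep the total order on $E$ consistent with the induced orders on the various $\mathcal{S}(v|s)$, $\mathcal{S}(v|t)$ so that the matrix coefficient formula assembled from the stars equals $c_{\bm{\phi},\bm{m}}^{\bm{\pi}}$ with the edges read in their natural order, but this is guaranteed by the conventions of \S\ref{4-13}.
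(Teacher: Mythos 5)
Your proposal is correct and follows essentially the same route as the paper: the bijectivity comes from the edge-wise matrix-coefficient identification together with the isomorphism of \S\ref{4-12} (after the reindexing given by the disjoint unions $\bigsqcup_v\mathcal{S}(v|s)=E=\bigsqcup_v\mathcal{S}(v|t)$), the $\mathbf{K}$-equivariance is checked on pure tensors by the same matrix-coefficient computation, and the statement on invariants follows because the source carries the factor-wise tensor product action of $\mathbf{K}=\prod_v K_v$. Your write-up merely makes explicit two points the paper leaves implicit (the reindexing and the identity $(\bigotimes_v W_v)^{\mathbf{K}}=\bigotimes_v W_v^{K_v}$), so no substantive difference.
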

\begin{proof}
By \S\ref{4-12} it is clear that $\Psi^{\bm{\pi}}$ is a linear isomorphism. It is $\mathbf{K}$-linear, since for $\bm{k}=(k_v)_{v\in V}\in\mathbf{K}$ and $\bm{g}\in G^E$,
\begin{equation*}
\begin{split}
\Psi^{\bm{\pi}}\Bigl(\bigotimes_{v\in V}&\Bigl(\Bigl(\bigotimes_{e\in\mathcal{S}(v|s)}\phi_e\pi_e(k_v^{-1})\Bigr)\otimes\Bigl(\bigotimes_{e\in\mathcal{S}(v|t)}\pi_e(k_v)m_e\Bigr)\otimes 
\sigma_v(k_v)u_v\Bigr)\Bigr)(\bm{g})=\\
&\qquad\qquad\quad=\Bigl(\prod_{e\in E}\phi_e\bigl(\pi_e(k_{s(e)}^{-1}g_ek_{t(e)})m_e\bigr)\Bigr)\bm{\sigma}(\bm{k})\bm{u}\\
&\qquad\qquad\quad=
c_{\bm{\phi},\bm{m}}^{\bm{\pi}}(\bm{k}^{-1}\cdot\bm{g})\bm{\sigma}(\bm{k})\bm{u}.
\end{split}
\end{equation*}
The second statement of the theorem follows now immediately from the fact that $\bigotimes_{v\in V}\bigl(\mathbf{M}^{\pi_{\mathcal{S}(v)}}\otimes S_v\bigr)$ is endowed with 
the tensor product action of $\mathbf{K}=\prod_{v\in V}K_v$, see \S\ref{4-14}.
\end{proof}

\subsection{}\label{4-16}
We keep the setup as in \S\ref{4-15}.
For $e\in E$ let $\{m_{e,i_e}\}_{i_e\in\mathcal{I}_e}$ be a basis of $M^{\pi_e}$. For
$v\in V$ set
\[
\mathcal{I}(v\vert s):=\{\bm{i}=(i_e)_{e\in\mathcal{S}(v|s)} \,\, | \,\, i_e\in \mathcal{I}_e\}
\]
and write for $\bm{i}\in \mathcal{I}(v|s)$,
\begin{equation*}
\bm{m}_{\bm{i}}(v|s):=\bigotimes_{e\in\mathcal{S}(v|s)}m_{e,i_e}.
\end{equation*}
In a similar way we define $\bm{m}_{\bm{i}}(v|t)$ for indices $\bm{i}$ from 
\[
\mathcal{I}(v|t):=\{\bm{i}=(i_e)_{e\in\mathcal{S}(v|t)}\,\, | \,\, i_e\in\mathcal{I}_e\}.
\]
Then 
\begin{equation}\label{basis}
\{\,\bm{m}_{\bm{i}}(v|s)^*\otimes\bm{m}_{\bm{j}}(v|t)\,\,\,\,\,| \,\,\,\,\, (\bm{i},\bm{j})\in\mathcal{I}(v|s)\times \mathcal{I}(v|t)\,\}
\end{equation}
is a basis of $\mathbf{M}^{\pi_{\mathcal{S}(v)}}$.
We then have a linear isomorphism 
\begin{equation}\label{Kiso}
\textup{Hom}_{K_v}\bigl((\mathbf{M}^{\pi_{\mathcal{S}(v)}})^*,S_v\bigr)
\overset{\sim}{\longrightarrow}
\bigl(\mathbf{M}^{\pi_{\mathcal{S}(v)}}\otimes S_v\bigr)^{K_v}
\end{equation}
mapping $\Phi\in\textup{Hom}_{K_v}\bigl((\mathbf{M}^{\pi_{\mathcal{S}(v)}})^*,S_v\bigr)$ to the invariant tensor
\[
\sum_{\bm{i}\in\mathcal{I}(v|s)}\sum_{\bm{j}\in\mathcal{I}(v|t)}\bm{m}_{\bm{i}}(v|s)^*\otimes\bm{m}_{\bm{j}}(v|t)\otimes\Phi\bigl(\bm{m}_{\bm{i}}(v|s)\otimes \bm{m}_{\bm{j}}(v|t)^*\bigr).
\] 

Combined with Theorem \ref{4-15} we thus obtain a parametrisation of the space
$\bigl(\mathcal{R}^{\bm{\pi}}(G^E)\otimes\mathbf{S}\bigr)^{\mathbf{K}}$ of ${\bm{\pi}}$-elementary spin graph functions in terms of spaces of local intertwiners (local in the sense that they only depend on the colors of $\Gamma$ at the star of a vertex $v$).

\subsection{}\label{4-17}
Let $n\geq 1$ and consider the directed cycle graph $\Gamma$ with $n$ edges.
We enumerate the vertices $v_i$ and edges $e_j$ ($i,j\in\mathbb{Z}/n\mathbb{Z}$) in such a way that $s(e_i)=v_{i}$ and $t(e_i)=v_{i+1}$ for $i\in\mathbb{Z}/n\mathbb{Z}$.
The order $1<2<\cdots<n$ on $\mathbb{Z}/n\mathbb{Z}$ provides a total order on $V$ and $E$.
We take $\mathbf{K}=G^V$ as gauge group.

With these conventions $G^E\simeq G^{\times n}$ by $(g_e)_{e\in E}\mapsto (g_{e_1},\ldots,g_{e_n})$, and
  $\mathbf{K}\simeq G^{\times n}$ by
 $(k_v)_{v\in V}\mapsto (k_{v_1},\ldots,k_{v_n})$.
The left gauge action of $\mathbf{K}$ on $G^E$ then reads as
\[
\bm{k}\cdot \bm{g}:=(k_1g_1k_2^{-1},k_2g_2k_3^{-1},\ldots, k_ng_nk_1^{-1})
\]
for $\bm{k}=(k_1,\ldots,k_n)\in G^{\times n}\simeq\mathbf{K}$ and $\bm{g}=(g_1,\ldots,g_n)\in G^{\times n}\simeq G^E$.

Let $\sigma_i: G\rightarrow\textup{GL}(S_i)$ be finite dimensional $K_{v_i}=G$-representations attached to the vertices $v_i$,
and $\pi_i: G\rightarrow\textup{GL}(M^{\pi_i})$ finite dimensional irreducible $G$-representation attached to the edge $e_i$. 

The partial trace $\textup{Tr}_{M^{\pi_n}}^{\mathbf{S}}(B)$ of $B\in \textup{Hom}(M^{\pi_n},M^{\pi_n}\otimes\mathbf{S})$ is the unique vector in $\mathbf{S}$ satisfying
\[
\phi\bigl(\textup{Tr}_{M^{\pi_n}}^{\mathbf{S}}(B)\bigr)
=\textup{Tr}_{M^{\pi_n}}\bigl((\textup{id}_{M^{\pi_n}}\otimes\phi)B\bigr)\qquad\quad \forall\, \phi\in\mathbf{S}^*,
\]
where $\textup{Tr}_{M^{\pi_n}}$ is the usual trace on $\textup{End}(M^{\pi_n})$. The elementary spin graph functions now have the following description in terms of partial traces of compositions of intertwiners.

\begin{prop}
Endow $M^{\pi_{i-1}}\otimes S_i$
with the diagonal $G$-action. We have a linear isomorphism
\begin{equation*}
\begin{split}
\bigotimes_{i\in\mathbb{Z}/n\mathbb{Z}}\textup{Hom}_G\bigl(M^{\pi_i},M^{\pi_{i-1}}\otimes\mathbf{S}_i\bigr)&\overset{\sim}{\longrightarrow}
\bigl(\mathcal{R}^{\bm{\pi}}(G^{\times n})\otimes\mathbf{S}\bigr)^{\mathbf{K}},\\
\bigotimes_{i\in\mathbb{Z}/n\mathbb{Z}}\Phi_i&\mapsto f_{\bm{\Phi}}^{\bm{\pi}}
\end{split}
\end{equation*}
with $\bm{\Phi}:=(\Phi_1,\ldots,\Phi_n)$ and $f_{\bm{\Phi}}^{\bm{\pi}}\in\bigl(\mathcal{R}^\pi(G^{\times n})\otimes\mathbf{S}\bigr)^{\mathbf{K}}$ the {\it elementary $n$-point trace function}
\[
f_{\bm{\Phi}}^{\bm{\pi}}(\bm{g}):=\textup{Tr}_{M^{\pi_n}}^{\mathbf{S}}\Bigl((\Phi_1\pi_1(g_1)\otimes\textup{id}_{\mathbf{S}_2\otimes\cdots\otimes\mathbf{S}_n})\cdots
(\Phi_{n-1}\pi_{n-1}(g_{n-1})\otimes\textup{id}_{\mathbf{S}_n})\Phi_n\pi_n(g_n)\Bigr).
\]
\end{prop}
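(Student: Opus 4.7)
The plan is to specialise Theorem \ref{4-15} to the directed cycle graph, translate the local invariants at each vertex into spaces of $G$-intertwiners via the standard hom/tensor adjunction, and then match the resulting spanning set with the partial-trace formula by a basis calculation.

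First I would read off the star data. Since $s(e_i)=v_i$ and $t(e_i)=v_{i+1}$, one has $\mathcal{S}(v_i|s)=\{e_i\}$ and $\mathcal{S}(v_i|t)=\{e_{i-1}\}$ (indices taken modulo $n$), so the local $K_{v_i}=G$-module at $v_i$ equals
\[
\mathbf{M}^{\pi_{\mathcal{S}(v_i)}}\otimes S_i = (M^{\pi_i})^{*}\otimes M^{\pi_{i-1}}\otimes S_i,
\]
with $G$ acting diagonally by $\pi_i^{*}\otimes\pi_{i-1}\otimes\sigma_i$. The standard adjunction gives a $G$-linear isomorphism
\[
\bigl((M^{\pi_i})^{*}\otimes M^{\pi_{i-1}}\otimes S_i\bigr)^{G}\;\cong\;\textup{Hom}_{G}\bigl(M^{\pi_i},M^{\pi_{i-1}}\otimes S_i\bigr),
\]
and combining these at each vertex with Theorem \ref{4-15} produces the asserted linear isomorphism.

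It remains to show that the image of $\bigotimes_{i}\Phi_i$ under this composition coincides with $f^{\bm{\pi}}_{\bm{\Phi}}$. Fixing bases $\{m_{i,a}\}$ of $M^{\pi_i}$ and writing $\Phi_i(m_{i,c})=\sum_{b,\alpha}C^{i}_{c;b,\alpha}\,m_{i-1,b}\otimes s_{i,\alpha}$, the invariant tensor at $v_i$ attached to $\Phi_i$ (via the isomorphism in \S\ref{4-16}) is $\sum_{a,b,\alpha}C^{i}_{a;b,\alpha}\,m_{i,a}^{*}\otimes m_{i-1,b}\otimes s_{i,\alpha}$. Unpacking the definition of $\Psi^{\bm{\pi}}$ and using that each edge $e_i$ appears once through $v_i\in\mathcal{S}(v_i|s)$ (contributing the covector label $a_i$) and once through $v_{i+1}\in\mathcal{S}(v_{i+1}|t)$ (contributing the vector label $b_{i+1}$), the image of $\bigotimes_i\Phi_i$ evaluated at $\bm{g}$ is
\[
\sum_{\bm{a},\bm{b},\bm{\alpha}}\Bigl(\prod_{i=1}^{n}m_{i,a_i}^{*}\bigl(\pi_i(g_i)m_{i,b_{i+1}}\bigr)\,C^{i}_{a_i;b_i,\alpha_i}\Bigr)\,\bigotimes_{i}s_{i,\alpha_i},
\]
with the cyclic identification $b_{n+1}=b_{1}$ forced by the double edge-labelling. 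On the other hand, iterating the operators $\Phi_i\pi_i(g_i)$ and taking the partial trace over $M^{\pi_n}$ produces precisely the same cyclic sum, because the partial trace identifies the output index of $\Phi_1\pi_1(g_1)$ with the input index of $\Phi_n\pi_n(g_n)$.

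The main obstacle is the combinatorial verification that the cyclic contraction $b_{n+1}=b_{1}$ imposed by the partial trace corresponds exactly to the gluing of the source-labels at $v_i$ with the target-labels at $v_{i+1}$ around the cycle, as well as correctly tracking the order of the $S_i$-factors produced by iterated composition against the fixed total order on $V$. Once the indices are aligned, the termwise identification of the two sums is immediate, proving $\Psi^{\bm{\pi}}(\bigotimes_i\Phi_i)=f^{\bm{\pi}}_{\bm{\Phi}}$.
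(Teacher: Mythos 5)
Your proposal is correct and follows essentially the same route as the paper: specialise Theorem \ref{4-15} to the cycle graph, identify each local invariant space $\bigl(M^{\pi_i^*}\otimes M^{\pi_{i-1}}\otimes S_i\bigr)^{G}$ with $\textup{Hom}_G(M^{\pi_i},M^{\pi_{i-1}}\otimes S_i)$ via adjunction (your tensor $\sum_a m_{i,a}^*\otimes\Phi_i(m_{i,a})$ is exactly the paper's $\widetilde\Phi_i$), and verify the formula by an index computation. The only cosmetic difference is that you compare the fully expanded cyclic index sums on both sides directly, whereas the paper first contracts the intermediate sums into the operator composition and then invokes cyclicity of the partial trace; both yield the same identification $\Psi^{\bm{\pi}}\bigl(\bigotimes_i\widetilde\Phi_i\bigr)=f^{\bm{\pi}}_{\bm{\Phi}}$.
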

\begin{proof}
In the current situation we have
$\mathbf{M}^{\pi_{\mathcal{S}(v_i)}}=
M^{\pi_i^*}\otimes M^{\pi_{i-1}}$. Using \eqref{Kiso} and the fact that
$K_{v_i}=G$ we obtain
\begin{equation*}
\begin{split}
\bigl(\mathbf{M}^{\pi_{\mathcal{S}(v_i)}}\otimes S_i\bigr)^{K_{v_i}}&\simeq
\textup{Hom}_{G}\bigl(M^{\pi_i}\otimes M^{\pi_{i-1}^*},S_i)\\
&\simeq
\textup{Hom}_{G}(M^{\pi_i},M^{\pi_{i-1}}\otimes S_i),
\end{split}
\end{equation*}
where
$M^{\pi_i}\otimes M^{\pi_{i-1}^*}$ is considered as $G$-representations via the diagonal $G$-action. The isomorphism $\textup{Hom}_{G}(M^{\pi_i},M^{\pi_{i-1}}\otimes S_i)\overset{\sim}{\longrightarrow}
\textup{Hom}_G(M^{\pi_i}\otimes M^{\pi_{i-1}^*},S_i)$ maps $\Phi_i$ to the $G$-intertwiner $m_i\otimes\phi_{i-1}\mapsto (\phi_{i-1}\otimes\textup{id}_{S_i})\Phi_i(m_i)$.

Under these identifications the intertwiner 
$\Phi_i\in \textup{Hom}_G\bigl(M^{\pi_i},M^{\pi_{i-1}}\otimes S_i\bigr)$ 
corresponds to the local invariant tensor 
\[
\widetilde{\Phi}_i:=\sum_{k\in\mathcal{I}_{e_i}}m_{e_i,k}^{*}\otimes\Phi_i(m_{e_i,k})\in \bigl(\mathbf{M}^{\pi_{\mathcal{S}(v_i)}}\otimes S_i\bigr)^{K_{v_i}}.
\]
Combined with Theorem \ref{4-15} we thus obtain 
a linear isomorphism 
\[
\bigotimes_{i\in\mathbb{Z}/n\mathbb{Z}}\textup{Hom}_G\bigl(M^{\pi_i},M^{\pi_{i-1}}\otimes S_i\bigr)\overset{\sim}{\longrightarrow}
\bigl(\mathcal{R}^{\bm{\pi}}(G^{\times n})\otimes\mathbf{S}\bigr)^{\mathbf{K}},\qquad
\bigotimes_{i\in\mathbb{Z}/n\mathbb{Z}}\Phi_i\mapsto\widetilde{f}_{\bm{\Phi}}^{\bm{\pi}}
\]
with $\widetilde{f}_{\bm{\Phi}}^{\bm{\pi}}:=\Psi^{\bm{\pi}}\bigl(\bigotimes_{i\in\mathbb{Z}/n\mathbb{Z}}\widetilde{\Phi}_i\bigr)$. 

Rewriting $\widetilde{\Phi}_i$ as 
\begin{equation}\label{tildePhi}
\widetilde{\Phi}_i=\sum_{k_{i-1}\in\mathcal{I}_{e_{i-1}}}\sum_{\ell_i\in\mathcal{I}_{e_{i}}}m_{e_i,\ell_i}^*\otimes m_{e_{i-1},k_{i-1}}\otimes \bigl((m_{e_{i-1},k_{i-1}}^*\otimes\textup{id}_{S_i})\Phi_i(m_{e_i,\ell_i})\bigr)
\end{equation}
and applying \eqref{Psipure}, we obtain the explicit expression
\[
\widetilde{f}_{\bm{\Phi}}^{\bm{\pi}}(\bm{g})=\sum_{k_1\in\mathcal{I}_{e_1}}\cdots\sum_{k_n\in\mathcal{I}_{e_n}}\bigotimes_{i\in\mathbb{Z}/n\mathbb{Z}}\bigl(m_{e_{i-1},k_{i-1}}^{*}\pi_{i-1}(g_{i-1})\otimes\textup{id}_{\mathbf{S}_i}\bigr)\Phi_i(m_{e_i,k_i}).
\]
For $i\not=n$ the sum over $k_i\in\mathcal{I}_{e_i}$ can be simplified using the identity
\begin{equation*}
\begin{split}
\sum_{k_i\in\mathcal{I}_{e_i}}\Phi_i(m_{e_i,k_i})\otimes \bigl((m_{e_i,k_i}^{*}\pi_i(g_i)\otimes\textup{id}_{S_{i+1}})&\Phi_{i+1}(m_{e_{i+1},k_{i+1}})\bigr)=\\
&=(\Phi_i\pi_i(g_i)\otimes\textup{id}_{S_{i+1}})\Phi_{i+1}(m_{e_{i+1},k_{i+1}})
\end{split}
\end{equation*}
in $M^{\pi_{i-1}}\otimes S_i\otimes S_{i+1}$.
The expression for $\widetilde{f}_{\bm{\Phi}}^{\bm{\pi}}(\bm{g})$ then reduces to
\begin{equation*}
\begin{split}
\widetilde{f}_{\bm{\Phi}}^{\bm{\pi}}&(\bm{g})=\sum_{k_n\in\mathcal{I}_{e_n}}\left\{\bigl(m_{e_n,k_n}^{*}\pi_n(g_n)\otimes\textup{id}_{\mathbf{S}}\bigr)\right.
(\Phi_1\pi_1(g_1)\otimes\textup{id}_{S_2\otimes\cdots\otimes S_n})\cdots\\
&\left.\qquad\quad\qquad\qquad\qquad\qquad\qquad\qquad\cdots(\Phi_{n-1}\pi_{n-1}(g_{n-1})\otimes\textup{id}_{S_n})\Phi_n(m_{e_n,k_n})\right\}\\
=&\textup{Tr}_{M^{\pi_n}}^{\bm{S}}\Bigl((\pi_n(g_n)\otimes\textup{id}_{\mathbf{S}})(\Phi_1\pi_1(g_1)\otimes\textup{id}_{S_2\otimes\cdots\otimes S_n})\cdots
(\Phi_{n-1}\pi_{n-1}(g_{n-1})\otimes\textup{id}_{S_n})\Phi_n\Bigr).
\end{split}
\end{equation*}
Hence $\widetilde{f}_{\bm{\Phi}}^{\bm{\pi}}=f_{\bm{\Phi}}^{\bm{\pi}}$ by the cyclicity of the partial trace:
\[
\textup{Tr}_{M^{\pi_n}}^{\mathbf{S}}((A\otimes\textup{id}_{\mathbf{S}})B)=\textup{Tr}_{M^{\pi_n}}^{\mathbf{S}}(BA)
\]
for $A\in\textup{End}(M^{\pi_n})$ and $B\in\textup{End}(M^{\pi_n}\otimes\mathbf{S})$.
\end{proof}
The study of {\it $n$-point trace functions} originates from the paper \cite{ES}. Intertwiners
$\textup{Hom}_G\bigl(M^{\pi_i},M^{\pi_{i-1}}\otimes S_i\bigr)$ may be viewed as a topological degenerations of vertex operators. The class of (elementary) $n$-point trace functions and its generalisation to the affine and quantum group level
are particularly well studied, see, e.g., \cite{ES,EV,ESV}.

\subsection{}\label{4-18}
Let $n\in\mathbb{Z}_{\geq 1}$. As a next example we consider the linearly ordered linear graph $\Gamma$ with $n$ edges. We denote the ordered vertex and edge sets by
$V=\{v_1,\ldots,v_{n+1}\}$ and $E=\{e_1,\ldots,e_{n}\}$. The source and target maps are $s(e_i)=v_i$ and $t(e_i)=v_{i+1}$ for $i=1,\ldots,n$. As local gauge groups we take
\begin{equation}\label{localalg}
K_{v_i}=
\begin{cases}
H\qquad &\hbox{ for }\, i=1,\\
G\qquad &\hbox{ for }\, i=2,\ldots,n,\\
K\qquad &\hbox{ for }\, i=n+1,
\end{cases}
\end{equation}
where $H,K\subseteq G$ are subgroups. 

The action of the associated gauge group 
\[
\mathbf{K}=\prod_{i=1}^{n+1}K_{v_i}=H\times G^{\times (n-1)}\times K
\]
on $G^E\simeq G^{\times n}$ then becomes
\[
\bm{k}\cdot\bm{g}^\prime:=(hg_1^\prime g_2^{-1},g_2g_2^\prime g_3^{-1},\ldots,g_ng_n^\prime k^{-1})
\]
for $\bm{k}=(h,g_2,\ldots,g_{n},k)\in \mathbf{K}$ and $\bm{g}^\prime=(g_1^\prime,\ldots,g_n^\prime)\in G^E$.

Let $S_i$ ($1<i\leq n$) be finite dimensional $G$-representations, $L$ a finite dimensional $H$-representation and $N$ a finite dimensional  $K$-representation. Denote by
\[
\mathbf{S}:=L\otimes S_2\otimes\cdots\otimes S_n\otimes N
\]
the resulting tensor product spin representation of $\mathbf{K}$. We also write 
\[
\underline{S}:=S_2\otimes\cdots\otimes S_n,
\]
for the ``bulk'' $G^{\times (n-1)}$-representation associated to $\mathbf{S}$, so that $\mathbf{S}=L\otimes\underline{S}\otimes N$.
Denote by 
\[
Q_{L\otimes\underline{S},N}: \textup{Hom}(N^*,L\otimes\underline{S})\overset{\sim}{\longrightarrow}\mathbf{S}
\]
the linear isomorphism defined by 
\[
Q_{L\otimes\underline{S},N}(T):=\sum_{j}T(n_j^{*})\otimes n_j,
\]
with $\{n_j\}_j$ a basis of $N$ and $\{n_j^*\}_j$ the corresponding dual basis of $N^*$.

Let $\bm{\pi}: G^{\times n}\rightarrow\textup{GL}(\mathbf{M}^{\bm{\pi}})$ be an irreducible tensor product representation, with local components $\pi_i: G\rightarrow\textup{GL}(M^{\pi_i})$ ($1\leq i\leq n$). In the following proposition we will also view $M^{\pi_1}$ (resp. $M^{\pi_n}$) as $H$-representation (resp. $K$-representation) by restriction.
\begin{prop}
We have a linear isomorphism
\begin{equation*}
\begin{split}
\textup{Hom}_{H}(M^{\pi_1},L)\otimes\,\bigotimes_{i=2}^n\textup{Hom}_G\bigl(M^{\pi_i},M^{\pi_{i-1}}\otimes S_i\bigr)&\otimes
\textup{Hom}_{K}(N^*,M^{\pi_n})\\
&\qquad\overset{\sim}{\longrightarrow}
\bigl(\mathcal{R}^{\bm{\pi}}(G^{\times n})\otimes\mathbf{S}\bigr)^{\mathbf{K}}
\end{split}
\end{equation*}
mapping $\Theta\otimes\bigl(\bigotimes_{i=2}^n\Phi_i\bigr)\otimes\Xi$ to the spin graph function $f_{\Theta,\bm{\Phi},\Xi}^{\bm{\pi}}\in\bigl(\mathcal{R}^{\bm{\pi}}(G^{\times n})\otimes\mathbf{S}\bigr)^{\mathbf{K}}$, defined by
\begin{equation*}
\begin{split}
f_{\Theta,\bm{\Phi},\Xi}^{\bm{\pi}}(\bm{g}):=&Q_{L\otimes\underline{S},N}\Bigl((\Theta\pi_1(g_1)\otimes\textup{id}_{\underline{S}})
(\Phi_2\pi_2(g_2)\otimes\textup{id}_{S_3\otimes\cdots\otimes S_{n}})
\cdots\\
&\,\,\,\quad\qquad\qquad\qquad\qquad\qquad\quad\cdots
(\Phi_{n-1}\pi_{n-1}(g_{n-1})\otimes\textup{id}_{S_n})\Phi_n\pi_n(g_n)\Xi\Bigr).
\end{split}
\end{equation*}
\end{prop}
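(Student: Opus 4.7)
The plan is to follow the same strategy as in the proof of Proposition~\ref{4-17}, with the trace at the boundary replaced by two explicit boundary identifications. First I would apply Theorem~\ref{4-15} to reduce the problem to constructing an isomorphism between $\bigotimes_{i=1}^{n+1} (\mathbf{M}^{\pi_{\mathcal{S}(v_i)}} \otimes S_{v_i})^{K_{v_i}}$ (with $S_{v_1}=L$, $S_{v_i}=S_i$ for $2\leq i\leq n$, and $S_{v_{n+1}}=N$) and the tensor product of intertwiner spaces in the statement, and then to unwind the image of a pure tensor under $\Psi^{\bm{\pi}}$.

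For the boundary vertices the stars degenerate: $\mathcal{S}(v_1|s)=\{e_1\}$, $\mathcal{S}(v_1|t)=\emptyset$ and $\mathcal{S}(v_{n+1}|s)=\emptyset$, $\mathcal{S}(v_{n+1}|t)=\{e_n\}$, so $\mathbf{M}^{\pi_{\mathcal{S}(v_1)}}=(M^{\pi_1})^*$ and $\mathbf{M}^{\pi_{\mathcal{S}(v_{n+1})}}=M^{\pi_n}$. The isomorphism~\eqref{Kiso} then yields $((M^{\pi_1})^*\otimes L)^H\simeq \textup{Hom}_H(M^{\pi_1},L)$ at $v_1$ and $(M^{\pi_n}\otimes N)^K\simeq\textup{Hom}_K(N^*,M^{\pi_n})$ at $v_{n+1}$, while for an interior vertex $v_i$ ($2\leq i\leq n$) the identification $(\mathbf{M}^{\pi_{\mathcal{S}(v_i)}}\otimes S_i)^G\simeq \textup{Hom}_G(M^{\pi_i},M^{\pi_{i-1}}\otimes S_i)$ is exactly the one appearing in Proposition~\ref{4-17}. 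Under these identifications, $\Theta$, the $\Phi_i$, and $\Xi$ correspond respectively to the local invariant tensors $\widetilde{\Theta}=\sum_k m_{e_1,k}^*\otimes\Theta(m_{e_1,k})$, the tensors $\widetilde{\Phi}_i$ given by \eqref{tildePhi}, and $\widetilde{\Xi}=\sum_j \Xi(n_j^*)\otimes n_j$ for dual bases $\{n_j\}$, $\{n_j^*\}$ of $N$, $N^*$.

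Second, I would apply $\Psi^{\bm{\pi}}$ to the tensor product $\widetilde{\Theta}\otimes\widetilde{\Phi}_2\otimes\cdots\otimes\widetilde{\Phi}_n\otimes\widetilde{\Xi}$ using \eqref{Psipure}, producing a multiple sum over internal indices $k_i\in\mathcal{I}_{e_i}$ ($1\leq i\leq n-1$) and a sum over $j$. Each internal sum telescopes via the contraction identity
\[
\sum_{k_i\in\mathcal{I}_{e_i}}\Phi_i(m_{e_i,k_i})\otimes\bigl((m_{e_i,k_i}^*\pi_i(g_i)\otimes\textup{id})\Phi_{i+1}(m_{e_{i+1},k_{i+1}})\bigr)=(\Phi_i\pi_i(g_i)\otimes\textup{id})\Phi_{i+1}(m_{e_{i+1},k_{i+1}})
\]
used in Proposition~\ref{4-17}, applied successively for $i=n-1,\ldots,2$; the analogous contraction at $i=1$ absorbs $\widetilde{\Theta}$ into the composition via $\Theta\pi_1(g_1)\otimes\textup{id}_{\underline{S}}$. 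After all these telescopings, the remaining sum $\sum_j(\cdots)\Xi(n_j^*)\otimes n_j$ is by definition $Q_{L\otimes\underline{S},N}$ applied to the composed intertwiner, which produces exactly the claimed formula for $f_{\Theta,\bm{\Phi},\Xi}^{\bm{\pi}}(\bm{g})$.

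The only real obstacle is careful bookkeeping: keeping track of which tensor slot of $\mathbf{S}=L\otimes\underline{S}\otimes N$ each successive intertwiner populates, making sure the telescoping collapses in an order compatible with the total order on $E$ built into $\Psi^{\bm{\pi}}$, and checking that the two boundary contractions (at $v_1$ and $v_{n+1}$) correctly produce the outer factor $\Theta\pi_1(g_1)\otimes\textup{id}_{\underline{S}}$ on the left and the $Q_{L\otimes\underline{S},N}$-packaged factor $\pi_n(g_n)\Xi$ on the right. No new representation-theoretic ingredient beyond \eqref{Kiso} and Theorem~\ref{4-15} is needed.
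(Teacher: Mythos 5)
Your proposal is correct and follows essentially the same route as the paper's own proof: reduce via Theorem \ref{4-15} to the local invariant spaces, identify the two boundary spaces with $\textup{Hom}_H(M^{\pi_1},L)$ and $\textup{Hom}_K(N^*,M^{\pi_n})$ and the bulk spaces as in Proposition \ref{4-17}, then apply $\Psi^{\bm{\pi}}$ and telescope the internal index sums, with the final $j$-sum packaged by $Q_{L\otimes\underline{S},N}$. The bookkeeping points you flag (boundary stars $\mathcal{S}(v_1|s)=\{e_1\}$, $\mathcal{S}(v_{n+1}|t)=\{e_n\}$, and the order of contraction) are exactly the computations carried out explicitly in the paper.
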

\begin{proof}
At vertices $v_i$ with $1<i\leq n$ the analysis of the local space of invariants $\bigl(\mathbf{M}^{\pi_{\mathcal{S}(v_i)}}\otimes S_i\bigr)^{K_{v_i}}$ is as in the proof of Proposition \ref{4-17}. For $i=1$ we have
\[
\bigl(\mathbf{M}^{\pi_{\mathcal{S}(v_1)}}\otimes L\bigr)^{K_{v_1}}=\bigl(M^{\pi_1^*}\otimes L\bigr)^H\simeq\textup{Hom}_H\bigl(M^{\pi_1},L),
\]
with the isomorphism as in \S\ref{4-16}. For $i=n+1$ we analogously have
\[
\bigl(\mathbf{M}^{\pi_{\mathcal{S}(v_{n+1})}}\otimes N\bigr)^{K_{v_{n+1}}}=\bigl(M^{\pi_{n}}\otimes N\bigr)^{K}\simeq
\textup{Hom}_K(N^*,M^{\pi_{n}}).
\]
Under these isomorphisms the intertwiner $\Theta\in\textup{Hom}_H\bigl(M^{\pi_1},L)$ corresponds 
\[
\widetilde{\Theta}:=\sum_{\ell_1\in\mathcal{I}_{e_1}}m_{e_1,\ell_1}^*\otimes\Theta(m_{e_1,\ell_1})
\in \bigl(\mathbf{M}^{\pi_{\mathcal{S}(v_1)}}\otimes L\bigr)^{K_{v_1}}
\]
and $\Xi\in\textup{Hom}_K(N^*,M^{\pi_{n}})$ to 
\[
\widetilde{\Xi}:=\sum_j\Xi(n_j^*)\otimes n_j\in \bigl(\mathbf{M}^{\pi_{\mathcal{S}(v_{n+1})}}\otimes N\bigr)^{K_{v_{n+1}}}.
\]

Combined with Theorem \ref{4-15} we thus obtain 
a linear isomorphism 
\begin{equation*}
\begin{split}
\textup{Hom}_{H}(M^{\pi_1},L)\otimes\,\bigotimes_{i=2}^n\textup{Hom}_G\bigl(M^{\pi_i},M^{\pi_{i-1}}\otimes S_i\bigr)&\otimes
\textup{Hom}_{K}(N^*,M^{\pi_n})\\
&\qquad\overset{\sim}{\longrightarrow}
\bigl(\mathcal{R}^{\bm{\pi}}(G^{\times n})\otimes\mathbf{S}\bigr)^{\mathbf{K}}
\end{split}
\end{equation*}
mapping $\Theta\otimes\bigl(\bigotimes_{i=2}^n\Phi_i\bigr)\otimes\Xi$ to 
\[
\widetilde{f}_{\Theta,\bm{\Phi},\Xi}^{\bm{\pi}}:=\Psi^{\bm{\pi}}\Bigl(\widetilde{\Theta}\otimes\bigl(\bigotimes_{i=2}^n\widetilde{\Phi}_i\bigr)\otimes\widetilde{\Xi}\Bigr),
\]
where $\widetilde{\Phi}_i$ is given by
\eqref{tildePhi}.

A direct computation now shows that the spin graph function $\widetilde{f}_{\Theta,\bm{\Phi},\Xi}^{\bm{\pi}}(\bm{g})$ is explicitly given by
\begin{equation*}
\begin{split}
\sum_{i_1\in\mathcal{I}_{e_1}}\cdots\sum_{i_{n-1}\in\mathcal{I}_{e_{n-1}}}\sum_j&\Theta(m_{e_1,i_1})\otimes\bigl((m_{e_1,i_1}^{*}\pi_1(g_1)\otimes\textup{id}_{S_2})
\Phi_2(m_{e_2,i_2})\bigr)\otimes\cdots\\
&\cdots\otimes \bigl((m_{e_{n-1},i_{n-1}}^{*}\pi_{n-1}(g_{n-1})\otimes\textup{id}_{S_n})\Phi_n(\pi_n(g_n)\Xi(n_j^*))\bigr)\otimes n_j.
\end{split}
\end{equation*}
Contracting the bulk intertwiners $\Phi_i$ ($i=2,\ldots,n$) as in the proof of Proposition \ref{4-17}, we obtain the expression
\begin{equation*}
\begin{split}
\sum_{i_1\in\mathcal{I}_{e_1}}\sum_j&\Theta(m_{e_1,i_1})\otimes
\left\{(m_{e_1,i_1}^{*}\pi_1(g_1)\otimes\textup{id}_{\underline{S}})(\Phi_2\pi_2(g_2)\otimes\textup{id}_{S_3\otimes\cdots\otimes S_n})\cdots\right.\\
&\left.\qquad\qquad\qquad\qquad\quad\cdots\bigl(\Phi_{n-1}\pi_{n-1}(g_{n-1})\otimes\textup{id}_{S_n}\bigr)\Phi_n(\pi_n(g_n)\Xi(n_j^*))\right\}\otimes n_j
\end{split}
\end{equation*}
for $\widetilde{f}_{\Theta,\bm{\Phi},\Xi}^{\bm{\pi}}(\bm{g})$, which is easily seen to be equal to $f_{\Theta,\bm{\Phi},\Xi}^{\bm{\pi}}(\bm{g})$. This completes the proof.
\end{proof}
For $H=K$ the modified spin graph functions $Q_{L\otimes\underline{S},N}^{-1}f_{\Theta,\bm{\Phi},\Xi}^{\bm{\pi}}$ are the {\it elementary $n$-point spherical functions} from \cite{SR,RS}. 
For $n=1$, they reduce to the elementary spherical functions on compact symmetric spaces.

\section{Quantum spin systems on graph connections}\label{5}
Let $\Gamma$ be a finite oriented graph and $G$ a connected compact Lie group. We write 
$V=\{v_1,\ldots,v_r\}$ and $E=\{e_1,\ldots,e_n\}$ for the totally ordered vertex and edge set of $\Gamma$. We denote by $\mathfrak{g}$ the complexification of the Lie algebra $\mathfrak{g}_0$ of $G$. In this section we introduce a quantum spin system on the spaces $\mathcal{H}=\mathcal{H}_{\Gamma,G,S}$ of spin graph functions. 

\subsection{}\label{5-1}
We call a linear differential operator $D$ on $G$ algebraic if it preserves the space $\mathcal{R}(G)$ of representative functions on $G$ (see \S\ref{4-1}). 
We have an inclusion of algebras
\[
\mathcal{D}_{\textup{biinv}}(G)\subseteq\mathcal{D}_{\textup{inv}}(G)\subseteq\mathcal{D}(G)
\]
with $\mathcal{D}(G)$ the algebra of algebraic differential operators on $G$, with $\mathcal{D}_{\textup{inv}}(G)\subseteq\mathcal{D}(G)$ the subalgebra
generated by the left and right $G$-invariant differential operators on $G$, and with $\mathcal{D}_{\textup{biinv}}(G)\subseteq\mathcal{D}_{\textup{inv}}(G)$ the subalgebra
of $G$-biinvariant differential operators on $G$. 
\subsection{}\label{5-2}
We have a surjective algebra map 
\begin{equation}\label{su}
U(\mathfrak{g}^{\times 2})\twoheadrightarrow\mathcal{D}_{\textup{inv}}(G),\qquad X\mapsto D_X
\end{equation}
defined by
\begin{equation*}
\begin{split}
\bigl(D_{(x,y)}f\bigr)(g):=&\frac{d}{ds}\bigg|_{s=0}f\bigl(\exp(-sx)g\exp(sy)\bigr)\\
=&\frac{d}{ds}\bigg|_{s=0}f\bigl(\exp(-sx)g\bigr)+\frac{d}{dt}\bigg|_{t=0}f\bigl(g\exp(ty)\bigr)
\end{split}
\end{equation*}
for $(x,y)\in\mathfrak{g}_0^{\times 2}$. Identify $U(\mathfrak{g}^{\times 2})\simeq U(\mathfrak{g})\otimes
U(\mathfrak{g})$ as algebras, with the isomorphism $U(\mathfrak{g}^{\times 2})\overset{\sim}{\longrightarrow}
U(\mathfrak{g})\otimes U(\mathfrak{g})$ induced by $(x,y)\mapsto x\otimes 1+1\otimes y$ for $x,y\in\mathfrak{g}$. We then have the balancing condition
\[
D_{X\otimes ZY}=D_{X\iota(Z)\otimes Y}
\]
for $X,Y\in U(\mathfrak{g})$ and $Z\in Z(\mathfrak{g})$, where $\iota$ is the antipode of $U(\mathfrak{g})$ (i.e., $\iota$ is the unique anti-algebra automorphism of $U(\mathfrak{g})$
such that $x\mapsto -x$ for $x\in\mathfrak{g}$). Hence the algebra map \eqref{su} descends to an isomorphism of algebras
\begin{equation}\label{im}
U(\mathfrak{g})\otimes_{Z(\mathfrak{g})}U(\mathfrak{g})\overset{\sim}{\longrightarrow}\mathcal{D}_{\textup{inv}}(G)
\end{equation}
with the balanced tensor product over $Z(\mathfrak{g})$ relative to
the $\iota$-twisted right regular $Z(\mathfrak{g})$-action on $U(\mathfrak{g})$
\[
X\cdot Z:=X\iota(Z)\qquad\quad (X\in U(\mathfrak{g}),\,\, Z\in Z(\mathfrak{g}))
\]
and the left regular $Z(\mathfrak{g})$-action on $U(\mathfrak{g})$ (injectivity of the map \eqref{im} was shown in \cite{Ne}).

\subsection{}\label{5-3}
The algebra $\mathcal{D}_{\textup{biinv}}(G)$ of $G$-biinvariant differential operators on $G$ is isomorphic to $Z(\mathfrak{g})$ via the map
\[
Z(\mathfrak{g})\overset{\sim}{\longrightarrow}\mathcal{D}_{\textup{biinv}}(G),\qquad\quad Z\mapsto D_{1\otimes Z}=D_{\iota(Z)\otimes 1}.
\]
In particular, $\mathcal{D}_{\textup{biinv}}(G)$ is contained in the center of $\mathcal{D}_{\textup{inv}}(G)$.

\subsection{}\label{5-4}
Consider the algebra $\mathcal{D}(G^E)$ of algebraic differential operators on the connected compact Lie group $G^E$, and recall the gauge action of $\mathbf{K}$ on $G^E$ (see \S\ref{4-6}). The corresponding contragredient $\mathbf{K}$-action on $\mathcal{R}(G^E)$ is 
\[
(\mathbf{k}\cdot f)(\mathbf{g}):=f(\mathbf{k}^{-1}\cdot\mathbf{g})
\] 
for $\mathbf{k}\in\mathbf{K}$, $f\in\mathcal{R}(G^E)$ and $\mathbf{g}\in\mathbf{G}$ (this is the special case of the $\mathbf{K}$-action on $\mathcal{R}(G^E)\otimes S$
from \S\ref{4-7} when $S$ is the trivial $\mathbf{K}$-representation). This action induces an $\mathbf{K}$-action 
\[
\mathbf{K}\times\mathcal{D}(G^E)\rightarrow\mathcal{D}(G^E),\qquad\quad
(\mathbf{k},D)\mapsto \mathbf{k}\bullet D
\] 
on $\mathcal{D}(G^E)$ by algebra automorphisms such that
\begin{equation}\label{KcompD}
\mathbf{k}\cdot (Df)=(\mathbf{k}\bullet D)(\mathbf{k}\cdot f)
\end{equation}
for $\mathbf{k}\in\mathbf{K}$, $D\in\mathcal{D}(G^E)$ and $f\in\mathcal{R}(G^E)$. We denote by
$\mathcal{D}(G^E)^{\mathbf{K}}\subseteq\mathcal{D}(G^E)$ the subalgebra of $\mathbf{K}$-invariant differential operators on $G^E$.
\subsection{}\label{5-5}
As in \S\ref{5-2}, we identify
$U((\mathfrak{g}^E)^{\times 2})\simeq U(\mathfrak{g}^E)\otimes U(\mathfrak{g}^E)$. Furthermore, we identify 
$U(\mathfrak{g}^E)\simeq U(\mathfrak{g})^{\otimes\#E}$ as algebras, with the isomorphism induced by 
\[
(x_e)_{e\in E}\mapsto \sum_{i=1}^n1^{\otimes (i-1)}\otimes x_{e_i}\otimes 1^{\otimes (n-i)}
\]
for $(x_e)_{e\in E}\in\mathfrak{g}^E$. It restricts to an isomorphism $Z(\mathfrak{g}^E)\simeq Z(\mathfrak{g})^{\otimes\#E}$.

We will use the notation 
\[
X^{(i)}:=1^{\otimes (i-1)}\otimes X\otimes 1^{\otimes (n-i)}\in U(\mathfrak{g})^{\otimes\#E}
\]
for $X\in U(\mathfrak{g})$ and $i\in\{1,\ldots,n\}$, and a pure tensor in $U((\mathfrak{g}^E)^{\times 2})$ will be denoted by
$\mathbf{X}\otimes\mathbf{Y}$ with 
\[
\mathbf{X}=\bigotimes_{e\in E}X_e,\qquad\quad\mathbf{Y}=\bigotimes_{e^\prime\in E}Y_{e^\prime}.
\]
and $X_e, Y_{e^\prime}\in U(\mathfrak{g})$.
\begin{lem}
The formula
\begin{equation}\label{bulletaction}
\bm{k}\bullet\bigl(\mathbf{X}\otimes\mathbf{Y}\bigr):=\Bigl(\bigotimes_{e\in E}\textup{Ad}(k_{s(e)})X_e\Bigr)\otimes
\Bigl(\bigotimes_{e^\prime\in E}\textup{Ad}(k_{t(e^\prime)})Y_{e^\prime}\Bigr)
\end{equation}
defines an action of $\mathbf{K}$ on $U((\mathfrak{g}^E)^{\times 2})$ by algebra automorphisms. Furthermore,
\begin{equation}\label{kcd}
\bm{k}\bullet D_{\mathbf{X}\otimes\mathbf{Y}}=D_{\bm{k}\bullet (\mathbf{X}\otimes\mathbf{Y})}.
\end{equation}
\end{lem}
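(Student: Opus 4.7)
The first statement reduces to general functoriality. For each vertex $v \in V$ and each $k_v \in K_v \subseteq G$, the adjoint $\textup{Ad}(k_v) \in \textup{Aut}(\mathfrak{g})$ extends uniquely to an algebra automorphism of $U(\mathfrak{g})$, which I still denote by $\textup{Ad}(k_v)$. Under the identification $U((\mathfrak{g}^E)^{\times 2}) \simeq U(\mathfrak{g})^{\otimes 2n}$ from \S\ref{5-5}, formula \eqref{bulletaction} realizes $\bm{k}\bullet$ as the tensor product of the $2n$ algebra automorphisms $\textup{Ad}(k_{s(e_i)})$ on the first $n$ factors and $\textup{Ad}(k_{t(e_i)})$ on the last $n$ factors, and is therefore itself an algebra automorphism of $U((\mathfrak{g}^E)^{\times 2})$. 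The group homomorphism property of $\textup{Ad}: G \to \textup{Aut}(\mathfrak{g})$ together with the componentwise product in $\mathbf{K}$ then yields $(\bm{k}\bm{k}')\bullet Z = \bm{k}\bullet(\bm{k}'\bullet Z)$, so $\bullet$ is a left $\mathbf{K}$-action by algebra automorphisms.

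For \eqref{kcd}, the plan is to reduce to a check on algebra generators. Rewriting \eqref{KcompD} as $(\bm{k}\bullet D)(f) = \bm{k}\cdot(D(\bm{k}^{-1}\cdot f))$, one sees that $\bm{k}\bullet$ is an algebra automorphism of $\mathcal{D}(G^E)$, namely conjugation by the algebra automorphism $\bm{k}\cdot$ of $\mathcal{R}(G^E)$. Composing with the $G^E$-version of the algebra epimorphism \eqref{su}, both sides of \eqref{kcd} become algebra homomorphisms in the variable $\mathbf{X}\otimes\mathbf{Y} \in U((\mathfrak{g}^E)^{\times 2})$: the left-hand side because $D_{\bullet}$ is an algebra map and $\bm{k}\bullet$ is an algebra automorphism of $\mathcal{D}(G^E)$, the right-hand side by the first statement together with \eqref{su}. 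It therefore suffices to verify \eqref{kcd} on a generating set of $U((\mathfrak{g}^E)^{\times 2})$, for which I take elements of the form $(x_{(e)},0)$ and $(0,y_{(e')})$ with $x,y\in\mathfrak{g}_0$, where $x_{(e)}\in\mathfrak{g}_0^E$ is the element with $x$ in the $e$-th slot and zero elsewhere.

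The direct computation then runs as follows. For $D = D_{(x_{(e)},0)}$, one has
\begin{equation*}
((\bm{k}\bullet D)f)(\bm{g}) = D(\bm{k}^{-1}\cdot f)(\bm{k}^{-1}\cdot\bm{g}) = \frac{d}{ds}\bigg|_{s=0} f\bigl(\bm{k}\cdot\bm{h}_s\bigr),
\end{equation*}
where $\bm{h}_s\in G^E$ agrees with $\bm{k}^{-1}\cdot\bm{g}$ outside the $e$-th slot and equals $\exp(-sx)(\bm{k}^{-1}\cdot\bm{g})_e$ there. Using $(\bm{k}^{-1}\cdot\bm{g})_e = k_{s(e)}^{-1}g_ek_{t(e)}$ and the conjugation identity $k\exp(-sx)k^{-1} = \exp(-s\,\textup{Ad}(k)x)$, one finds $(\bm{k}\cdot\bm{h}_s)_e = \exp(-s\,\textup{Ad}(k_{s(e)})x)g_e$ while $(\bm{k}\cdot\bm{h}_s)_{e'} = g_{e'}$ for $e'\ne e$. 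Differentiating in $s$ at $s=0$ yields $D_{((\textup{Ad}(k_{s(e)})x)_{(e)},\,0)}f(\bm{g})$, matching \eqref{bulletaction} on this generator. The analogous computation for $(0,y_{(e')})$ moves $k_{t(e')}^{-1}$ past $\exp(sy)$ and produces $\textup{Ad}(k_{t(e')})y$ in the $e'$-th slot of the second tensor factor, again as predicted.

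The main obstacle is almost entirely notational: the argument requires keeping careful track of the identifications among $(\mathfrak{g}^E)^{\times 2}$, $U(\mathfrak{g}^E)^{\otimes 2}$ and $U(\mathfrak{g})^{\otimes 2n}$, and of the asymmetric roles of source and target maps (which produce $\textup{Ad}(k_{s(e)})$ on the first tensor factor and $\textup{Ad}(k_{t(e')})$ on the second). Once this bookkeeping is in place, what remains is the conjugation identity above together with the standard reduction-to-generators afforded by the first statement of the lemma.
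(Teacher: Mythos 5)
Your argument is correct and follows essentially the same route as the paper: the first statement via functoriality of $\textup{Ad}$ on $U(\mathfrak{g})$, and \eqref{kcd} by reducing to the generators $x^{(i)}\otimes 1$ and $1\otimes y^{(i)}$ with $x,y\in\mathfrak{g}_0$ and then the direct computation using $k\exp(-sx)k^{-1}=\exp(-s\,\textup{Ad}(k)x)$, exactly as in the paper's proof. Your explicit justification of the reduction to generators (that $\bm{k}\bullet$ is conjugation by $\bm{k}\cdot$ on $\mathcal{D}(G^E)$ and $X\mapsto D_X$ is an algebra map) is a slightly more careful version of what the paper leaves implicit.
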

\begin{proof}
The first statement is immediate. For the second statement, it suffices to check \eqref{kcd}
when $\mathbf{X}=x^{(i)}$ and $\mathbf{Y}=1_{U(\mathfrak{g}^E)}$ and when $\mathbf{X}=1_{U(\mathfrak{g}^E)}$ and $\mathbf{Y}=y^{(i)}$, where $x,y\in\mathfrak{g}_0$.
When $\mathbf{X}=x^{(i)}$ and $\mathbf{Y}=1_{U(\mathfrak{g}^E)}$ we have
\begin{equation*}
\begin{split}
\bigl(\bm{k}\cdot(D_{\mathbf{X}\otimes\mathbf{Y}}f)\bigr)(\bm{g})&=\frac{d}{dt}\bigg|_{t=0}f(\cdots,\exp(-tx)k_{s(e_i)}^{-1}g_ik_{t(e_i)},\cdots)\\
&=\frac{d}{dt}\bigg|_{t=0}f(\cdots,k_{s(e_i)}^{-1}\exp(-t\textup{Ad}(k_{s(e_i)})x)g_ik_{t(e_i)},\cdots)\\
&=\bigl(D_{\mathbf{k}\bullet(\mathbf{X}\otimes\mathbf{Y})}(\bm{k}\cdot f)\bigr)(\bm{g}),
\end{split}
\end{equation*}
as desired. A similar computation proves \eqref{kcd} when $\mathbf{X}=1_{U(\mathfrak{g}^E)}$ and $\mathbf{Y}=y^{(i)}$.
\end{proof}

\subsection{}\label{5-6}
Lemma \ref{5-5} shows that $\mathcal{D}_{\textup{inv}}(G^E)$ is a $\mathbf{K}$-invariant subalgebra of $\mathcal{D}(G^E)$.
Denote by 
\[
\mathcal{D}_{\textup{inv}}(G^E)^{\mathbf{K}}\subseteq\mathcal{D}_{\textup{inv}}(G^E)
\]
the subalgebra of $\mathbf{K}$-invariant differential operators in $\mathcal{D}_{\textup{inv}}(G^E)$.

By \S\ref{5-3} and Lemma \ref{5-5} we then have the inclusion
\begin{equation}\label{inclusion}
\mathcal{D}_{\textup{biinv}}(G^E)\subseteq\mathcal{D}_{\textup{inv}}(G^E)^{\mathbf{K}}\subseteq\mathcal{D}(G^E)^{\mathbf{K}}
\end{equation}
of algebras.

\subsection{}\label{5-7}
Let $S$ be a finite dimensional $\mathbf{K}$-representation. The space $\mathcal{R}(G^E)\otimes S$ of $S$-valued representative functions on $G^E$
becomes a $\mathcal{D}(G^E)$-module by
\[
D(h\otimes u):=D(h)\otimes u
\]
for $D\in\mathcal{D}(G^E)$, $h\in\mathcal{R}(G^E)$ and $u\in S$. In addition we have the restricted gauge group $\mathbf{K}$ acts on $\mathcal{R}(G^E)\otimes S$ by the twisted $\mathbf{K}$-action $(\bm{k},f)\mapsto\bm{k}\cdot f$ from \S\ref{4-7}. Then formula \eqref{KcompD} remains true in this more general context,
\[
\mathbf{k}\cdot (Df)=(\mathbf{k}\bullet D)(\mathbf{k}\cdot f)
\]
for $\mathbf{k}\in\mathbf{K}$, $D\in\mathcal{D}(G^E)$ and $f\in\mathcal{R}(G^E)\otimes S$.

\subsection{}\label{5-8}
As a consequence of \S\ref{5-7}, the algebra $\mathcal{D}(G^E)^{\mathbf{K}}$ of $\mathbf{K}$-invariant algebraic differential
operators on $G^E$ acts on the space $\mathcal{H}_{\Gamma,G,S}=(\mathcal{R}(G^E)\otimes S)^{\mathbf{K}}$
of spin graph functions. The resulting homomorphic image of the inclusions \eqref{inclusion} of algebras in $\textup{End}(\mathcal{H}_{\Gamma,G,S})$ gives rise to the
inclusion 
\[
I_{\Gamma,G,S}\subseteq J_{\Gamma,G,S}\subseteq A_{\Gamma,G,S}
\]
of subalgebras of $\textup{End}(\mathcal{H}_{\Gamma,G,S})$.
We omit the labels $\Gamma,G,S$ if they are clear from context. Note that $I$ is contained in the center of $J$, in view of \S\ref{5-3}.

\subsection{}\label{5-9}
Following \S\ref{2-7} we view the inclusion of algebras
\[
I\subseteq J\subseteq C_A(I)\subseteq A\subseteq\textup{End}(\mathcal{H})
\]
as a quantum spin system with quantum state space $\mathcal{H}$, algebra of quantum observables $A$, algebra of quantum integrals $J$, and commutative algebra of quantum Hamiltonians $I$. For $i\in\{1,\ldots,n\}$ and $\Omega\in Z(\mathfrak{g})$ the quadratic Casimir element, the action of
\[
\Omega^{(i)}\in Z(\mathfrak{g}^E)\simeq\mathcal{D}_{\textup{biinv}}(G^E)
\] 
on $\mathcal{H}$ is a quantum Hamiltonian $H_i\in I$ of the quantum spin system. 

We call $H_i$ ($i\in\{1,\ldots,n\}$) the {\it edge-component quadratic Hamiltonians} of the quantum spin system.

\subsection{}\label{5-10}
Write $\chi_\pi: Z(\mathfrak{g})\rightarrow\mathbb{C}$ for the central character of $\pi\in G^\wedge$.
Then
\[
(G^\wedge)^{E}\hookrightarrow I^\wedge,\qquad\quad \bm{\pi}\mapsto\bm{\chi}_{\bm{\pi}}
\]
with $\bm{\chi}_{\bm{\pi}}\in I^\wedge$ determined by the formula 
\[
\bm{\chi}_{\bm{\pi}}(Z_i)=\chi_{\pi_i}(Z)
\]
for $i=1,\ldots,n$ and $Z\in Z(\mathfrak{g})$
(here the $\pi_j$ are the local components of the tensor product representation $\bm{\pi}$, see \S\ref{4-9}). It follows from \S\ref{4-12} that the space $\mathcal{H}^{\bm{\pi}}$ of $\bm{\pi}$-elementary spin graph functions can alternatively be described as the simultaneous $I$-eigenspace for the one-dimensional
$I$-module $\bm{\chi}_{\bm{\pi}}\in I^\wedge$,
\[
\mathcal{H}^{\bm{\pi}}=\mathcal{H}_{\bm{\chi}_{\bm{\pi}}}.
\]

Hence condition (a) from \S\ref{2-6} always holds true for the quantum spin system,
\[
\mathcal{H}=\bigoplus_{\bm{\pi}\in (G^\wedge)^{E}}\mathcal{H}_{\bm{\chi}_{\bm{\pi}}},
\]
with $\mathcal{H}_{\bm{\chi}_{\bm{\pi}}}=\mathcal{H}^{\bm{\pi}}$ the finite dimensional space of $\bm{\pi}$-elementary spin graph functions.

\subsection{}\label{5-11}
  The following is the main result of the paper.
\begin{thm}
Let $\Gamma$ be a finite connected oriented graph, $G$ a connected compact Lie group, $\mathbf{K}=\prod_{v\in V}K_v$ with $K_v\subseteq G$ subgroups, and $\sigma: \mathbf{K}\rightarrow\textup{GL}(S)$ a finite dimensional representation.

The quantum spin system on $\mathcal{H}=\mathcal{H}_{\Gamma,G,S}$ as defined in \S\ref{5-9} is superintegrable if the following three conditions hold true:
\begin{enumerate}
\item[{\textup{(a)}}] $G$ is simply connected. 
\item[{\textup{(b)}}] For each $v\in V$, the local gauge group $K_v\subseteq G$ is closed and connected.
\item[{\textup{(c)}}] The representation $\sigma: \mathbf{K}\rightarrow\textup{GL}(S)$ is irreducible.
\end{enumerate}
\end{thm}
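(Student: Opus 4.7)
The plan is to verify, under hypotheses (a)--(c), the two spectral conditions of \S\ref{2-6} for the inclusions $I\subseteq J\subseteq A\subseteq\textup{End}(\mathcal{H})$. Condition (a) is free: \S\ref{5-10} already gives $\mathcal{H}=\bigoplus_{\bm{\pi}\in (G^\wedge)^E}\mathcal{H}^{\bm{\pi}}$ as a direct sum of finite dimensional joint $I$-eigenspaces. The substance is condition (b): each nonzero $\mathcal{H}^{\bm{\pi}}$ must be shown to be an irreducible $J$-module. My plan is to extract this from Corollary \ref{3-17}, applied to the inclusion $\mathfrak{k}\subseteq\mathfrak{g}^E\times\mathfrak{g}^E$, where $\mathfrak{k}$ is the image of the complexified Lie algebra of $\mathbf{K}$ under the diagonal embedding
\[
\bm{k}\longmapsto\bigl((k_{s(e)})_{e\in E},(k_{t(e)})_{e\in E}\bigr).
\]

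The first move is to rewrite the eigenspace as a multiplicity space. The Peter--Weyl identification of \S\ref{4-2} gives
\[
\mathcal{R}^{\bm{\pi}}(G^E)\simeq (\mathbf{M}^{\bm{\pi}})^{*}\otimes\mathbf{M}^{\bm{\pi}}=:N
\]
as $G^E\times G^E$-modules, and a direct computation with matrix coefficients shows that the gauge action of $\mathbf{K}$ becomes the restriction along the embedding above. Hence
\[
\mathcal{H}^{\bm{\pi}}=(N\otimes S)^{\mathbf{K}}\simeq\textup{Hom}_{\mathbf{K}}(S^{*},N)=\textup{Hom}_{\mathfrak{k}}(S^{\alpha},N),
\]
the last equality using connectedness of $\mathbf{K}$ from (b), with $\alpha\in\mathfrak{k}^\wedge$ the class of $S^{*}$, which is irreducible by (c). Hypothesis (a) makes $G^E$ simply connected, so $\mathbf{M}^{\bm{\pi}}$ is an irreducible $\mathfrak{g}^E$-module and $N$ is an irreducible $\mathfrak{g}^E\times\mathfrak{g}^E$-module. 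Hypothesis (b) realises $\mathbf{K}$ as a closed connected Lie subgroup of the compact group $G^E\times G^E$, so \S\ref{3-5} gives that $\mathfrak{k}$ is reductive in $\mathfrak{g}^E\times\mathfrak{g}^E$; in particular the latter is a reductive extension of $\mathfrak{k}$ in the sense of \S\ref{3-10}--\S\ref{3-11}. Corollary \ref{3-17} then yields that $\textup{Hom}_{\mathfrak{k}}(S^{\alpha},N)$ is either zero or an irreducible $U(\mathfrak{g}^E\times\mathfrak{g}^E)^{\mathfrak{k}}$-module.

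The final step, and the step I expect to be the main obstacle, is to identify this $U(\mathfrak{g}^E\times\mathfrak{g}^E)^{\mathfrak{k}}$-action with the $J$-action on $\mathcal{H}^{\bm{\pi}}$. By the surjection \eqref{im} together with \eqref{kcd}, the contragredient $\mathbf{K}$-action on $\mathcal{D}_{\textup{inv}}(G^E)$ lifts to the adjoint $\mathbf{K}$-action on $U(\mathfrak{g}^E\times\mathfrak{g}^E)$ along the diagonal embedding; compactness of $\mathbf{K}$ makes the induced map $U(\mathfrak{g}^E\times\mathfrak{g}^E)^{\mathbf{K}}\twoheadrightarrow\mathcal{D}_{\textup{inv}}(G^E)^{\mathbf{K}}$ surjective, and connectedness of $\mathbf{K}$ rewrites its source as $U(\mathfrak{g}^E\times\mathfrak{g}^E)^{\mathfrak{k}}$. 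Restricting to the $\bm{\pi}$-isotypical matrix coefficients and then passing to the $\mathbf{K}$-invariant subspace realises $J\vert_{\mathcal{H}^{\bm{\pi}}}$ as the image of $U(\mathfrak{g}^E\times\mathfrak{g}^E)^{\mathfrak{k}}$ in $\textup{End}(\textup{Hom}_{\mathfrak{k}}(S^{\alpha},N))$, matching the $U(\mathfrak{g}^E\times\mathfrak{g}^E)^{\mathfrak{k}}$-action of \S\ref{3-15}, and irreducibility transfers. The delicate part is controlling the two quotients simultaneously, the $Z(\mathfrak{g}^E)$-balancing in \eqref{im} and the passage to $\mathbf{K}$-invariants, without losing surjectivity; I expect the intermediate Corollary \ref{6-9} flagged in the introduction, which recasts superintegrability as a local irreducibility condition at the stars of the vertices, is designed precisely to manage this bookkeeping.
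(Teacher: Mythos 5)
Your proposal is correct, but it takes a genuinely different route from the paper. You apply the Lepowsky--McCollum result once, globally, to the image $\mathfrak{k}$ of the complexified gauge Lie algebra under the source--target embedding $\bm{k}\mapsto((k_{s(e)})_{e\in E},(k_{t(e)})_{e\in E})$ into $\mathfrak{g}^E\times\mathfrak{g}^E$, whereas the paper first factorises $\mathcal{H}^{\bm{\pi}}$ into vertex-local multiplicity spaces via the $\mathbf{K}$-linear and $U((\mathfrak{g}^E)^{\times 2})$-linear isomorphism $\Upsilon^{\bm{\pi}}$ (Theorem \ref{4-15}, Lemma \ref{6-8}, Corollary \ref{6-9}) and only then applies Corollary \ref{3-17}, separately at each vertex star, to the diagonal copy $\mathfrak{k}_v^{(e(v))}\subseteq\mathfrak{g}^{\times e(v)}$. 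That local route is exactly where hypothesis (a) enters the paper: semisimplicity of $\mathfrak{g}$ is used through Propositions \ref{3-6} and \ref{3-9} to show the diagonal is reductive in the product, while your global route gets reductivity of $\mathfrak{k}$ in $\mathfrak{g}^E\times\mathfrak{g}^E$ directly from \S\ref{3-5} (the image of $\mathbf{K}$ is compact and connected), and irreducibility of $\mathbf{M}^{\bm{\pi}}$ over $\mathfrak{g}^E$ needs only connectedness of $G$; so (a) plays no essential role in your argument, which is legitimate for a sufficiency statement and makes your proof more economical. Two remarks on the step you flag as delicate: neither the $Z(\mathfrak{g}^E)$-balancing in \eqref{im} nor surjectivity of $U((\mathfrak{g}^E)^{\times 2})^{\mathbf{K}}\rightarrow\mathcal{D}_{\textup{inv}}(G^E)^{\mathbf{K}}$ is actually needed, because by \eqref{kcd} the image of $U((\mathfrak{g}^E)^{\times 2})^{\mathbf{K}}$ in $\textup{End}(\mathcal{H}^{\bm{\pi}})$ is contained in $J_{\bm{\chi}_{\bm{\pi}}}$, and irreducibility over a subalgebra already forces irreducibility over $J$; and the compatibility of the two module structures is just the matrix-coefficient identity $D_{\mathbf{X}\otimes\mathbf{Y}}\bigl(c^{\bm{\pi}}_{\bm{\phi},\bm{m}}\bigr)=c^{\bm{\pi}}_{\mathbf{X}\cdot\bm{\phi},\mathbf{Y}\cdot\bm{m}}$, i.e.\ the global analogue of Lemma \ref{6-8}, so Corollary \ref{6-9} is not needed for your bookkeeping. (You do use implicitly, as the paper also does, that every vertex meets an edge, so that the source--target map is injective and $S^*$ is an irreducible $\mathfrak{k}$-module.) What the paper's longer, vertex-local route buys is the factorisation of the eigenspaces into local intertwiner spaces and the local irreducibility criterion of Corollary \ref{6-9}(b), which tie in with the $n$-point trace and spherical functions of \S\ref{4-17}--\S\ref{4-18}; your approach is shorter if superintegrability alone is the goal.
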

We give the proof of the theorem in \S\ref{6-10}.

\subsection{}\label{5-12}
Consider the quantum spin system with $\Gamma$ the oriented cycle graph with $n$ edges, $\mathbf{K}=G^V$ and $\sigma: \mathbf{K}\rightarrow\textup{GL}(V)$ a finite dimensional
representation, see \S\ref{4-17}.
By Theorem \ref{5-11} it is superintegrable when $G$ is simply connected and $\sigma$ is irreducible. The condition on $\sigma$ implies that $\sigma$ is equivalent to a
tensor product representation $\bm{\sigma}$ with its local representations $\sigma_v: G\rightarrow\textup{GL}(S_v)$ irreducible for all $v\in V$.

This quantum spin system can be made more explicit using the parametrisation of its moduli space $\mathcal{M}$ of graph $G$-connections in terms of a maximal torus $T\subset G$. The edge-component quadratic Hamiltonians $H_i$ then become explicit second-order $\textup{End}(S)$-valued differential operator on $T$ of Calogero-Moser type. The differences $H_i-H_{i-1}$ are first-order commuting differential operators called asymptotic Knizhnik-Zamolodchikov operators, which can be entirely described in terms of Felder's classical trigonometric dynamical $r$-matrix (see \cite{ES,St,Re3}). This provides the interpretation of this quantum spin system as a quantum periodic spin Calogero-Moser chain
\cite{RS}. 

For the special case $n=1$, the superintegrability of the quantum periodic Calogero-Moser spin system
was discussed in \cite{Re2}.  

\subsection{}\label{5-13}
Consider now the quantum spin system with $\Gamma$ the linearly ordered linear graph with $n$ edges, local gauge groups of the form
\eqref{localalg} with $H,K\subseteq G$ closed connected subgroups, and $\sigma: \mathbf{K}\rightarrow\textup{GL}(S)$ a finite dimensional representation of the associated gauge group $\mathbf{K}$ (see \S\ref{4-18}). By Theorem \ref{5-11} this quantum spin system is superintegrable when $G$ is simply connected and $\sigma$ is irreducible. The condition on $\sigma$ implies that $S\simeq L\otimes S_2\otimes\cdots\otimes S_n\otimes N$ with $L$ an irreducible $H$-representation, $N$ an irreducible $K$ representation and $S_j$ irreducible $G$-representations.

This quantum spin system can be made more concrete when $H=K$ is the connected component of the identity of a fix-point subgroup $G^\Theta$ of an involution $\Theta$ of $G$, using an appropriate parametrisation of its moduli space $\mathcal{M}$ of graph $G$-connections in terms of an appropriate subtorus $A\subset G$. The edge-component quadratic Hamiltonians $H_i$ then become second-order $\textup{End}(S)$-valued differential operator on $A$ of Calogero-Moser type and $H_i-H_{i-1}$ are asymptotic boundary Knizhnik-Zamolodchikov operators, which are first order differential operators involving folded classical dynamical $r$-matrices and associated dynamical $k$-matrices (see \cite{SR,RS,St,Re3}). This provides the interpretation of this quantum spin system as a quantum open spin Calogero-Moser chain \cite{RS}. 

\section{Conditions for superintegrability}\label{6}
In this section we provide a proof of the sufficient conditions ensuring superintegrability of the quantum spin systems defined in \S\ref{5} (see Theorem \ref{5-11}). 
We retain the notations and conventions of \S\ref{5}. In particular, $\Gamma$ is an oriented finite graph, $G$ is a connected compact Lie group, and 
$\mathbf{K}=\prod_{v\in V}K_v$ with subgroups $K_v\subseteq G$. 

We take as finite dimensional $\mathbf{K}$-representation of the quantum system a tensor product representation $\bm{\sigma}: \mathbf{K}\rightarrow\textup{GL}(\mathbf{S})$ (see \S\ref{4-10}). 
We furthermore fix an irreducible finite dimensional tensor product representation $\bm{\pi}: G^E\rightarrow\textup{GL}(\mathbf{M}^{\bm{\pi}})$, with local irreducible $G$-representations $\pi_e: G\rightarrow\textup{GL}(M^{\pi_e})$.

Finally, we write $\mathfrak{g}_0$ for the Lie algebra of $G$, and $\mathfrak{g}$ for its complexification.
\subsection{}\label{6-1}
For $v\in V$ consider the linear isomorphism
\begin{equation}\label{tauv}
\tau_v: \textup{Hom}(S_v^*,\mathbf{M}^{\pi_{\mathcal{S}(v)}})\overset{\sim}{\longrightarrow}\mathbf{M}^{\pi_{\mathcal{S}(v)}}
\otimes S_v
\end{equation}
defined by 
\[
\tau_v(\phi_v):=\sum_{t_v}\phi_v(u_{t_v}^{(v),*})\otimes u_{t_v}^{(v)}
\] 
where $\{u_t^{(v)}\}_{t}$ is a basis of $S_v$ and $\{u_t^{(v),*}\}_{t}$ is the corresponding dual basis of $S_v^*$.

It is often convenient to expand $\phi_v(u_{t_v}^{(v),*})$ in terms of the tensor product basis of $\mathbf{M}^{\pi_{\mathcal{S}(v)}}$ (see \S\ref{4-16}). Its expansion coefficients
will be denoted by $\phi_v\lbrack t_v;\bm{i},\bm{j}\rbrack\in\mathbb{C}$, 
\[
\phi_v(u_{t_v}^{(v),*})=\sum_{\bm{i}\in\mathcal{I}(v|s)}\sum_{\bm{j}\in\mathcal{I}(v|t)}\phi_v\lbrack t_v;\bm{i},\bm{j}\rbrack\,\bigl(\bm{m}_{\bm{i}}(v|s)^*\otimes
\bm{m}_{\bm{j}}(v|t)\bigr),
\]
so that
\begin{equation}\label{tauv}
\tau_v(\phi_v)=\sum_{\bm{i}\in\mathcal{I}(v|s)}\sum_{\bm{j}\in\mathcal{I}(v|t)}\sum_{t_v}\phi_v\lbrack t_v;\bm{i},\bm{j}\rbrack\,\bigl(\bm{m}_{\bm{i}}(v|s)^*\otimes
\bm{m}_{\bm{j}}(v|t)\bigr)\otimes u_{t_v}^{(v)}.
\end{equation}
\subsection{}\label{6-2}
Turn $\textup{Hom}(S_v^*,\mathbf{M}^{\pi_{\mathcal{S}(v)}})$ into a $K_v$-representation, with action
\[
k_v\cdot T:=\pi_{\mathcal{S}(v)}(k_v)\circ T\circ 
\sigma_v^*(k_v^{-1}),\qquad\quad k_v\in K_v.
\]
The linear map $\tau_v$ (see \eqref{tauv}) is $K_v$-linear, with the $K_v$-action on the codomain of $\tau_v$ 
as defined in \S\ref{4-14}. Hence $\tau_v$ restricts to a linear isomorphism
\begin{equation*}
\tau_v: \textup{Hom}_{K_v}(S_v^*,\mathbf{M}^{\pi_{\mathcal{S}(v)}})\overset{\sim}{\longrightarrow}\bigl(\mathbf{M}^{\pi_{\mathcal{S}(v)}}
\otimes S_v\bigr)^{K_v}.
\end{equation*}

\subsection{}\label{6-3}

Recall the isomorphism $\Psi^{\bm{\pi}}$ defined in \S\ref{4-15}. It follows from \S\ref{6-2} that
\[
\Upsilon^{\bm{\pi}}:=\Psi^{\bm{\pi}}\circ\Bigl(\bigotimes_{v\in V}\tau_v\Bigr):\,\,
\bigotimes_{v\in V}\textup{Hom}(S_v^*,\mathbf{M}^{\pi_{\mathcal{S}(v)}})\overset{\sim}{\longrightarrow}
\mathcal{R}^{\bm{\pi}}(G^E)\otimes\mathbf{S}
\]
is a $\mathbf{K}$-linear isomorphism,
where the domain of $\Upsilon^{\bm{\pi}}$ is viewed as $\mathbf{K}$-representation relative to 
the tensor product action of $\mathbf{K}=\prod_{v\in V}K_v$.
The map $\Upsilon^{\bm{\pi}}$ restricts to a linear isomorphism
\[
\Upsilon^{\bm{\pi}}: \bigotimes_{v\in V}\textup{Hom}_{K_v}(S_v^*,\mathbf{M}^{\pi_{\mathcal{S}(v)}})\overset{\sim}{\longrightarrow}
\mathcal{H}^{\bm{\pi}}.
\]

\subsection{}\label{6-4}

Let $\mathcal{I}$ be the set of  sequences $(i_e)_{e\in E}$ with $i_e\in\mathcal{I}_{e}$. Consider the tensor product basis $\{\mathbf{m}_{\bm{i}}\}_{\bm{i}\in\mathcal{I}}$ of $\mathbf{M}^{\bm{\pi}}$, where
\[
\mathbf{m}_{\bm{i}}:=\bigotimes_{e\in E}m_{i_e,e},
\] 
and write $\mathbf{m}_{\bm{i}}^*:=\bigotimes_{e\in E}m_{i_e,e}^*$ for the corresponding dual basis elements of $(\mathbf{M}^{\bm{\pi}})^*\simeq
\otimes_{e\in E}M^{\pi_e^*}$, cf. \S\ref{4-9}.
For $\bm{i}\in\mathcal{I}$ write 
\[
\bm{i}_{v|s}:=(i_e)_{e\in\mathcal{S}(v|s)}\in\mathcal{I}(v|s),\qquad\quad
\bm{i}_{v|t}:=(i_e)_{e\in\mathcal{S}(v|t)}\in\mathcal{I}(v|t).
\]
A direct computation using \eqref{tauv} then leads to the formula
\begin{equation}\label{lineariso}
\Upsilon^{\bm{\pi}}\Bigl(\bigotimes_{v\in V}\phi_v\Bigr)=
\sum_{\bm{i},\bm{j}\in\mathcal{I}}c_{{}\bm{m}_{\bm{i}}^*,\bm{m}_{\bm{j}}}^{\bm{\pi}}\otimes
\Bigl(\bigotimes_{v\in V}\bigl(\sum_{t_v}\phi_v\lbrack t_v; \bm{i}_{v|s},\bm{j}_{v|t}\rbrack\,u_{t_v}^{(v)}\bigr)\Bigr)
\end{equation}
for $\phi_v\in\textup{Hom}(S_v^*,\mathbf{M}^{\pi_{\mathcal{S}(v)}})$.

\subsection{}\label{6-5}
Consider the tensor product algebra
\[
U(\mathfrak{g})^{(v)}:=U(\mathfrak{g})^{\otimes\#\mathcal{S}(v|s)}\otimes U(\mathfrak{g})^{\otimes\#\mathcal{S}(v|t)}.
\]
A pure tensor in $U(\mathfrak{g})^{(v)}$
is denoted by $\mathbf{X}_{v|s}\otimes\mathbf{Y}_{v|t}$ with
\[
\mathbf{X}_{v|s}=\bigotimes_{e\in\mathcal{S}(v|s)}X_e,\qquad \mathbf{Y}_{v|t}=\bigotimes_{e^\prime\in\mathcal{S}(v|t)}Y_{e^\prime},
\] 
where we order the tensor products along the total orders on $\mathcal{S}(v|s)$ and $\mathcal{S}(v|t)$ induced by the total order on $E$.
In \S\ref{4-13} we considered the space 
\[
\mathbf{M}^{\pi_{\mathcal{S}(v)}}=\Bigl(\bigotimes_{e\in\mathcal{S}(v|s)}M^{\pi_e^*}\Bigr)\otimes\Bigl(\bigotimes_{e^\prime\in\mathcal{S}(v|t)}M^{\pi_{e^\prime}}\Bigr),
\]
as $K_v$-representation space relative to the diagonal $K_v$-action $\pi_{\mathcal{S}(v)}$. Differentiating the $G$-action turns $M^{\pi_e^*}$ and $M^{\pi_{e^\prime}}$
into irreducible $U(\mathfrak{g})$-modules, and hence $\mathbf{M}^{\pi_{\mathcal{S}(v)}}$ into an irreducible $U(\mathfrak{g})^{(v)}$-module via the diagonal 
$U(\mathfrak{g})^{(v)}$-action.

We view the linear space
\[
\textup{Hom}(S_v^*,\mathbf{M}^{\pi_{\mathcal{S}(v)}})
\]
as $U(\mathfrak{g})^{(v)}$-module, with $U(\mathfrak{g})^{(v)}$ acting on its co-domain,
\begin{equation}\label{codomaction}
\bigl((\mathbf{X}_{v|s}\otimes\mathbf{Y}_{v|t})\cdot T\bigr)(\xi):=(\mathbf{X}_{v|s}\otimes\mathbf{Y}_{v|t})\cdot (T(\xi))
\end{equation}
for $\mathbf{X}_{v|s}\otimes\mathbf{Y}_{v|t}\in U(\mathfrak{g})^{(v)}$, $T\in \textup{Hom}(S_v^*,\mathbf{M}^{\pi_{\mathcal{S}(v)}})$ and $\xi\in S_v^*$.

\subsection{}\label{6-6}
The local gauge group $K_v$ acts by algebra automorphisms on $U(\mathfrak{g})^{(v)}$ via the diagonal adjoint action,
\[
k_v\bullet_v\bigl(\mathbf{X}_{v|s}\otimes\mathbf{Y}_{v|t}\bigr):=
\Bigl(\bigotimes_{e\in\mathcal{S}(v|s)}\textup{Ad}(k_v)X_e\Bigr)\otimes\Bigl(\bigotimes_{e^\prime\in\mathcal{S}(v|t)}\textup{Ad}(k_v)Y_{e^\prime}\Bigr).
\]
We then have
\[
k_v\cdot\bigl((\mathbf{X}_{v|s}\otimes\mathbf{Y}_{v|t})\cdot B\bigr)=\bigl(k_v\bullet_v(\mathbf{X}_{v|s}\otimes\mathbf{Y}_{v|t})\bigr)\cdot\bigl(k_v\cdot B\bigr)
\]
for $k_v\in K_v$, $\mathbf{X}_{v|s}\otimes\mathbf{Y}_{v|t}\in U(\mathfrak{g})^{(v)}$ and
$B\in\mathbf{M}^{\pi_{\mathcal{S}(v)}}$. 

Let $(U(\mathfrak{g})^{(v)})^{K_v}$ be the algebra
of $K_v$-invariant elements in $U(\mathfrak{g})^{(v)}$ relative to the $K_v$-action $\bullet_v$.
It follows from \S\ref{6-2} and \S\ref{6-5} that the space 
\[
\textup{Hom}_{K_v}(S_v^*,\mathbf{M}^{\pi_{\mathcal{S}(v)}})
\]
of $K_v$-intertwiners is a $(U(\mathfrak{g})^{(v)})^{K_v}$-module, with the action 
on $\textup{Hom}_{K_v}(S_v^*,\mathbf{M}^{\pi_{\mathcal{S}(v)}})
$ given by \eqref{codomaction}.

\subsection{}\label{6-7}
Consider the algebra isomorphism
\begin{equation}\label{algiso}
U((\mathfrak{g}^E)^{\times 2})\overset{\sim}{\longrightarrow}\bigotimes_{v\in V}U(\mathfrak{g})^{(v)}
\end{equation}
defined by 
\[
\mathbf{X}\otimes\mathbf{Y}
\mapsto \bigotimes_{v\in V}\bigl(\mathbf{X}_{v|s}\otimes\mathbf{Y}_{v|t}\bigr)
\]
for $\mathbf{X}=\bigotimes_{e\in E}X_e$ and $\mathbf{Y}=\bigotimes_{e^\prime\in E}Y_{e^\prime}$, where
\[
\mathbf{X}_{v|s}=\bigotimes_{e\in\mathcal{S}(v|s)}X_e,\qquad\qquad
\mathbf{Y}_{v|t}=\bigotimes_{e^\prime\in\mathcal{S}(v|t)}Y_{e^\prime}.
\]

Consider the tensor product action of the gauge group $\mathbf{K}=\prod_{v\in V}K_v$ on the co-domain 
$\bigotimes_{v\in V}U(\mathfrak{g})^{(v)}$
of the algebra isomorphism \eqref{algiso},
with the $K_v$-action on
$U(\mathfrak{g})^{(v)}$ as defined in \S\ref{6-6}. A direct check shows that the algebra isomorphism \eqref{algiso} is $\mathbf{K}$-linear, with
$\mathbf{K}$ acting on $U((\mathfrak{g}^E)^{\times 2})$ according to Lemma \ref{5-5}. The algebra isomorphism \eqref{algiso} thus restricts to an algebra isomorphism
\begin{equation}\label{algisoK}
U((\mathfrak{g}^E)^{\times 2})^{\mathbf{K}}\overset{\sim}{\longrightarrow}\bigotimes_{v\in V}(U(\mathfrak{g})^{(v)})^{K_v}.
\end{equation}

\subsection{}\label{6-8}
Endow
\begin{equation}\label{vHom}
\bigotimes_{v\in V}\textup{Hom}(S_v^*,\mathbf{M}^{\pi_{\mathcal{S}(v)}})
\end{equation}
with the tensor product action of $\bigotimes_{v\in V}U(\mathfrak{g})^{(v)}$. We reinterpret this as  
an action of $U((\mathfrak{g}^E)^{\times 2})$ via the algebra isomorphism \eqref{algiso}.

By \S\ref{5-2} and \S\ref{5-7} the universal enveloping algebra $U((\mathfrak{g}^E)^{\times 2})$ also acts on the space $\mathcal{R}^{\bm{\pi}}(G^E)\otimes\mathbf{S}$ of $\mathbf{S}$-valued representative functions on $G^E$ by
\[
(\mathbf{X}\otimes\mathbf{Y})\cdot(f\otimes u):=D_{\mathbf{X}\otimes\mathbf{Y}}(f)\otimes u
\]
for $\mathbf{X}, \mathbf{Y}\in U(\mathfrak{g}^E)$, $f\in\mathcal{R}^{\bm{\pi}}(G^E)$ and $u\in\mathbf{S}$.
\begin{lem}
The $\mathbf{K}$-linear isomorphism
\[
\Upsilon^{\bm{\pi}}: \bigotimes_{v\in V}\textup{Hom}(S_v^*,\mathbf{M}^{\pi_{\mathcal{S}(v)}})\overset{\sim}{\longrightarrow}
\mathcal{R}^{\bm{\pi}}(G^E)\otimes\mathbf{S}
\]
as defined in \S\ref{6-3}, is $U((\mathfrak{g}^E)^{\times 2})$-linear.
\end{lem}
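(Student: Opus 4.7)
The plan is to verify the intertwining identity
\[
\Upsilon^{\bm{\pi}}(X\cdot w)=D_X\cdot\Upsilon^{\bm{\pi}}(w)
\]
on a set of algebra generators of $U((\mathfrak{g}^E)^{\times 2})$, since both sides depend algebraically on $X$. Under the identification $U((\mathfrak{g}^E)^{\times 2})\simeq U(\mathfrak{g}^E)\otimes U(\mathfrak{g}^E)$, the algebra is generated by the elements $\{x^{(i)}\otimes 1,\, 1\otimes y^{(i)} : x,y\in\mathfrak{g},\, i=1,\ldots,n\}$, so it suffices to check the identity for these. By linearity I may further assume that $w=\bigotimes_{v\in V}\phi_v$ has each $\tau_v(\phi_v)$ being a pure tensor of the form appearing in \eqref{Psipure}; for such $w$, combining $\Upsilon^{\bm{\pi}}=\Psi^{\bm{\pi}}\circ\bigotimes_v\tau_v$ with \eqref{Psipure} yields $\Upsilon^{\bm{\pi}}(w)=c^{\bm{\pi}}_{\bm{\phi},\bm{m}}\otimes\bm{u}$, where $\bm{\phi}$, $\bm{m}$ and $\bm{u}$ are assembled from the local components using the disjoint unions $\bigsqcup_v\mathcal{S}(v|s)=E=\bigsqcup_v\mathcal{S}(v|t)$.

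For the generator $X=x^{(i)}\otimes 1$, the algebra isomorphism \eqref{algiso} places $x$ in the slot of $U(\mathfrak{g})^{(v)}$ indexed by $e_i$ at the unique vertex $v=s(e_i)$ (and the identity elsewhere), and via $\tau_v$ this acts on the codomain $\mathbf{M}^{\pi_{\mathcal{S}(v)}}$ by $\pi_{e_i}^*(x)$ in the $M^{\pi_{e_i}^*}$-factor. Consequently $X\cdot w$ is again a pure tensor of the same type, and its image under $\Upsilon^{\bm{\pi}}$ equals $c^{\bm{\pi}}_{\widetilde{\bm{\phi}},\bm{m}}\otimes\bm{u}$, where $\widetilde{\bm{\phi}}$ differs from $\bm{\phi}$ only in the $e_i$-component, which is replaced by $\pi_{e_i}^*(x)\phi_{e_i}$. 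On the other hand, differentiating the product $c^{\bm{\pi}}_{\bm{\phi},\bm{m}}(\bm{g})=\prod_{e\in E}\phi_e(\pi_e(g_e)m_e)$ using the defining formula of \S\ref{5-2}, together with the identity $(\pi_{e_i}^*(x)\phi_{e_i})(m)=-\phi_{e_i}(\pi_{e_i}(x)m)$, gives $D_{x^{(i)}\otimes 1}c^{\bm{\pi}}_{\bm{\phi},\bm{m}}=c^{\bm{\pi}}_{\widetilde{\bm{\phi}},\bm{m}}$, which matches the previous expression. The symmetric argument for $X=1\otimes y^{(i)}$ places $y$ in the slot of $U(\mathfrak{g})^{(t(e_i))}$ indexed by $e_i$, acting on $M^{\pi_{e_i}}$ by $\pi_{e_i}(y)$, and matches $D_{1\otimes y^{(i)}}c^{\bm{\pi}}_{\bm{\phi},\bm{m}}=c^{\bm{\pi}}_{\bm{\phi},\widetilde{\bm{m}}}$, where $\widetilde{\bm{m}}$ replaces $m_{e_i}$ by $\pi_{e_i}(y)m_{e_i}$.

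The proof is essentially bookkeeping, and the main subtlety arises for loops: if $s(e_i)=t(e_i)=v$, then $e_i\in\mathcal{S}(v|s)\cap\mathcal{S}(v|t)$, and one must check that $x^{(i)}\otimes 1$ acts only on the ``source copy'' of the $e_i$-index at $v$ (matching left-regular differentiation in $g_{e_i}$), while $1\otimes y^{(i)}$ acts only on the ``target copy'' (matching right-regular differentiation). Both assertions are immediate from the definition of \eqref{algiso} once the factors $\mathbf{X}_{v|s}$ and $\mathbf{Y}_{v|t}$ are traced through. Once the intertwining identity is established on the generators, it extends to all of $U((\mathfrak{g}^E)^{\times 2})$ by a one-line induction on word length, since both actions are algebra actions and $\Upsilon^{\bm{\pi}}$ is linear.
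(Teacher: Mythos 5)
Your proof is correct, and it takes a mildly but genuinely different route from the paper's. The paper verifies the intertwining property for an arbitrary pure tensor $\mathbf{X}\otimes\mathbf{Y}\in U(\mathfrak{g}^E)\otimes U(\mathfrak{g}^E)$ in one pass: it expands $\Upsilon^{\bm{\pi}}$ in bases via the explicit coordinate formula \eqref{lineariso}, pushes the action of $\mathbf{X}_{v|s}\otimes\mathbf{Y}_{v|t}$ through the expansion coefficients $\phi_v\lbrack t_v;\bm{i},\bm{j}\rbrack$, resums over the bases, and concludes with the identity $c^{\bm{\pi}}_{\mathbf{X}\cdot\mathbf{m}_{\bm{i}}^*,\mathbf{Y}\cdot\mathbf{m}_{\bm{j}}}=D_{\mathbf{X}\otimes\mathbf{Y}}(c^{\bm{\pi}}_{\mathbf{m}_{\bm{i}}^*,\mathbf{m}_{\bm{j}}})$. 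You instead reduce to the Lie-algebra generators $x^{(i)}\otimes 1$ and $1\otimes y^{(i)}$ and to inputs whose local components are pure tensors as in \eqref{Psipure}; there the check is a one-line differentiation of matrix coefficients (with the correct sign convention $(\pi_{e_i}^*(x)\phi)(m)=-\phi(\pi_{e_i}(x)m)$ matching the left-regular derivative from \S\ref{5-2}), after which multiplicativity of both actions finishes the argument. Your reductions are all legitimate: the special $w$'s span the domain because $\bigotimes_v\tau_v$ is a linear isomorphism, and $U((\mathfrak{g}^E)^{\times 2})$ is indeed generated by the listed degree-one elements. What your route buys is the avoidance of the double basis expansion and coefficient bookkeeping, together with an explicit treatment of the loop case $s(e_i)=t(e_i)$, which the paper handles only implicitly through the bookkeeping of $\mathbf{X}_{v|s}$ versus $\mathbf{Y}_{v|t}$; what the paper's route buys is the stronger statement for arbitrary $\mathbf{X}\otimes\mathbf{Y}$ without appealing to generation, and the expansion formula \eqref{lineariso} it runs on is set up anyway and reused in the surrounding sections.
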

\begin{proof}
Using the notations from \S\ref{6-7} we have
\begin{equation*}
\begin{split}
\Upsilon^{\bm{\pi}}\Bigl((\mathbf{X}\otimes\mathbf{Y})\cdot\Bigl(\bigotimes_{v\in V}\phi_v\Bigr)\Bigr)&=\Upsilon^{\bm{\pi}}\Bigl(\bigotimes_{v\in V}(\mathbf{X}_{v|s}\otimes\mathbf{X}_{v|t})\cdot
\phi_v\Bigr)\\
&=\sum_{\bm{i},\bm{j}\in\mathcal{I}}c_{\mathbf{X}\cdot\mathbf{m}_{\bm{i}}^*,\mathbf{Y}\cdot\mathbf{m}_{\bm{j}}}^{\bm{\pi}}\otimes
\Bigl(\bigotimes_{v\in V}\bigl(\sum_{t_v}\phi_v\lbrack t_v;\bm{i}_{v|s},\bm{j}_{v|t}\rbrack\, u_{t_v}^{(v)}\bigr)\Bigr)
\end{split}
\end{equation*}
(here $\mathbf{X}\cdot\mathbf{m}_{\bm{i}}^*$ and $\mathbf{Y}\cdot\mathbf{m}_{\bm{j}}$ refer to the $U(\mathfrak{g}^E)$-action 
on $(\mathbf{M}^{\bm{\pi}})^*$ and $\mathbf{M}^{\bm{\pi}}$, obtained by differentiating the $G^E$-action).
The second equality follows from the expansion formula
\begin{equation*}
\begin{split}
&\bigl((\mathbf{X}_{v|s}\otimes\mathbf{Y}_{v|t})\cdot\phi_v\bigr)\lbrack t_v;\bm{i}_{v|s}^\prime,\bm{j}_{v|t}^\prime\rbrack=\\
&\,\,=\sum_{\bm{i}_{v|s},\bm{j}_{v|t}}\bigl(\mathbf{X}_{v|s}\cdot\mathbf{m}_{\bm{i}_{v|s}}(v|s)^*\bigr)
(\mathbf{m}_{\bm{i}_{v|s}^\prime}(v|s))\mathbf{m}_{\bm{j}_{v|t}^\prime}(v|t)^*(\mathbf{Y}_{v|t}\cdot\mathbf{m}_{\bm{j}_{v|t}}(v|t))\phi_v\lbrack t_v;\bm{i}_{v|s},\bm{j}_{v|t}\rbrack
\end{split}
\end{equation*}
with the sums in the left hand side taken over $\bm{i}_{v|s}\in\mathcal{I}(v|s)$ and $\bm{j}_{v|t}\in\mathcal{I}(v|t)$, and the fact that
\begin{equation*}
\begin{split}
\sum_{\bm{i}^\prime\in\mathcal{I}}\Bigl(\prod_{v\in V}\bigl(\mathbf{X}_{v|s}\cdot\mathbf{m}_{\bm{i}_{v|s}}(v|s)^*\bigr)(\mathbf{m}_{\bm{i}_{v|s}^\prime}(v|s))\Bigr)\mathbf{m}_{\bm{i}^\prime}^*&=\sum_{\bm{i}^\prime\in\mathcal{I}}(\mathbf{X}\cdot\mathbf{m}_{\bm{i}}^*)(\mathbf{m}_{\bm{i}^\prime})\mathbf{m}_{\bm{i}^\prime}^*=
\mathbf{X}\cdot\mathbf{m}_{\bm{i}}^*,\\
\sum_{\bm{j}^\prime\in\mathcal{I}}\Bigl(\prod_{v\in V}\mathbf{m}_{\bm{j}_{v|t}^\prime}(v|t)^*\bigl(\mathbf{Y}_{v|t}\cdot\mathbf{m}_{\bm{j}_{v|t}}(v|t)\bigr)\Bigr)\mathbf{m}_{\bm{j}^\prime}&=\sum_{\bm{j}^\prime\in\mathcal{I}}\mathbf{m}_{\bm{j}^\prime}^*(\mathbf{Y}\cdot\mathbf{m}_{\bm{j}})\mathbf{m}_{\bm{j}^\prime}=\mathbf{Y}\cdot \mathbf{m}_{\bm{j}}.
\end{split}
\end{equation*}
The result now follows from the fact that
\[
c_{\mathbf{X}\cdot\mathbf{m}_{\bm{i}}^*,\mathbf{Y}\cdot\mathbf{m}_{\bm{j}}}^{\bm{\pi}}=D_{\mathbf{X}\otimes\mathbf{Y}}(c_{\mathbf{m}_{\bm{i}}^*,\mathbf{m}_{\bm{j}}}^{\bm{\pi}}).
\]
\end{proof}
\subsection{}\label{6-9}
The results of \S\ref{6-6}-\S\ref{6-8} immediately lead to the following conclusion.
\begin{cor}
\hfill
\begin{enumerate}
\item[{\bf (a)}]
The isomorphism $\Upsilon^{\bm{\pi}}$ restricts to a $U((\mathfrak{g}^{E})^{\times 2})^{\mathbf{K}}$-linear isomorphism
\[
\Upsilon^{\bm{\pi}}:\,\, \bigotimes_{v\in V}\textup{Hom}_{K_v}\bigl(S_v^*,\mathbf{M}^{\pi_{\mathcal{S}(v)}}\bigr)\overset{\sim}{\longrightarrow}
\mathcal{H}^{\bm{\pi}}.
\]
\item[{\bf (b)}] $\mathcal{H}^{\bm{\pi}}$ is an irreducible $U((\mathfrak{g}^{E})^{\times 2})^{\mathbf{K}}$-module iff
$\textup{Hom}_{K_v}(S_v^*,\mathbf{M}^{\pi_{\mathcal{S}(v)}})$ is an irreducible $(U(\mathfrak{g})^{(v)})^{K_v}$-module
for all $v\in V$.
\end{enumerate}
\end{cor}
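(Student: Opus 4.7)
The plan is to deduce both statements directly by combining the three preceding items: the $\mathbf{K}$-equivariant isomorphism $\Upsilon^{\bm{\pi}}$ of \S\ref{6-3}, the $U((\mathfrak{g}^E)^{\times 2})$-linearity established in Lemma \ref{6-8}, and the factorisation of the invariant algebra in \S\ref{6-7}. Part (a) is essentially a bookkeeping step, while part (b) reduces to the standard finite-dimensional tensor product irreducibility result over the algebraically closed field $\mathbb{C}$.

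For part (a), I would start with the $\mathbf{K}$-linear isomorphism
\[
\Upsilon^{\bm{\pi}}:\,\bigotimes_{v\in V}\textup{Hom}(S_v^*,\mathbf{M}^{\pi_{\mathcal{S}(v)}})\overset{\sim}{\longrightarrow}\mathcal{R}^{\bm{\pi}}(G^E)\otimes\mathbf{S},
\]
which by \S\ref{6-3} restricts on $\mathbf{K}$-invariants to an isomorphism onto $\mathcal{H}^{\bm{\pi}}$. By Lemma \ref{6-8}, $\Upsilon^{\bm{\pi}}$ is moreover $U((\mathfrak{g}^E)^{\times 2})$-linear. Restriction of scalars to the subalgebra $U((\mathfrak{g}^E)^{\times 2})^{\mathbf{K}}$ is then legitimate once one checks that both the domain subspace $\bigotimes_{v\in V}\textup{Hom}_{K_v}(S_v^*,\mathbf{M}^{\pi_{\mathcal{S}(v)}})$ and the codomain subspace $\mathcal{H}^{\bm{\pi}}$ are stable under it. For the codomain this is immediate: a $\mathbf{K}$-invariant differential operator preserves $\mathbf{K}$-invariant sections. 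For the domain, the algebra isomorphism \eqref{algisoK} of \S\ref{6-7} identifies $U((\mathfrak{g}^E)^{\times 2})^{\mathbf{K}}$ with $\bigotimes_{v\in V}(U(\mathfrak{g})^{(v)})^{K_v}$, and by \S\ref{6-6} each factor $(U(\mathfrak{g})^{(v)})^{K_v}$ preserves $\textup{Hom}_{K_v}(S_v^*,\mathbf{M}^{\pi_{\mathcal{S}(v)}})$.

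For part (b), the point is that, via the isomorphism \eqref{algisoK}, the $U((\mathfrak{g}^E)^{\times 2})^{\mathbf{K}}$-module structure on $\bigotimes_{v\in V}\textup{Hom}_{K_v}(S_v^*,\mathbf{M}^{\pi_{\mathcal{S}(v)}})$ is precisely the external tensor product of the $(U(\mathfrak{g})^{(v)})^{K_v}$-module structures on the factors. All vector spaces in sight are finite-dimensional (the $M^{\pi_e}$ and $S_v$ are), so the $\textup{Hom}_{K_v}$-factors are finite-dimensional as well. The result then follows from the standard fact that over $\mathbb{C}$ a finite tensor product of finite-dimensional modules over a finite tensor product of algebras is irreducible if and only if each tensor factor is an irreducible module (a Jacobson density / Burnside argument).

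The main obstacle I anticipate is really only the bookkeeping surrounding part (b): one must check that the factorisation in \S\ref{6-7} is compatible with the way $\mathbf{K}$-invariants sit inside each factor, so that \eqref{algisoK} actually identifies the invariant algebra with the tensor product of the local invariant algebras (and not merely embeds it). Once this is in hand, the density theorem argument for tensor product irreducibility is routine, and the \emph{if and only if} form of the statement is symmetric: the ``if'' direction uses Burnside to see each factor acts densely, hence so does the tensor product; the ``only if'' direction follows by restricting to a proper invariant subspace built from any proper invariant subspace in a single tensor slot.
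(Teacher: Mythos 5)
Your argument is correct and follows the same route the paper intends: the paper derives the corollary as an immediate consequence of \S\ref{6-6}--\S\ref{6-8}, and your proof simply makes the two implicit steps explicit, namely that \eqref{algisoK} identifies the invariant algebra with $\bigotimes_{v\in V}(U(\mathfrak{g})^{(v)})^{K_v}$ acting factorwise, and that a finite tensor product of finite-dimensional modules over a tensor product of $\mathbb{C}$-algebras is irreducible iff each factor is (Schur plus Jacobson density for the ``if'' direction, a slotwise proper submodule for the ``only if''). No gaps; the bookkeeping concern you raise about the invariants factorising is exactly what \S\ref{6-7} settles.
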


\subsection{}\label{6-10}
We have now all in the required ingredients for the proof of Theorem \ref{5-11}. 
\begin{proof}[Proof of Theorem \ref{5-11}]
Suppose that the three conditions (a)-(c) in Theorem \ref{5-11} hold true. In view of \S\ref{4-10}, we may assume 
without loss of generality that the $\mathbf{K}$-representation $\sigma$ is 
a tensor product representation $\bm{\sigma}$ with irreducible local representations $\sigma_v: K_v\rightarrow\textup{GL}(S_v)$. Let $\bm{\pi}\in (G^\wedge)^{E}$
such that $\mathcal{H}^{\bm{\pi}}\not=0$. We need to show that $\mathcal{H}^{\bm{\pi}}$ is an irreducible $U((\mathfrak{g}^{E})^{\times 2})^{\mathbf{K}}$-module.

Denote by $\mathfrak{k}_v$ the complexified Lie algebra of $K_v$. Set
\[
e(v):=\#\mathcal{S}(v|s)+\#\mathcal{S}(v|t)
\]
(which might be strictly larger than $\#\mathcal{S}(v)$ since $\Gamma$ may have loops).
Note that $e(v)>0$ since $\Gamma$ is connected. 

By Corollary \ref{6-9} and the fact that $K_v$ is connected, it suffices to show that 
$\textup{Hom}_{\mathfrak{k}_v}(S_v^*,\mathbf{M}^{\pi_{\mathcal{S}(v)}})$ is an irreducible $U(\mathfrak{g}^{\times e(v)})^{\mathfrak{k}_v^{(e(v))}}$-module, 
where $\mathfrak{k}_v^{(e(v))}\subseteq\mathfrak{g}^{\times e(v)}$ is the image $\mathfrak{k}_v$
under the diagonal embedding $\delta_{\mathfrak{g}}^{(e(v))}: \mathfrak{g}\hookrightarrow\mathfrak{g}^{\times e(v)}$, see \S\ref{3-9}.

Note that $\mathfrak{g}$ is semisimple since $G$ is simply connected, and $\mathfrak{k}_v$ is reductive in $\mathfrak{g}$ by \S\ref{3-5}.
Hence $\mathfrak{g}^{\times e(v)}$ is a reduction extension of $\mathfrak{k}_v^{(e(v))}$, see Proposition \ref{3-9}. Corollary \ref{3-17}
then implies that $\textup{Hom}_{\mathfrak{k}_v}(S_v^*,\mathbf{M}^{\pi_{\mathcal{S}(v)}})$ is an irreducible $U(\mathfrak{g}^{\times e(v)})^{\mathfrak{k}_v^{(e(v))}}$-module.
\end{proof}


\end{document}